\newtheorem{theorem}{Theorem}[section]
\newtheorem{proposition}{Proposition}[section]
\newtheorem{corollary}{Corollary}[theorem] 
\newtheorem{lemma}[theorem]{Lemma}
\theoremstyle{definition}
\newtheorem{definition}{\textsl{Definition}}
\theoremstyle{remark}
\newtheorem{remark}{Remark}
\newtheorem*{conjecture}{Conjecture}
\newtheorem*{rep@theorem}{\rep@title}
\newcommand{\newreptheorem}[2]{%
\newenvironment{rep#1}[1]{%
 \def\rep@title{#2 \ref{##1}}%
 \begin{rep@theorem}}%
 {\end{rep@theorem}}}
\begin{document}

\begin{frontmatter}
    \author{Gabriele Cassese\fnref{fn1}}
\affiliation{
    organization={Mathematical Institute, University of Oxford},
    addressline={Andrew Wiles Building},
    city={Oxford},
    postcode={OX2 6GG},
    country={United Kingdom}
    }
\fntext[fn1]{gabriele.cassese@maths.ox.ac.uk}

\date{01/12/2025}

\title{Martingales, laminates and minimal Korn inequalities}

\begin{abstract}
Korn's inequalities show that the $L^2$-norm of $\nabla u$
can be controlled by the $L^2$-norm of $\mathrm{Sym}(\nabla u)$, which only has $d(d+1)/2$ components. In [J. Math. Pures Appl. 148 (2021), pp. 199-220] Chipot
posed the question of \textit{how many scalar measurements are needed to have a 
Korn-type control on $\nabla u$} when $u$ is in $H_0^1(\Omega)$ and $H^1(\Omega)$, introducing the minimal numbers $N(d,\Omega)$ and $N'(d,\Omega)$
respectively. He proved general bounds and calculated several low-dimensional values of $N,N'$.

We reframe Chipot's problem in the language of rank-one convexity and quasiconvexity and obtain a purely algebraic characterisation of 
when such inequalities hold, which yields the sharp bounds
\begin{align*}
    N(d,\Omega)&=2d(1-o(1))\\ N'(d,\Omega)&=2d-1.
\end{align*} 
As a consequence, we recover and streamline several of Chipot's results,
we obtain a dimension-optimal Korn inequality and 
several sharp estimates for the best constant for various Korn-type inequalities. Generalisations to the rectangular case
and to general $L^p$ estimates are also considered.\par
The central new ingredient of our approach is a systematic connection
between laminates and martingales which produces explicit
families of laminates realising these bounds. This method is of 
independent interest in the calculus of variations: for instance, we use it to obtain a new quick and quantitative proof of Ornstein's
non-inequality, valid for all first order homogeneous operators in $\mathbb{R}^{2\times 2}$ and for a large
class of operators in general dimensions (including Korn's $\frac{\nabla u+\nabla u^t}2$ and $\frac{\nabla u+\nabla u^t}2-\mathrm{div}(u)\frac{\mathrm{Id}}d$).\par
\medskip
\noindent\textbf{Résumé}\par\medskip
Les inégalités de Korn montrent que la norme $L^2$ de $\nabla u$ peut être contrôlée par la norme $L^2$ de $\mathrm{Sym}(\nabla u)$, qui ne possède que $d(d+1)/2$ composantes. Dans [J. Math. Pures Appl. 148 (2021), p. 199–220], Chipot a posé la question de savoir combien de mesures scalaires sont nécessaires pour obtenir un contrôle de type Korn sur $\nabla u$ lorsque $u$ appartient à $H_0^1(\Omega)$ et à $H^1(\Omega)$, introduisant respectivement les nombres minimaux $N(d,\Omega)$ et $N'(d,\Omega)$. Il a démontré des bornes générales et calculé plusieurs valeurs de basse dimension de $N$ et $N'$.

Nous reformulons le problème de Chipot dans le langage de la convexité de rang un et de la quasiconvexité, et obtenons une caractérisation purement algébrique des cas où de telles inégalités sont valables, ce qui donne les bornes optimales
\begin{align*}
N(d,\Omega)&=2d(1-o(1))\\
N'(d,\Omega)&=2d-1.
\end{align*}
En conséquence, nous retrouvons et simplifions plusieurs résultats de Chipot, obtenons une inégalité de Korn optimale en dimension ainsi que plusieurs estimations optimales de la meilleure constante pour diverses inégalités de type Korn. Des généralisations au cas rectangulaire et à des estimations générales dans $L^p$ sont également considérées.

L’ingrédient central nouveau de notre approche est un lien systématique entre laminés et martingales, qui produit des familles explicites de laminés réalisant ces bornes. Cette méthode présente un intérêt indépendant dans le calcul des variations : par exemple, nous l’utilisons pour obtenir une nouvelle preuve rapide et quantitative de la non-inégalité d’Ornstein, valable pour tous les opérateurs homogènes du premier ordre dans $\mathbb{R}^{2\times 2}$ et pour une large classe d’opérateurs en dimensions générales, incluant les opérateurs de Korn $\frac{\nabla u+\nabla u^t}2$ et $\frac{\nabla u+\nabla u^t}2-\mathrm{div}(u)\frac{\mathrm{Id}}d$.
\end{abstract}
\begin{keyword}
Calculus of Variations \sep Korn Inequality \sep
Laminates

\MSC[2020] 35A23 \sep 49J45 \sep 26D10
\end{keyword}
\end{frontmatter}

\section{Introduction}
Korn's first inequality, in its arguably most well known 
form, is the statement that for all
$u\in C^\infty_c(\mathbb{R}^d, \mathbb{R}^d),$ and all $p\in(1,\infty)$ we have 
\begin{equation*}
    \|\nabla u\|_{L^p(\mathbb{R}^d)}
\lesssim_{p,d}
    \|\mathrm{Sym}(\nabla u)\|_{L^p(\mathbb{R}^d)},
\end{equation*}
where $\mathrm{Sym}(A):=\frac{A+A^t}2$ is the symmetric 
part of $A$ and the norm considered
is the Frobenius norm for matrices, i.e. 
$\|A\|^2=\mathrm{Tr}(AA^t)$.
Korn's second inequality is more general: for $p\in (1,\infty)$ given a 
(Lipschitz) domain $\Omega$ in $\mathbb{R}^d$, for all 
$u\in W^{1,p}(\Omega, \mathbb{R}^d)$ we have
\begin{equation*}
    \|u\|_{W^{1,p}(\Omega)}
\lesssim_{p,\Omega} 
    \|u\|_{L^p(\Omega)}
+
    \|\mathrm{Sym}(\nabla u)\|_{L^p(\Omega)}.
\end{equation*}
These two inequalities have many applications
(see \cite{PoinKorn} and references therein), but these are 
not the inequalities in their strongest form.

For example, if we take 
$\mathrm{Sym}_0(A):=
\mathrm{Sym}\left(A-\mathrm{Tr}(A)\frac{\mathrm{Id}}d\right)$
(i.e., the projection of $A$ onto its trace-free symmetric 
part), we still have\footnote{Note that the second form
of this inequality does not hold for $d=2$}
\begin{equation*}
    \|\nabla u\|_{L^p(\Omega)}
\lesssim_{p,\Omega}
    \|\mathrm{Sym}_0(\nabla u)\|_{L^p(\Omega)}.
\end{equation*}
Chipot analysed this problem 
more precisely in \cite{Chipot2021OnType}: 
namely, he asked what is the smallest 
number of coordinates
that one needs on the right-hand side for an inequality
such as Korn's to hold (at least for $p=2$)?
More formally, Chipot studied the quantity $N(d),$ defined as
\begin{definition}
    Let $N(d)$ be the smallest integer $k$ for which
    there exist $\ell_1,\dots, \ell_k$ linear functionals on the space of $d\times d$ matrices $M_d(\mathbb{R})$
    such that
    \begin{equation}
    \label{eq : Ndef}
        \|\nabla u\|_{L^2(\mathbb{R}^d)}^2
    \lesssim
        \sum_{i=1}^k \|\ell_i(\nabla u)\|_{L^2(\mathbb{R}^d)}^2
    \end{equation}
    holds for all  $u\in C^\infty_c(\mathbb R^d,\mathbb{R}^d)$
    \footnote{In \cite{Chipot2021OnType}, $N$ is defined 
    for $u\in C^\infty_c(\Omega, \mathbb{R}^d)$, so a priori 
    $N$ depends on the domain. 
    However, one easily sees that $N$
    is independent of the domain, so we skip that here.}.
\end{definition}
Similarly, he defined $N'(d)$ as
\begin{definition}
    Let $\Omega$ be an open subset of $\mathbb{R}^d$. Let $N'(d, \Omega)$ be the smallest integer $k$ for which
    there exist $\ell_1,\dots, \ell_k$ linear functionals on $M_d(\mathbb{R})$
    such that
    \begin{equation}
    \label{eq : N1def}
        \| u\|_{H^1(\Omega)}^2
    \lesssim
        \|u\|_{L^2(\Omega)}^2
    +
        \sum_{i=1}^k \|\ell_i(\nabla u)\|_{L^2(\Omega)}^2
    \end{equation}
    holds for all $u\in C^\infty(\Omega,\mathbb{R}^d)$
    \footnote
    {
        More care is required
        in handling $\Omega$ in this case. Indeed, the 
        regularity of $\Omega$ seems to play a more 
        important role, as it is well known
        that there exist open sets 
        $\Omega$ on which Korn's second
        inequality fails
        (see \cite{JohnKorn}).
    }.
\end{definition}
As we will see, $N'(d,\Omega)$ does not depend on the domain provided the domain is regular enough (say, Lipschitz), so we will continue using $N'(d)$
to denote $N'(d,\Omega)$ for any bounded Lipschitz domain. Henceforth, we will
also assume all domains to be bounded and Lipschitz.
Chipot proved many important properties 
of $N(d)$ and $N'(d),$ which we briefly 
remind the reader:
\begin{theorem}[{\cite{Chipot2021OnType}}] For any $d,k\in\mathbb{N}$ we have
\begin{enumerate}
    \item (Proposition 2.1) $N(d)\le \frac{d(d-1)}2+1$
    \item (Theorem 2.3) $N(d)\ge d$ and the inequality
        is strict if $d$ is odd.
    \item (Theorem 2.4, Theorem 4.1) $N(2)=2, N(3)=4, N(4)=4,  N(8)\le 16$
    \item (Theorem 3.3) $N(kd)\le k^2 N(d)$
    \item (Theorem 5.1) For any bounded Lipschitz domain $\Omega\subset \mathbb{R}^2$, $N'(2, \Omega)=3.$ This in particular implies that $N'(2)$ is well defined.
    \item (Theorem 5.2) For any bounded Lipschitz domain $\Omega$, $N'(d,\Omega)\ge N(d).$
\end{enumerate}
\end{theorem}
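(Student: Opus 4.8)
The whole theorem can be routed through one reduction. Taking Fourier transforms, for $u\in C_c^\infty(\mathbb R^d,\mathbb R^d)$ one has $\widehat{\nabla u}(\xi)=i\,\hat u(\xi)\otimes\xi$ (a rank-one matrix) and $\|\ell(\nabla u)\|_{L^2}^2=\int_{\mathbb R^d}|\ell(\hat u(\xi)\otimes\xi)|^2\,d\xi$ for any real-linear functional $\ell$ on $M_d(\mathbb R)$, extended complex-linearly. Hence \eqref{eq : Ndef} holds for $\ell_1,\dots,\ell_k$ if and only if $\sum_i|\ell_i(v\otimes\xi)|^2\gtrsim|\xi|^2|v|^2$ for $\xi\in\mathbb R^d$, $v\in\mathbb C^d$; splitting $v$ into real and imaginary parts and using homogeneity and compactness of the sphere, this is in turn equivalent to the statement that the subspace $K:=\bigcap_i\ker\ell_i\subseteq M_d(\mathbb R)$ \emph{contains no non-zero rank-one matrix}, the forbidden direction being exhibited by the modulated maps $u_n(x)=\tfrac1n\varphi(x)e^{in\,a\cdot x}b$ when $a\otimes b\in K\setminus\{0\}$. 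Writing $M(d)$ for the largest dimension of a rank-one-free linear subspace of $M_d(\mathbb R)$, this yields the formula $N(d)=d^2-M(d)$, and the problem for $N$ becomes one of extremal linear algebra.

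\textbf{Parts (1)--(4).} The symmetric trace-free matrices form a rank-one-free subspace — a symmetric rank-one matrix is $\lambda\xi\xi^{t}$, which is trace-free only if $\lambda|\xi|^2=0$ — of dimension $\tfrac{d(d+1)}2-1$, so $M(d)\ge\tfrac{d(d+1)}2-1$ and $N(d)\le\tfrac{d(d-1)}2+1$; concretely this is realised by the $d(d-1)/2$ skew entries of $\nabla u$ together with $\mathrm{div}\,u$, for which in fact the exact identity $\|\nabla u\|_{L^2}^2=2\|\mathrm{Skew}(\nabla u)\|_{L^2}^2+\|\mathrm{div}\,u\|_{L^2}^2$ holds after integration by parts; this proves (1). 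For (2), any subspace of codimension at most $d-1$ meets the $d$-dimensional space $\{v\otimes\xi_0:v\in\mathbb R^d\}$ of rank-$\le1$ matrices non-trivially, so $M(d)\le d^2-d$ and $N(d)\ge d$; and if $M(d)=d^2-d$ were attained, say by $\mathrm{span}(A_1,\dots,A_d)^{\perp}$, rank-one-freeness would force the determinant of the matrix with columns $A_1\xi,\dots,A_d\xi$ to be a homogeneous polynomial of degree $d$ vanishing nowhere on $\mathbb R^d\setminus\{0\}$, which is impossible for odd $d$ since it is odd under $\xi\mapsto-\xi$; this gives the strict inequality. For (3)--(4): $N(2)=2$ because the plane of conformal matrices $\left(\begin{smallmatrix}a&-b\\ b&a\end{smallmatrix}\right)$ is rank-one-free (its determinant is $a^2+b^2$); $N(3)=4$ is then forced between (1) and (2); $N(4)=4$ because the quaternionic left-multiplications $L_1,L_i,L_j,L_k\in M_4(\mathbb R)$ span a plane whose orthogonal complement (dimension $12$) is rank-one-free, since for $v,\xi\in\mathbb H$ the vector $(\langle v,e_m\xi\rangle)_m$ has norm $|v|\,|\xi|$; and for the doubling bound in (4) one views a $kd\times kd$ matrix as a $k\times k$ array of $d\times d$ blocks and applies the $N(d)$ functionals block-by-block — a rank-one $kd\times kd$ matrix has rank-$\le1$ blocks, so vanishing of all $k^2N(d)$ resulting functionals forces the matrix to vanish — giving $N(kd)\le k^2N(d)$ and hence $N(8)\le4N(4)=16$.

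\textbf{Parts (5)--(6).} Part (6) is a scaling argument: given functionals realising the $H^1$-inequality on $\Omega$, apply it to $u_\lambda(x)=u(\lambda x)$ for $u\in C_c^\infty(B)$ with $\overline B\subset\Omega$ a ball; dividing by $\lambda^{2-d}$ and letting $\lambda\to\infty$ kills the $L^2(\Omega)$-terms and leaves \eqref{eq : Ndef} with the same functionals, so $N'(d,\Omega)\ge N(d)$. For (5), the upper bound $N'(2,\Omega)\le3$ is precisely Korn's second inequality with the three entries of $\mathrm{Sym}(\nabla u)$, valid on every bounded Lipschitz domain. The lower bound $N'(2,\Omega)\ge3$ is where one must leave the Fourier picture: with two functionals $K=\bigcap_i\ker\ell_i\subseteq M_2(\mathbb R)$ has dimension at least $2$, and either $K$ contains a rank-one line — and then a one-dimensional laminate, a sawtooth profile oscillating between $\pm$ that rank-one matrix, produces maps with $\|\nabla u_n\|_{L^2}$ bounded below, $\|u_n\|_{L^2}\to0$ and $\|\ell_i(\nabla u_n)\|_{L^2}\to0$ — or $\det$ is a definite quadratic form on $K$, in which case $K$ is linearly equivalent to the plane of conformal matrices (the Cauchy--Riemann normal form for a $2\times2$ first-order elliptic system in the plane), and the holomorphic functions $z\mapsto z^n$ on a disc inside $\Omega$ satisfy $\|z^n\|_{L^2}\to0$, $\|\ell_i(\nabla z^n)\|_{L^2}\to0$ but $\|\nabla z^n\|_{L^2}\to\infty$; either way two functionals fail, and $N'(2,\Omega)=3$. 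As both bounds are domain-independent among Lipschitz domains, so is $N'$.

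\textbf{Main obstacle.} The only genuinely non-formal point is the conformal case in (5): rank-one-freeness of $K$ does not suffice to rescue the $H^1(\Omega)$-inequality, so one needs non-laminate oscillations supported in $K$ — exactly the gap between rank-one convexity (which governs $N$) and quasiconvexity (which governs $N'$). This is the crux that the paper's dictionary between laminates and martingales is designed to exploit, supplying the higher-dimensional analogues of the ad hoc $z\mapsto z^n$. Secondary wrinkles are the real-versus-complex bookkeeping in the compactness step, and the fact that the sharp equalities $N(3)=N(4)=4$ rest on the Hurwitz division algebras, which exist only for $d\in\{1,2,4,8\}$ and so cannot govern the general growth of $M(d)$.
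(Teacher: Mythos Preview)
Your proposal is essentially correct and recovers all six items, but by a route that differs from the paper's in two substantive ways. First, you obtain the key identity $N(d)=d^2-M(d)$ via Plancherel, whereas the paper proves it by recognising $C\|P_X A\|^2-\|Q_X A\|^2$ as a quadratic form and invoking the equivalence of rank-one convexity and quasiconvexity for quadratics; both yield the same rank-one-free characterisation, but the paper's route is what later lets it pass to $p\neq 2$ and to constant estimates via the laminate--martingale machinery. Second, for the lower bound in (5) you produce the explicit witnesses $z\mapsto z^n$ after reducing $K$ to the conformal plane; the paper instead argues in all dimensions at once by observing that any complex rank-one matrix $a\otimes b\in (X^{\mathbb C})^\perp$ yields an infinite-dimensional space $\{\Re(g(b\cdot x)a):g\ \text{entire}\}$ on which $P_X(\nabla u)=0$, and then invokes compactness of $H^1\hookrightarrow L^2$. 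Your argument is the $d=2$ specialisation of this, and is fine once you correct a small slip: on a disc of radius $r<1$ one has $\|\nabla z^n\|_{L^2}\to 0$, not $\infty$; what actually blows up is the ratio $\|\nabla z^n\|_{L^2}/\|z^n\|_{L^2}\sim n$, which is all you need.

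One genuine misdiagnosis is in your ``Main obstacle'' paragraph: the gap between $N$ and $N'$ in the paper is \emph{not} framed as rank-one convexity versus quasiconvexity, and the laminate--martingale dictionary is not what resolves it. The paper's point is purely algebraic: $N$ is governed by real rank-one matrices in $X^\perp$, while $N'$ is governed by \emph{complex} rank-one matrices in $(X^{\mathbb C})^\perp$ (equivalently, $\mathbb C$-ellipticity of $u\mapsto P_X(\nabla u)$), which is why $N'(d)=g_{\mathbb C}(d)=2d-1$ exactly. The laminate--martingale link is used elsewhere, for constants and for Ornstein's non-inequality. Your $z^n$ are precisely the witnesses to the failure of $\mathbb C$-ellipticity in the conformal case, not to a quasiconvexity defect.
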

In particular, the natural question of the
growth of $N(d)$ remained open: does $N(d)$ grow quadratically, or linearly,
or at a different rate?
It is not hard to 
see that one can 
without loss of generality
assume the functionals to 
be orthogonal to each other, so the question is 
essentially how many coordinates of $\nabla u$ are needed to define
the norm on $W^{1,p}$.
In particular, if $N(d)\approx d$ holds, 
this implies that, as $d$
grows, only a vanishingly small number of coordinates
of $\nabla u$ are really
needed to determine a norm,
i.e. the standard gradient norm is quite redundant.
In this regard, $N'(2)\neq N(2)$ is rather surprising,
as there seems to be no
intrinsic difference between the two spaces.

Our results hinge on the following observation (various forms of this 
claim have been known in the community of calculus of variations
for a long time; see remark \ref{rmk : hist} for background and history of this result):
\begin{theorem}\label{thm : solchipot}
Given a field $\mathbb K$, let $n_{\mathbb K}(d,k)$ denote the maximal dimension
a subspace $Y$ of $M_d(\mathbb K)$ can have without intersecting the set $\mathcal R_k$ of non-zero matrices
of rank at most $k$. Then
    \begin{equation}
        N(d)=d^2-n_{\mathbb R}(d,1)
    \end{equation}
    \begin{equation*}
        N'(d)=d^2-n_{\mathbb{C}}(d,1)
    \end{equation*}
\end{theorem}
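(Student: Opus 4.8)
The plan is to recognise $N(d)$ and $N'(d)$ as the minimal codimension of an appropriate subspace of $M_d(\mathbb R)$ and then read off the value from the definition of $n_{\mathbb K}$. Given linear functionals $\ell_1,\dots,\ell_k$ on $M_d(\mathbb R)$, assemble them into a single linear map $\mathcal A\colon M_d(\mathbb R)\to\mathbb R^k$ and set $Y:=\ker\mathcal A$, so $\dim_{\mathbb R}Y=d^2-\operatorname{rank}\mathcal A\ge d^2-k$. Writing $\mathcal R_1^{\mathbb R}$, $\mathcal R_1^{\mathbb C}$ for the non-zero rank-one matrices over $\mathbb R$, $\mathbb C$, the first step is to show that \eqref{eq : Ndef} holds if and only if $Y\cap\mathcal R_1^{\mathbb R}=\emptyset$, while \eqref{eq : N1def} holds if and only if the complexification satisfies $Y_{\mathbb C}\cap\mathcal R_1^{\mathbb C}=\emptyset$. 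For the former: since $\widehat{\nabla u}(\xi)=i\,\hat u(\xi)\otimes\xi$, Plancherel turns \eqref{eq : Ndef} into $\int|\hat u(\xi)|^2|\xi|^2\,d\xi\lesssim\int|\mathcal A(\hat u(\xi)\otimes\xi)|^2\,d\xi$; testing against $\hat u$ concentrated near an arbitrary frequency (symmetrised so that $u$ is real), $1$-homogeneity, and compactness of the sphere make this equivalent to $|a|^2|\xi|^2\lesssim|\mathcal A(a\otimes\xi)|^2$ for all $a,\xi\in\mathbb R^d$, i.e. to $\ker\mathcal A\cap\mathcal R_1^{\mathbb R}=\emptyset$. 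For the latter I would invoke the (classical but non-trivial) characterisation of Korn/Ne\v{c}as-type second inequalities: on a bounded Lipschitz domain, $\|\nabla u\|_{L^2}\lesssim\|u\|_{L^2}+\|\mathcal A\nabla u\|_{L^2}$ holds if and only if the operator $\mathcal A\nabla$ is $\mathbb C$-elliptic, meaning $\mathcal A_{\mathbb C}(a\otimes\xi)\ne0$ for all non-zero $a,\xi\in\mathbb C^d$, which is precisely $Y_{\mathbb C}\cap\mathcal R_1^{\mathbb C}=\emptyset$. (Necessity: if $\mathbb C$-ellipticity fails, $\mathcal A\nabla$ has an infinite-dimensional smooth kernel, on which the estimate would force the $H^1$ and $L^2$ norms to be equivalent, contradicting the Rellich embedding; sufficiency is a Lions-lemma/Ne\v{c}as-inequality argument. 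As a by-product this shows $N'(d,\Omega)$ is independent of the bounded Lipschitz domain, since $\mathbb C$-ellipticity depends on $\mathcal A$ alone.) I would cite this equivalence rather than reprove it.

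Granting the first step, the rest of the $N(d)$ identity is bookkeeping: $N(d)$ is the least $k$ for which there is a subspace $Y\subseteq M_d(\mathbb R)$ with $\dim_{\mathbb R}Y\ge d^2-k$ and $Y\cap\mathcal R_1^{\mathbb R}=\emptyset$; since every subspace of a rank-one-free subspace is again rank-one-free, such a $Y$ exists exactly when $d^2-k\le n_{\mathbb R}(d,1)$, so $N(d)=d^2-n_{\mathbb R}(d,1)$. Identically, $N'(d)$ is the least $k$ admitting a \emph{real} subspace $Y\subseteq M_d(\mathbb R)$ with $\dim_{\mathbb R}Y\ge d^2-k$ and $Y_{\mathbb C}\cap\mathcal R_1^{\mathbb C}=\emptyset$; as $\dim_{\mathbb C}Y_{\mathbb C}=\dim_{\mathbb R}Y$ this immediately gives the lower bound $N'(d)\ge d^2-n_{\mathbb C}(d,1)$.

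The step I expect to be the main obstacle is the matching upper bound $N'(d)\le d^2-n_{\mathbb C}(d,1)$: one must produce, for each $m\le n_{\mathbb C}(d,1)$, a \emph{real} subspace $Y\subseteq M_d(\mathbb R)$ of dimension $m$ with $Y_{\mathbb C}\cap\mathcal R_1^{\mathbb C}=\emptyset$, even though an optimal complex witness need not be defined over $\mathbb R$. I would handle this by a genericity argument: $\mathcal R_1^{\mathbb C}\cup\{0\}$ is the affine cone over the Segre variety, so the locus $\{W\in\mathrm{Gr}_{\mathbb C}(m,d^2):W\cap\mathcal R_1^{\mathbb C}\ne\emptyset\}$ of $m$-planes in $M_d(\mathbb C)$ meeting $\mathcal R_1^{\mathbb C}$ is Zariski-closed and defined over $\mathbb Q$; by the choice of $m$ (shrink an optimal subspace, which stays rank-one-free) it is a proper subvariety, hence its complement is a non-empty Zariski-open set, which therefore meets the Zariski-dense set of rational — in particular real — points, and any such point is the desired $Y$. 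Concretely, a dimension count in $\mathbb P^{d^2-1}$ against the $(2d-2)$-dimensional Segre variety shows $n_{\mathbb C}(d,1)=(d-1)^2$, already realised by a generic, hence rational, $(d-1)^2$-plane, so one could also exhibit $Y$ explicitly. Combining this with the lower bound gives $N'(d)=d^2-n_{\mathbb C}(d,1)$, and the whole argument reduces to the $\mathbb C$-ellipticity criterion (the one genuinely deep input) plus linear algebra and this small descent to real subspaces.
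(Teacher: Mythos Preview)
Your proposal is correct, but the route differs from the paper's in two instructive ways.

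For $N(d)$, the paper does not use Plancherel directly; instead it recasts the inequality as quasiconvexity at the origin of the quadratic form $f_{C,X}(A)=C\|P_X A\|^2-\|Q_X A\|^2$, invokes the (easy) equivalence of quasiconvexity and rank-one convexity for quadratic forms, and then reads off that a witness constant exists iff $X^\perp\cap\mathcal R_1=\emptyset$. Your Fourier argument is simpler and perfectly adequate for $p=2$; the paper's detour through rank-one convexity is what later lets it handle $L^p$, $p\ne2$, and estimate the constants via the Burkholder--martingale machinery, so it is doing double duty.

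For $N'(d)$ the two approaches are morally the same: the paper proves necessity exactly as you sketch (an infinite-dimensional smooth kernel plus Rellich), and proves sufficiency by quoting Ka{\l}amajska's integral representation, with a remark that this is precisely the $\mathbb C$-ellipticity criterion you invoke. The real difference is in how the descent issue is handled. You give a clean genericity argument on the Grassmannian; the paper never addresses descent abstractly because it does not need to: the optimal complex witness for $n_{\mathbb C}(d,1)=(d-1)^2$ is the kernel of the polynomial-multiplication map, which is already defined over $\mathbb Q$ (this is exactly the Hankel construction), so the explicit real subspace is built in. Your Zariski-density argument is more robust in that it would work even without knowing the value of $n_{\mathbb C}(d,1)$ or an explicit witness, whereas the paper's approach has the advantage of producing a concrete space (Hankel matrices) for which one can then analyse the constant.
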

This connection with algebra allows us to attack the problem from a very different angle and this will lead us to prove several interesting properties of $N(d), N'(d),$
by taking advantage 
of what is known about $n_{\mathbb{R}}(d,1)$.
We mention here the most important ones. The first, and perhaps most surprising, results
we obtain are
\begin{corollary}\label{cor : asymptsolchip}
For each $d\in \mathbb N$ larger than one we have
    \begin{equation*}
        N(d)\le 2d-2.
    \end{equation*}
    This bound is attained if $d=2^n+1$ for some $n\in\mathbb N$.
    Moreover, asymptotically we have $N(d)\sim 2d$, so the bound is asymptotically sharp.
\end{corollary}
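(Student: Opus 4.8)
The plan is to reduce the statement, via Theorem~\ref{thm : solchipot}, to a question about nonsingular bilinear maps, and then to settle that question by an explicit construction together with Hopf's classical cohomological obstruction. By Theorem~\ref{thm : solchipot}, $N(d)=d^{2}-n_{\mathbb{R}}(d,1)$, so I would first rephrase $n_{\mathbb{R}}(d,1)$ dually. A subspace $Y\subseteq M_{d}(\mathbb{R})$ of codimension $p$ can be written $Y=\bigcap_{i=1}^{p}\{A:\langle B_{i},A\rangle=0\}$ for a basis $B_{1},\dots,B_{p}$ of $Y^{\perp}$, and since $\langle B,uv^{t}\rangle=u^{t}Bv$, the property $Y\cap\mathcal{R}_{1}=\emptyset$ means exactly that the bilinear map $\beta\colon\mathbb{R}^{d}\times\mathbb{R}^{d}\to\mathbb{R}^{p}$, $\beta(u,v)=(u^{t}B_{i}v)_{i}$, is \emph{nonsingular}, i.e. $\beta(u,v)=0$ implies $u=0$ or $v=0$. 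Thus $N(d)$ equals the least $p$ for which a nonsingular bilinear map $\mathbb{R}^{d}\times\mathbb{R}^{d}\to\mathbb{R}^{p}$ exists, i.e. the Hopf--Stiefel number $d\mathbin{\#}d$; the same dictionary over $\mathbb{C}$ turns $N'(d)$ into its complex analogue, which equals $2d-1$ since any $2d-2$ hypersurfaces in $\mathbb{CP}^{d-1}\times\mathbb{CP}^{d-1}$ meet while complex polynomial multiplication realises $p=2d-1$.

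For $N(d)\le 2d-2$ I would exhibit a nonsingular bilinear map $\mathbb{R}^{d}\times\mathbb{R}^{d}\to\mathbb{R}^{2d-2}$ for every $d\ge 2$. When $d$ is even this is immediate: identify $\mathbb{R}^{d}$ with the $\mathbb{C}$-polynomials of degree $<d/2$ and multiply them; the product has degree $\le d-2$, hence lies in $\mathbb{C}^{d-1}=\mathbb{R}^{2d-2}$, and the map is nonsingular because $\mathbb{C}[x]$ is an integral domain. For odd $d$ the requisite map is produced by the explicit laminate/martingale families of this paper (for small $d$ one may instead take the pure part of a normed division algebra: for $d=3$ the imaginary quaternions give a nonsingular $\mathbb{R}^{3}\times\mathbb{R}^{3}\to\mathbb{R}^{4}$, i.e. $N(3)\le 4$). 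In all cases $N(d)\le 2d-2$.

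For the matching lower bound when $d=2^{n}+1$ I would invoke Hopf's obstruction. A nonsingular bilinear map $\mathbb{R}^{d}\times\mathbb{R}^{d}\to\mathbb{R}^{p}$ descends to a map $\mathbb{RP}^{d-1}\times\mathbb{RP}^{d-1}\to\mathbb{RP}^{p-1}$ which restricts to a linear embedding $\mathbb{RP}^{d-1}\hookrightarrow\mathbb{RP}^{p-1}$ on each of the two factors, so it pulls the degree-one generator of $H^{*}(\mathbb{RP}^{p-1};\mathbb{F}_{2})$ back to $x+y$; as the $p$-th power of that generator vanishes, this forces $(x+y)^{p}=0$ in $H^{*}(\mathbb{RP}^{d-1}\times\mathbb{RP}^{d-1};\mathbb{F}_{2})=\mathbb{F}_{2}[x,y]/(x^{d},y^{d})$. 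For $p=2d-2$ this expands, modulo $x^{d}$ and $y^{d}$, to $\binom{2d-2}{d-1}x^{d-1}y^{d-1}$, which vanishes for all $d\ge 2$ because $\binom{2d-2}{d-1}$ is even by Kummer's theorem (adding $d-1$ to itself in base two always carries) — so the obstruction is compatible with the construction above. For $p=2d-3=2^{n+1}-1$, however, every coefficient $\binom{2^{n+1}-1}{k}$ is odd, so $(x+y)^{2d-3}=x^{d-2}y^{d-1}+x^{d-1}y^{d-2}\neq 0$ in the quotient; hence no nonsingular map $\mathbb{R}^{d}\times\mathbb{R}^{d}\to\mathbb{R}^{2d-3}$ exists, $N(d)\ge 2d-2$, and therefore $N(d)=2d-2$. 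Finally, along $d=2^{n}+1\to\infty$ we get $N(d)/(2d)=1-1/d\to 1$, so $N(d)\sim 2d$ on that sequence and the bound $N(d)\le 2d-2$ cannot be improved to $N(d)\le(2-\varepsilon)d$; it is asymptotically sharp.

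The step I expect to be the main obstacle is realising the upper bound in odd dimensions. The binomial computation only certifies that Hopf's \emph{necessary} condition fails to obstruct $p=2d-2$; producing an honest nonsingular bilinear map $\mathbb{R}^{d}\times\mathbb{R}^{d}\to\mathbb{R}^{2d-2}$ --- equivalently a subspace of $M_{d}(\mathbb{R})$ of dimension $(d-1)^{2}+1$ with no nonzero rank-one element --- requires real input: a dimension count on the incidence variety $\{(B_{1},\dots,B_{2d-2},[u],[v]):u^{t}B_{i}v=0\ \text{for all }i\}$ shows that it surjects onto the space of $(2d-2)$-tuples of bilinear forms, so a generic choice is singular and one must exploit the exceptional algebraic structure, which is exactly what the explicit laminate families built in this paper provide. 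By contrast, the kernel/bilinear-form dictionary and Hopf's obstruction are entirely standard once the reformulation in Theorem~\ref{thm : solchipot} is available.
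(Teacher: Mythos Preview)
Your reduction to nonsingular bilinear maps and your lower bound at $d=2^{n}+1$ via the Hopf obstruction are both correct and match the paper (the latter is exactly the content of Theorem~\ref{thm : lowerbound2}, since $2^{\lceil\log_{2}(2^{n}+1)\rceil}=2^{n+1}=2d-2$). There are, however, two genuine gaps.

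\textbf{Upper bound for odd $d$.} The appeal to ``the explicit laminate/martingale families of this paper'' is a misreading: the laminate--martingale machinery in Section~\ref{sec : calcvar} produces probability measures that certify \emph{lower} bounds on rank-one convex envelopes (equivalently, \emph{upper} bounds on constants $C(X,p)$); it has nothing to do with constructing subspaces avoiding $\mathcal{R}_{1}$ or, dually, nonsingular bilinear maps. Your dimension count is also off: the incidence variety has dimension $(2d-2)d^{2}+2(d-1)-(2d-2)=(2d-2)d^{2}$, equal to that of the base, so genericity alone neither helps nor hurts. The paper obtains $g_{\mathbb{R}}(d)\le 2d-2$ for \emph{all} $d\ge 2$ in Proposition~\ref{prop : bindisj} by quoting Lam's explicit construction: there is a nonsingular bilinear map $\mathbb{R}^{d}\times\mathbb{R}^{d-1+\tau(2d-2,d-1)}\to\mathbb{R}^{2d-2}$, and one checks $\tau(2d-2,d-1)\ge 1$ always, so in particular $\mathbb{R}^{d}\times\mathbb{R}^{d}\to\mathbb{R}^{2d-2}$ exists. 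Your complex-polynomial trick for even $d$ is exactly the paper's simplified remark, but the odd case needs this external algebraic input.

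\textbf{The asymptotic $N(d)\sim 2d$.} You only establish the limit along the subsequence $d=2^{n}+1$; combined with monotonicity and $N(d)\le 2d-2$ this gives at best $\liminf N(d)/d\ge 1$, not $N(d)/(2d)\to 1$. The Hopf obstruction $N(d)\ge d\circ d=2^{\lceil\log_{2}d\rceil}$ is too weak here (it drops to roughly $d$ when $d$ is a power of two). The paper instead uses the stronger immersion bound $g(d)\ge\mathrm{Imm}(\mathbb{R}\mathbb{P}^{d-1})+1$ together with the Astey--Davis estimate $\mathrm{Imm}(\mathbb{R}\mathbb{P}^{d})=2d-\mathcal{O}(\log d)$ (Theorem~\ref{thm : algtop}), which gives $N(d)\ge 2d-\mathcal{O}(\log d)$ for every $d$ and hence the full asymptotic.
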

\begin{corollary}
For each $d\in \mathbb N$ we have
    \begin{equation*}
        N'(d)=2d-1.
    \end{equation*}
    In particular, the inequality $N'(d)\ge N(d)$ is always strict when $d>1$.
\end{corollary}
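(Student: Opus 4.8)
By Theorem~\ref{thm : solchipot} one has $N'(d)=d^2-n_{\mathbb C}(d,1)$, so the asserted value $N'(d)=2d-1$ is equivalent to the purely algebraic identity $n_{\mathbb C}(d,1)=(d-1)^2$, which I would prove by two matching inequalities. For the upper bound $n_{\mathbb C}(d,1)\le(d-1)^2$, suppose $Y\subseteq M_d(\mathbb C)$ with $\dim Y\ge(d-1)^2+1$. Then $\mathbb P(Y)$ is a linear subvariety of $\mathbb P^{\,d^2-1}$ of dimension at least $(d-1)^2$, while the projectivised rank-one locus is the Segre variety $\Sigma\cong\mathbb P^{d-1}\times\mathbb P^{d-1}$ of dimension $2d-2$. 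Since $(d-1)^2+(2d-2)=d^2-1=\dim\mathbb P^{\,d^2-1}$, the projective dimension theorem over the algebraically closed field $\mathbb C$ (two subvarieties of $\mathbb P^{N}$ whose dimensions sum to at least $N$ must meet) forces $\mathbb P(Y)\cap\Sigma\ne\varnothing$, and any point of this intersection lifts to a nonzero rank-one matrix lying in $Y$. Hence no subspace of dimension $(d-1)^2+1$ avoids $\mathcal R_1$.

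For the lower bound $n_{\mathbb C}(d,1)\ge(d-1)^2$ I would exhibit an explicit extremal subspace. Label the entries of a $d\times d$ matrix by $\{0,\dots,d-1\}^{2}$ and set
\[
  Y:=\Bigl\{\,A\in M_d(\mathbb C)\ :\ \textstyle\sum_{j+k=i}a_{jk}=0\ \text{ for every } i=0,\dots,2d-2\,\Bigr\},
\]
the space of matrices all of whose anti-diagonal (Hankel) sums vanish. The linear map $A\mapsto\bigl(\sum_{j+k=i}a_{jk}\bigr)_{i=0}^{\,2d-2}$ from $M_d(\mathbb C)$ to $\mathbb C^{2d-1}$ carries the matrix unit $E_{jk}$ to the standard basis vector $e_{j+k}$, hence is surjective, so $\dim Y=d^2-(2d-1)=(d-1)^2$. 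Identifying $\mathbb C^{d}$ with the polynomials of degree $<d$ through $v\leftrightarrow v(x)=\sum_j v_jx^{j}$, the $i$-th Hankel sum of the rank-one matrix $vw^{t}$ is exactly the coefficient of $x^{i}$ in the product $v(x)w(x)$; therefore $vw^{t}\in Y$ if and only if $v(x)w(x)\equiv 0$, and since $\mathbb C[x]$ is an integral domain this forces $v=0$ or $w=0$. Thus $Y\cap\mathcal R_1=\{0\}$.

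Combining the two inequalities gives $n_{\mathbb C}(d,1)=(d-1)^2$, and hence $N'(d)=d^2-(d-1)^2=2d-1$. For the final assertion, Corollary~\ref{cor : asymptsolchip} gives $N(d)\le 2d-2$ for every $d>1$, so $N(d)\le 2d-2<2d-1=N'(d)$ and Chipot's inequality $N'(d)\ge N(d)$ is strict whenever $d>1$. The one genuinely delicate point is the lower bound: one must produce a subspace of the largest conceivable dimension $(d-1)^2$ and check it misses $\mathcal R_1$. What makes it go through is that the field governing $N'$ is $\mathbb C$, over which the Segre dimension count above is exactly tight and the polynomial-multiplication description of the Hankel constraints renders both the dimension of $Y$ and the rank-one verification transparent; the only spot where a careless argument could slip is the surjectivity used to evaluate $\dim Y$.
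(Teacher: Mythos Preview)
Your proof is correct and follows essentially the same route as the paper. Both directions of your computation of $n_{\mathbb C}(d,1)=(d-1)^2$ already appear there: the projective dimension-count argument is exactly Proposition~\ref{prop : gclosed} (stated for $g_{\mathbb K}$ rather than $n_{\mathbb K}$), and your Hankel-kernel subspace $Y$ is precisely the polynomial-multiplication construction used to prove $g_{\mathbb K}(d)\le 2d-1$, rephrased on the $n_{\mathbb C}$ side of the duality $g_{\mathbb K}(d)=d^2-n_{\mathbb K}(d,1)$; the strictness via $N(d)\le 2d-2$ is likewise Corollary~\ref{cor : asymptsolchip} (Proposition~\ref{prop : bindisj}).
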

We also manage to improve on the best previously known lower bound:
\begin{corollary}
Equality $N(d)= d$ is attained if and only if $d\in\{1,2,4,8\}.$ Moreover, we have
\begin{equation*}
    N(d)\ge 2^{\lceil\log_2(d)\rceil}.
\end{equation*}
\end{corollary}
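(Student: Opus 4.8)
The plan is to feed Theorem~\ref{thm : solchipot} into the classical topology of nonsingular bilinear maps. Call a bilinear map $\phi\colon\mathbb{R}^{d}\times\mathbb{R}^{d}\to\mathbb{R}^{m}$ \emph{nonsingular} if $\phi(v,w)=0$ forces $v=0$ or $w=0$, and let $\mu(d)$ be the least $m$ for which one exists. First I would record the standard dictionary with rank-one-avoiding subspaces: under the identification $M_{d}(\mathbb{R})\cong\mathbb{R}^{d}\otimes\mathbb{R}^{d}$ sending $vw^{t}$ to $v\otimes w$, a subspace $Y\subseteq M_{d}(\mathbb{R})$ meets $\mathcal{R}_{1}$ only in $0$ exactly when the bilinear map $(v,w)\mapsto q(vw^{t})$ is nonsingular, $q\colon M_{d}(\mathbb{R})\to M_{d}(\mathbb{R})/Y$ being the quotient; conversely a nonsingular $\phi\colon\mathbb{R}^{d}\times\mathbb{R}^{d}\to\mathbb{R}^{m}$ extends, by bilinearity, to a linear map on $\mathbb{R}^{d}\otimes\mathbb{R}^{d}$ whose kernel is a rank-one-avoiding subspace of dimension at least $d^{2}-m$. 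Hence $n_{\mathbb{R}}(d,1)=d^{2}-\mu(d)$, and Theorem~\ref{thm : solchipot} turns this into the identity $N(d)=\mu(d)$. (Fixing $v\neq 0$ and varying $w$ already gives $\mu(d)\ge d$, i.e.\ $N(d)\ge d$.)

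For the equality statement I would observe that a nonsingular bilinear map $\mathbb{R}^{d}\times\mathbb{R}^{d}\to\mathbb{R}^{d}$ is exactly a bilinear multiplication on $\mathbb{R}^{d}$ with no zero divisors, that is, a (not necessarily associative or unital) real division algebra structure. By the Bott--Milnor--Kervaire theorem such a structure exists precisely when $d\in\{1,2,4,8\}$, realised by $\mathbb{R},\mathbb{C},\mathbb{H},\mathbb{O}$; combined with $\mu(d)\ge d$ this yields $N(d)=\mu(d)=d$ if and only if $d\in\{1,2,4,8\}$.

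For the lower bound I would invoke the classical Stiefel--Hopf condition (a consequence of the map of real projective spaces induced by $\phi$ and the identity $(x+y)^{m}=0$ in mod $2$ cohomology): existence of a nonsingular bilinear map $\mathbb{R}^{d}\times\mathbb{R}^{d}\to\mathbb{R}^{m}$ forces $\binom{m}{k}\equiv 0\pmod 2$ for all integers $k$ with $m-d<k<d$. Put $t=\lceil\log_{2}d\rceil$, so $2^{t-1}<d\le 2^{t}$, and suppose for contradiction that $m:=N(d)\le 2^{t}-1$. Since $N(d)\ge d>2^{t-1}$ we get $2^{t-1}<m<2^{t}$, so the $(t-1)$-st binary digit of $m$ equals $1$, whence $\binom{m}{2^{t-1}}$ is odd by Lucas's theorem. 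But $2^{t-1}<d$ and $m-d\le(2^{t}-1)-(2^{t-1}+1)=2^{t-1}-2<2^{t-1}$, so $k=2^{t-1}$ lies in the forbidden range $(m-d,d)$, a contradiction. Therefore $N(d)=\mu(d)\ge 2^{t}=2^{\lceil\log_{2}d\rceil}$.

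The only non-elementary ingredient, and the main obstacle, is the Bott--Milnor--Kervaire theorem, which I would use as a black box; it is exactly what rules out the powers of two $16,32,\dots$ that the binomial-coefficient obstruction alone cannot reach. The remaining steps — the tensor-product reformulation, the trivial bound $\mu(d)\ge d$, and the $2$-adic arithmetic — are routine, the only points needing care being the precise admissible range in the Stiefel--Hopf condition and the (harmless) possibility that the extended linear map fails to be surjective, which only enlarges the rank-one-avoiding kernel.
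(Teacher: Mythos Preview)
Your proposal is correct and follows essentially the same route as the paper: identify $N(d)$ with the minimal target dimension $g_{\mathbb R}(d)=\mu(d)$ of a nonsingular bilinear map, invoke Bott--Milnor--Kervaire for the equality case, and use the Stiefel--Hopf binomial obstruction for the lower bound. The only presentational differences are that the paper sketches the cohomological derivation of the Stiefel--Hopf condition (via the induced map $\mathbb R\mathbb P^{d-1}\times\mathbb R\mathbb P^{d-1}\to\mathbb R\mathbb P^{k-1}$ and K\"unneth) rather than citing it, while you conversely spell out the Lucas-theorem verification of $d\circ d\ge 2^{\lceil\log_2 d\rceil}$ that the paper leaves as ``one easily checks''.
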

These corollaries by themselves are enough to calculate $N(d)$ for $d$ up to $9$; we will
later see, using more convoluted methods, how to calculate $N(d)$ for $d$ up to $19$
(and we will provide lower and upper bounds for $N(d)$ up to $33$).
For the
sake of the exposition, and as most of the proofs of the 
required properties of $n_{\mathbb K}$ and related
functions are rather elementary to prove, we will 
include their proofs where possible,
trying to make them more 
accessible
to analysts (which amounts to  keeping the algebraic topology tools to a 
minimum). Doing so will also allow us to obtain streamlined 
and shorter proofs of Chipot's original results.\par
In order to estimate the constants present in these inequalities, we 
develop a systematic connection between martingales and laminates which allows us
to bound the constant from below and (for rank-one convexity) from above
by explicitly building laminates that witness these bounds.
\par
As a by-product of our study, we will obtain an explicit 
and much strengthened form of Korn's first and second 
inequalities which, to our knowledge, has not been explicitly noted 
before.
Namely,  we will prove
\begin{equation*}
    \|u\|_{H^1(\Omega)}
\lesssim
    \|u\|_{L^2(\Omega)}
+
    \|P_{\mathcal H}(\nabla u)\|_{L^2(\Omega)}, 
\end{equation*}
where $\mathcal H$ is the space of Hankel matrices,
i.e. matrices that are constant on each skew-diagonal, and $P_{\mathcal H}$
denotes the orthogonal projection
from $M_d(\mathbb{R})$ onto $\mathcal H$.
Note that $\mathcal H\subset \mathrm{Sym}$ and 
$\mathrm{dim}(\mathcal H)=2d-1,$ so this is indeed a 
strengthening of Korn inequalities (and thanks to $N'(d)=2d-
1$, this is dimensionally optimal). The results mentioned so far
are all consequences of the previously mentioned algebraic connection (and will be explored in sections \ref{sec : Chipot}, \ref{sec : second}
and \ref{sec : rect}, respectively concerning $N(d), N'(d)$ and the rectangular case);
in order to estimate the constants involved, a more refined set
of tools is needed. With this goal, we discuss in section \ref{sec : calcvar} a connection between laminates and martingales which we then exploit
to calculate the constants. This allows us to prove a bound for the first Korn-Hankel inequality which is sharp up to $\mathcal O(\log(d))$, and several new bounds for $p\neq 2$. This does not exhaust the potential
applications of the tools we introduce and as an example of this we 
provide, in section \ref{sec : Ornstein}, a new quantitative proof 
of Ornstein's non-inequality result.
\subsection{Notation}
In this paper, we use the standard Vinogradov notation where,
given two positive quantities $A,B$,
$A\lesssim B$ means that there exists a positive constant 
$C$ such that $A\le BC$. 
We write $A\lesssim_x B$ if the implicit constant $C$ is allowed to depend on $x$. 
Given two quantities $A$ and $B$, $A\approx B$ means $A \lesssim B$
and $A\gtrsim B$.
We will use $\mathbb K$ to denote an unspecified
field.
For $\mathbb K=\mathbb{R},\mathbb{C},$ we will always consider 
$M_{m,d}(\mathbb K)$ (the space of $m\times d$ matrices 
on $\mathbb K$) to be equipped with the Frobenius inner 
product, that is $\langle A,B\rangle=\mathrm{Tr}(AB^*)$ and associated Frobenius norm $\|A\|^2=\mathrm{Tr}(AA^*)$; when $m=d,$ we will write $M_{d}(\mathbb K)$.
Given a closed subspace $X$ of a Hilbert space $H$, we will write $P_X$
for the orthogonal projection on $X$ and $Q_X$ for the orthogonal projection on $X^\perp$. We use standard notation for functions and
function spaces such as can be found, for instance, in \cite{Dacorogna2004IntroductionVariations}.
\subsection{A roadmap}
The paper has two main threads. First, Chipot's work left open the
growth of $N(d)$ and the higher-dimensional behaviour of $N'(d)$;
we resolve these questions by proving the algebraic formulae
$N(d)=d^2-n_{\mathbb R}(d,1)$ and
$N'(d)=d^2-n_{\mathbb C}(d,1)$, from which the sharp bound
$N'(d)=2d-1$ and the asymptotically sharp estimate
$N(d)=2d(1-o(1))$ follow. The route to these formulae is:
Korn-type inequalities are first translated into rank-one convexity
and quasiconvexity conditions, these conditions are then reduced to
the algebraic problem of finding large subspaces of matrices avoiding
rank-one directions, and the resulting dimension estimates are finally
obtained from classical results on nonsingular bilinear maps and
projective spaces. This is why algebraic topology appears in what is
initially a PDE question. The second thread is quantitative: a
connection between laminates and martingales is introduced so that the
rank-one convex obstructions encoded by laminates can be combined with
Burkholder's sharp martingale inequalities, yielding constants and
sharpness information that the qualitative laminate picture alone does
not provide (at least, not as easily).

\section{Preliminary results in calculus of variations}\label{sec : calcvar}
We briefly recall some concepts from
calculus of variations:
a function
$f\colon M_{m,d}(\mathbb{R})\to \mathbb{R}$ is said
to be rank-one convex if it is 
convex on each 
rank-one line, i.e. on each 
segment connecting two matrices $A,B$
such that
$\mathrm{rank}(A-B)=1$. A 
rank-one convex function 
is locally
Lipschitz (\cite[lemma 5.6]{Rindler}).
Similarly, a continuous 
function $f$
is said to be quasiconvex if, for all
$u\in W^{1,\infty}_0(B_{\mathbb{R}^d}(0,1),\mathbb{R}^m)$
we have
\begin{equation*}
    \int_{B_{\mathbb{R}^d}(0,1)} f(A+\nabla u)\ge |B_{\mathbb{R}^d}(0,1)| f(A),
\end{equation*}
where $|B_{\mathbb R^d}(0,1)|$ denotes the $d$-dimensional Lebesgue
measure of the unit ball $B_{\mathbb{R}^d}(0,1)$.
It is well known (see \cite[Proposition 5.3]{Rindler} and \cite[Lemma 4.3]{Muller1999VariationalTransitions})
that quasiconvex functions are rank-one convex, and the opposite implication (known
as Morrey's problem) is known to be false for $m\ge 3, d\ge 2$, as proved by Šverák (\cite{Sverak1992Rank-oneQuasiconvexity}, see also \cite{Grabovsky2017}). In dimension $m=2, d\ge 2$, it is not known whether rank-one convexity 
and quasiconvexity are equivalent in general, though several positive results are known for 
particular functions and particular domains (see \cite{Harris2018}, \cite{Faraco2008}, \cite{Muller1999Rank-oneMatrices}).
We define the
rank-one convex envelope $f^{rc}$ and the quasiconvex envelope $f^{qc}$ 
of a function $f\colon M_{m,d}\to \mathbb{R}$ respectively
as
\begin{equation*}
    f^{rc}(A)=\sup\{g(A)\colon  g\le f\ \mathrm{and}\ g\ \mathrm{ rank-one \ convex}\}
\end{equation*}
and
\begin{equation*}
    f^{qc}(A):=\sup\{g(A)\colon  g\le f\ \mathrm{and}\ g\ \mathrm{ quasiconvex}\}.
\end{equation*}
The envelopes admit an alternate description by duality. For $f^{rc}$ we have 
(see 
\cite[Theorem 6.10]{Dacorogna2008DirectVariations}
for the formula for the envelope,
\cite{Pedregal1993LaminatesMicrostructure} and \cite[Section 9.1]{Rindler} for
definitions and generalities on
laminates):
\begin{equation}\label{eq : rcenv}
    f^{rc}(A)=\inf \left\{\langle f,\mu\rangle\colon  \mu\in\mathcal M_A\right\},
\end{equation}
where $\mathcal M_A$ represents
the set of laminate measures (of finite order) having
$A$ as their barycentre.
Similarly, for the quasiconvex envelope
we have Dacorogna's formula (\cite[Theorem 6.9]{Dacorogna2008DirectVariations}, \cite[Appendix]{Kinderlehrer1991CharacterizationsGradients})
\begin{equation}
\label{eq : qcenv}
    f^{qc}(A)
=
    \underset{
    u\in W^{1,\infty}_0(B_{\mathbb{R}^d}(0,1), \mathbb{R}^m)
    }
\inf
    \frac{1}{|B_{\mathbb{R}^d}(0,1)|}
    \int_{B_{\mathbb{R}^d}(0,1)}f(A+\nabla u(x))\mathrm{d} x
\end{equation}
It is not hard to prove that, for
a continuous function $f$, either
$f^{qc}\equiv-\infty$ or 
$f^{qc}>-\infty$ and 
$f^{qc}$ is quasiconvex, 
and similarly for rank-one convexity.
It follows that for homogeneous functions $f$, the existence of a lower quasiconvex bound 
is equivalent to $f^{\mathrm{qc}}(0)=0$, a necessary condition for which is that $f^{\mathrm{rc}}(0)=0$.\par
In this section, we focus on determining when the
rank-one convex envelope of certain functions is real valued (in other words, 
whether $f$ has a lower rank-one convex envelope). The functions 
of interest have the form
\begin{equation}\label{eq : shape}
	f_{X,p,C}(A)
=
	C^p\| P_X(A)\|^p-\| Q_X(A)\|^p,
\end{equation}
where $X$ is a subspace of $M_d(\mathbb{R})$, $\|\cdot\|$ is the Frobenius norm
and $P_X, Q_X$ are the associated
orthogonal projections
onto $X$ and $X^\perp$
respectively.
 In the following we will
denote a function of the class above as $f$ or $f_p$
when $X$ and $C$
are clear from the context.\par
The reasons for studying this class of functions
are manifold. In this paper, we are interested in their 
relations with differential inequalities.
In this regard, we note that
$f^{qc}$ is real valued iff
$f^{qc}(0)=0$, and that this is 
equivalent, thanks to \eqref{eq : qcenv},
to the inequality
\begin{equation}\label{eq: coerc}
	\| Q_X(\nabla u)\|_{L^p(\mathbb{R}^d)}
\le
	C \| P_X(\nabla u)\|_{L^p(\mathbb{R}^d)}
\end{equation}
holding for all $u\in C^\infty_c(\mathbb{R}^d,\mathbb{R}^d)$.
Since rank-one convexity is a necessary
condition for quasi-convexity, such a study is naturally
interesting.\par
For $p=2$, the function above becomes a quadratic form, namely
\begin{equation*}
	f(A)
=
	C^2\langle P_X(A),A\rangle
-
	\langle Q_X(A), A\rangle.
\end{equation*}
For quadratic functions, it is known (and easy to prove, see
\cite[Theorem 5.25(i)]{Dacorogna2008DirectVariations}) that
quasiconvexity is equivalent to rank-one convexity which in turn is 
equivalent to $f(a\otimes b)\ge 0$ for all $a,b\in\mathbb{R}^d$. Hence, 
$f^{qc}(0)=0$ is equivalent to
\begin{equation*}
	C^2\langle P_X(A),A\rangle
-
	\langle Q_X(A), A\rangle
=
	f(A)
\ge
	 0
\end{equation*}
for all $A$ such that $\mathrm{rank}(A)=1$. In other words,
\begin{equation*}
	\inf_{A\, \colon \, \mathrm{rank}(A)=1}
	\frac{\|P_X(A)\|^2}{\|Q_X(A)\|^2}
\ge
	\frac1{C^2}.
\end{equation*}
In this section, we want to extend this result to $p\in (1,\infty)$
and provide a sharpness result there as well.
In particular, we will prove the following:
\begin{theorem}\label{thm : mainthm}
Let $f_C$ be a function of the form \eqref{eq : shape}.
Then there exists a $C$ such that $f_C^{rc}$ is real-valued
if and only if $X^\perp$ does not contain rank-one matrices.
In that case, we can choose $C=(p^*-1)B$,
where 
\begin{equation*}
	B^{-1}
=
	\inf_{A\colon \mathrm{rank}(A)=1}
	\frac{\|P_X(A)\|}{\|Q_X(A)\|}
\end{equation*}
and $p^*=\max\left(p,\frac{p}{p-1}\right)$.
\end{theorem}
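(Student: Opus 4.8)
The plan is to prove the two directions separately; the forward direction rests on the laminate--martingale dictionary together with Burkholder's sharp differential-subordination inequality, while the reverse direction is an explicit one-parameter family of laminates.

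For the ``only if'' part, suppose $X^\perp$ contains a nonzero rank-one matrix $R$, so that $P_X(R)=0$ and $Q_X(R)=R$. For every $C$ and every $t>0$ the probability measure $\tfrac12\delta_{tR}+\tfrac12\delta_{-tR}$ is a laminate of order two with barycentre $0$, since its two atoms differ by the rank-one matrix $2tR$. As $f_C$ is even and positively $p$-homogeneous, testing the envelope formula \eqref{eq : rcenv} against this measure gives $f_C^{rc}(0)\le f_C(tR)=-t^p\|R\|^p\to-\infty$ as $t\to\infty$. Since, for continuous $f_C$, the function $f_C^{rc}$ is either identically $-\infty$ or finite everywhere, this forces $f_C^{rc}\equiv-\infty$ for every $C$, so no admissible $C$ exists.

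For the ``if'' part, assume $X^\perp$ contains no rank-one matrix. I would first record that $B<\infty$: the set of rank-one matrices of unit Frobenius norm is compact and, by hypothesis, $A\mapsto\|P_X(A)\|$ is continuous and strictly positive on it, hence bounded below by some $c>0$; combined with $\|Q_X(A)\|\le\|A\|$ and the $1$-homogeneity of $\|P_X(\cdot)\|$ and $\|Q_X(\cdot)\|$, this yields $B^{-1}\ge c>0$ and the pointwise estimate $\|Q_X(A)\|\le B\|P_X(A)\|$ for every rank-one $A$. Now fix $C=(p^*-1)B$. Since $f_C$ is positively $p$-homogeneous and $f_C(0)=0$, proving $f_C^{rc}$ real-valued reduces, via \eqref{eq : rcenv}, to the inequality $\int\|Q_X(A)\|^p\,d\mu(A)\le C^p\int\|P_X(A)\|^p\,d\mu(A)$ for every finite-order laminate $\mu$ with barycentre $0$. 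The key step is that any such $\mu$ is the terminal law of a finite martingale $(M_n)_{n=0}^N$ with $M_0=0$ and increments of rank at most one (each elementary splitting contributes a rank-one increment). Setting $Y_n:=B\,P_X(M_n)$ and $W_n:=Q_X(M_n)$ produces martingales valued in the Hilbert spaces $X$ and $X^\perp$, with $Y_0=W_0=0$ and $\|W_n-W_{n-1}\|=\|Q_X(M_n-M_{n-1})\|\le B\|P_X(M_n-M_{n-1})\|=\|Y_n-Y_{n-1}\|$ pointwise, by the bound just established applied to the (rank-one or zero) increments. Thus $W$ is differentially subordinate to $Y$, and Burkholder's inequality in its Hilbert-space-valued form gives $\|W_N\|_{L^p}\le(p^*-1)\|Y_N\|_{L^p}$, that is $\|Q_X(M_N)\|_{L^p}\le(p^*-1)B\,\|P_X(M_N)\|_{L^p}=C\|P_X(M_N)\|_{L^p}$; raising to the $p$-th power gives precisely $\langle f_C,\mu\rangle\ge0$, hence $f_C^{rc}(0)=0$.

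The main obstacle is conceptual: one must recognise that, after rescaling $P_X$ by $B$, the rank-one increment structure of laminate martingales is exactly the hypothesis of differential subordination, and that the correct transform constant is Burkholder's sharp $p^*-1$ --- which for $p=2$ reduces to $C=B$, matching the quadratic case discussed before the theorem. Granting this, the laminate--martingale correspondence, the pointwise increment bound and the homogeneity bookkeeping are all routine.
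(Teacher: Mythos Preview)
Your proposal is correct and follows essentially the same approach as the paper: the laminate--martingale correspondence together with Burkholder's differential-subordination inequality for the forward direction, and the elementary two-point laminate $\tfrac12\delta_{tR}+\tfrac12\delta_{-tR}$ for the reverse. Your version is in fact slightly more explicit (you carry the factor $B$ through rather than normalising to $B=1$, and you include the compactness argument for $B<\infty$), but the structure and the key ideas are identical.
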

In some particular cases,
we can also obtain a sharp estimate on $C$:
\begin{theorem}\label{thm : sharp}
Let $X$ be a subspace of $M_d(\mathbb{R})$
such that 
\begin{enumerate}
\item For all matrices $R\in M_d(\mathbb{R})$ having rank-one, 
\begin{equation*}
	\|P_X(R)\|
\ge
	\frac1c\|Q_X(R)\|.
\end{equation*}
\item There exist  $A\in X, B\in X^\perp$ such that $\|A\|=\|B\|=1$,
$\mathrm{rank}(A\pm cB)=1$.
\end{enumerate}
Then the function
\begin{equation*}
	f(A)
=
	C^p\| P_X(A)\|^p
-
	\| Q_X(A)\|^p
\end{equation*}
has real-valued rank-one convex envelope
if and only if $C\ge c(p^*-1).$
\end{theorem}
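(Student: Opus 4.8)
Theorem~\ref{thm : sharp} is a sharpened version of Theorem~\ref{thm : mainthm} under the extra hypotheses (1) and (2), so I would organise the proof around two halves: the sufficiency ($C \ge c(p^*-1) \Rightarrow f^{rc}$ real-valued) and the necessity ($f^{rc}$ real-valued $\Rightarrow C \ge c(p^*-1)$). For sufficiency, note that hypothesis (1) says precisely $B^{-1} = \inf_{\mathrm{rank}(R)=1}\|P_X(R)\|/\|Q_X(R)\| \ge c$, i.e. $B \le c^{-1}$, so $c(p^*-1) \ge (p^*-1)B^{-1}$... wait, I need $B$ itself small, so $c \le B^{-1}$ gives $c(p^*-1) \le$ nothing useful directly. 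The correct reading: Theorem~\ref{thm : mainthm} gives real-valuedness for $C = (p^*-1)B$ where $B^{-1}$ is the infimum; hypothesis (1) gives $B^{-1}\ge c$, hence $B \le 1/c$, which is the \emph{wrong} direction. So instead hypothesis~(2) must force $B \ge 1/c$, i.e. the infimum is \emph{exactly} $c$ and is attained (or approached) along the configuration $A \pm cB$; combined with (1) this pins $B^{-1} = c$ exactly. Then Theorem~\ref{thm : mainthm} immediately yields that $f_C^{rc}$ is real-valued for every $C \ge (p^*-1)B = c(p^*-1)$, and monotonicity of the class in $C$ (larger $C$ makes $f_C$ larger, hence keeps the rank-one convex envelope real-valued) extends this to all such $C$. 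This is the easy half.

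**The hard half: necessity.** The real content is showing that if $C < c(p^*-1)$ then $f_C^{rc} \equiv -\infty$, equivalently $f_C^{rc}(0) = -\infty$. By the laminate duality formula~\eqref{eq : rcenv}, this amounts to constructing, for each such $C$, a sequence of finite-order laminates $\mu_n$ with barycentre $0$ such that $\langle f_C, \mu_n\rangle \to -\infty$. This is exactly where the martingale--laminate machinery of section~\ref{sec : calcvar} (promised in the introduction) is meant to do the work. The plan is to build these laminates inside the two-dimensional plane $\mathrm{span}(A,B)$ guaranteed by hypothesis~(2): on this plane $\mathrm{rank}(A\pm cB)=1$ means the two rank-one directions make the configuration behave like a one-dimensional problem, and one can run a binary-splitting (dyadic martingale) construction alternating between the two rank-one lines through $A+cB$ and $A-cB$. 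Along such a laminate, $\|P_X(\cdot)\|$ essentially tracks the $A$-component and $\|Q_X(\cdot)\|$ the $B$-component, so $\langle f_C,\mu_n\rangle$ reduces to a scalar martingale computation: one is comparing $C^p\,\mathbb{E}|M_n^{(1)}|^p$ against $\mathbb{E}|M_n^{(2)}|^p$ for a pair of coupled martingales, and the optimal exponent $c(p^*-1)$ should emerge from the sharp constant in Burkholder's inequality (or equivalently from the extremal function for the martingale transform / the $p$-th moment of a random walk), which is precisely where the factor $p^*-1$ comes from.

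**Main obstacle and how to handle it.** The main difficulty is making the reduction ``laminate on $\mathrm{span}(A,B)$ $\leftrightarrow$ scalar martingale'' rigorous and showing the resulting scalar optimisation really has extremal value $c(p^*-1)$ and not something larger. Concretely: (i) one must check that the laminate stays supported where the rank-one structure is available at each splitting step — this is where $\mathrm{rank}(A\pm cB)=1$ is used, it guarantees each proposed split connects matrices differing by a rank-one matrix; (ii) one must identify the extremal scalar problem, and here I would invoke the known sharp martingale inequality: the largest $C$ for which $\mathbb{E}[C^p|N_n|^p - |M_n|^p]$ stays bounded below over all $\pm1$-martingale transforms $M$ of a martingale $N$ is $C = p^*-1$ (Burkholder's constant), so after the rescaling by $c$ coming from hypothesis~(1)–(2) the threshold is $c(p^*-1)$. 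I expect step (ii) to be the crux: either cite the Burkholder bound directly, or re-derive the one-sided version via the explicit Burkholder function $U_p$ and a supermartingale/telescoping argument, which is the cleanest self-contained route. Once the scalar threshold is in hand, letting $n\to\infty$ in the sub-critical regime $C<c(p^*-1)$ drives $\langle f_C,\mu_n\rangle$ to $-\infty$, completing the necessity direction and hence the theorem. A final remark: one should double-check the edge behaviour at $C = c(p^*-1)$ itself, which belongs to the real-valued side by the sufficiency half, so the characterisation is the stated closed inequality $C \ge c(p^*-1)$.
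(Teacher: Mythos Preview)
Your approach is essentially the paper's: sufficiency is deferred to Theorem~\ref{thm : mainthm}, and necessity is obtained by embedding a scalar dyadic martingale pair $(f_n,g_n)$ with $g_n$ a $\pm1$-transform of $f_n$ into $\mathrm{span}(A,B)$ via $M_n=Af_n+cBg_n$, so that $\mathrm{d}M_n=(A\pm cB)\,\mathrm{d}f_n$ has rank one, $P_X(M_n)=Af_n$, $Q_X(M_n)=cBg_n$, and the sharpness statement in Burkholder's theorem (Theorem~\ref{thm : burk}) yields $\langle f_C,\mu_n\rangle\to-\infty$ for $C<c(p^*-1)$. One small correction to your bookkeeping: hypothesis~(2) gives a rank-one matrix with $\|Q_X\|/\|P_X\|=c$, hence forces $B\ge c$ (not $B\ge 1/c$); the paper in fact treats the normalised case and simply invokes Theorem~\ref{thm : mainthm} for the ``if'' direction without dwelling on this.
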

To prove this result, we will need to introduce a connection between laminates and martingales.
Before we do so formally, let us describe the connection heuristically:
a finite order laminate (with barycentre in $0$) is constructed by applying a series of splits
to the original Dirac measures $\delta_0$, that is to say operations 
where one passes from the measure $\delta_A$
to $t \delta_{B}+(1-t)\delta_C$ (for some appropriate $t,B,C$) which preserve the barycentre of the measure. This can be seen 
as a martingale on the dyadic tree, with each split represented by a branching in the tree.
Let us set some notation concerning dyadic trees:
\begin{definition}[Dyadic trees]
    Let us denote the dyadic tree by $\mathcal T$,
i.e. the set of finite sequences
$(\varepsilon_0,\dots, \varepsilon_n)$ 
with elements
$\varepsilon_{i}\in\{0, 1\}$.
Given such a sequence $\sigma, |\sigma|$ will denote its length $|\sigma|=n+1$.
We will write $T_m:=\{\sigma\in\mathcal T\colon |\sigma|=m\},
\mathcal T_m:=\{\sigma\in\mathcal T\colon |\sigma|\le m\}$.
Given a sequence $\sigma=(\epsilon_0, \epsilon_1,\dots,\epsilon_m)$ and a number $\varepsilon\in\{0,1\}$, we will 
write $\sigma \frown \varepsilon$ for the sequence obtained by adding $\varepsilon$
in final position, i.e.
\begin{equation*}
    \sigma\frown \varepsilon=(\epsilon_0,\dots, \epsilon_m,\varepsilon).
\end{equation*}
\end{definition}
\begin{definition}[Associated martingale]
Let $\mu$ be a laminate of order $n$.

Then there exists a function
$M\colon \mathcal T_n\to M_d(\mathbb{R})$ such that for all $x\in \mathcal T_{n-1}$
\begin{equation}
	t(x)M(x\frown  0)
+
	(1-t(x))M(x\frown  1)
=
	M(x),    
\end{equation}
and
\begin{equation}\label{eq : rankmart}
	\mathrm{rank}\big(
		M(x\frown  0)
	-
		M(x\frown  1)
		\big)
\le
	1
\end{equation}
Let $(X_i)_{i\in \mathbb{N}\cup\{0\}}$
denote the random walk on $\mathcal T$
such that for every $x\in \mathcal T_n$,
whose length we denote as $m$, we have
\begin{equation*}
    \mathbb P(X_{m+1}=x\frown 
0|X_m=x)=t(x)
\end{equation*}
and
\begin{equation*}
    \mathbb P(X_{m+1}=x\frown 1|X_m=x)=1-t(x).
\end{equation*}
We define the 
associated dyadic martingale
as $M(X_{i\wedge n})$.
\end{definition}
Condition \eqref{eq : rankmart} can be
reformulated in a more natural manner:
if we denote by $\mathrm{d}M_i$ the increments
of the martingale (i.e. $\mathrm{d}M_i=M_{i+1}-M_i$), then assuming $X_{i+1}=x\frown 0$
we have
\begin{align*}
    \mathrm{d}M_i&=M(x\frown 0)-M(x)
    \\&=M(x\frown 0)-t(x)M(x\frown 0)-(1-t(x))M(x\frown 1)\\&=(1-t(x))(M(x\frown 0)-M(x\frown 1))
\end{align*}
and similarly if we instead assume $X_{i+1}=x\frown 1$
\begin{equation*}
    \mathrm{d}M_i=-t(x)(M(x\frown 0)-M(x\frown 1)),
\end{equation*}
so condition \eqref{eq : rankmart} is
satisfied if and only if $\mathrm{rank}(\mathrm{d}M_i)\le 1$ almost everywhere.
\begin{remark}
    Many other results connecting certain classes of martingales
    with convexity properties are known. For example, Aumann and Hart
    in \cite{Aumann1986Bi-convexityBi-martingales} found a similar result for
    bi-convexity and bi-martingales.
\end{remark}
We will also need the following result concerning martingales:
\begin{theorem}[Burkholder \cite{Burkholder}, see \cite{osekowski} for an exposition]\label{thm : burk}
Let $\mathcal H$ be a Hilbert space and let
$X_i, Y_i$ be two $\mathcal H$-valued martingales
(with respect to the same filtration). 
If $(X_i)$ is differentially subordinated to $(Y_i)$,
i.e. $\|\mathrm{d}X_{i}\|\le \|\mathrm{d}Y_{i}\|$ for all $i\in\mathbb N$ and $\|X_0\|\le \|Y_0\|$, then for all $i\in \mathbb{N}$
\begin{equation}\label{eq : burk}
	\|X_i\|_{L^p(\mathbb P)}
\le
	(p^*-1)\|Y_i\|_{L^p(\mathbb P)},
\end{equation}
where $p^*=\max\left(p,\frac{p}{p-1}\right)$.
Moreover, the inequality is sharp
\footnote{
	That is to say, no smaller constant works
	}
among martingales defined on dyadic trees such that $\|\mathrm{d}X_i\|=\|\mathrm{d}Y_{i}\|.$
\end{theorem}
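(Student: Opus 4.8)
I would prove the $L^p$ bound by Burkholder's method of a special (Bellman) function, and then obtain sharpness by exhibiting extremal dyadic martingale pairs. The heart of the argument is to produce a function $U\colon \mathcal H\times\mathcal H\to\mathbb R$ with three properties, from which \eqref{eq : burk} follows by a one-line telescoping. Concretely, I would take
\begin{equation*}
	U(x,y)=p\Big(1-\tfrac{1}{p^*}\Big)^{p-1}\big(\|x\|-(p^*-1)\|y\|\big)\big(\|x\|+\|y\|\big)^{p-1},
\end{equation*}
and verify: (i) \emph{majorization}, $U(x,y)\ge \|x\|^p-(p^*-1)^p\|y\|^p$ for all $x,y\in\mathcal H$; (ii) \emph{initial condition}, $U(x,y)\le 0$ whenever $\|x\|\le\|y\|$; (iii) \emph{concavity along subordinate directions}, for all $x,y,h,k\in\mathcal H$ with $\|h\|\le\|k\|$,
\begin{equation*}
	U(x+h,y+k)\le U(x,y)+\langle \nabla_x U(x,y),h\rangle+\langle\nabla_y U(x,y),k\rangle.
\end{equation*}
Property (ii) is immediate: since $p^*\ge 2$ one has $\|x\|-(p^*-1)\|y\|\le (2-p^*)\|y\|\le 0$, while the other factor and the constant are nonnegative. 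Property (i) is homogeneous of degree $p$ in $(x,y)$, so after scaling it reduces (using rotational invariance, $U(x,y)=u(\|x\|,\|y\|)$) to the one-variable inequality obtained by fixing $\|y\|=1$ and writing $s=\|x\|\ge0$, which is a routine calculus estimate (checking tangency between the two sides). The content is in (iii).

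\textbf{From $U$ to the inequality.} Granting (i)--(iii), the proof of \eqref{eq : burk} proceeds by a supermartingale argument. Using (iii), the martingale property $\mathbb E[\mathrm dX_i\mid\mathcal F_i]=\mathbb E[\mathrm dY_i\mid\mathcal F_i]=0$, and $\|\mathrm dX_i\|\le\|\mathrm dY_i\|$, one takes conditional expectations to get $\mathbb E[U(X_{i+1},Y_{i+1})\mid\mathcal F_i]\le U(X_i,Y_i)$ (the gradient terms vanish after integration), so $i\mapsto \mathbb E\,U(X_i,Y_i)$ is nonincreasing. Since we may assume $Y_i\in L^p$ (else \eqref{eq : burk} is trivial) and then $X_i\in L^p$ by differential subordination, all quantities are finite, and
\begin{equation*}
	\mathbb E\|X_i\|^p-(p^*-1)^p\,\mathbb E\|Y_i\|^p\ \overset{(i)}{\le}\ \mathbb E\,U(X_i,Y_i)\ \le\ \mathbb E\,U(X_0,Y_0)\ \overset{(ii)}{\le}\ 0,
\end{equation*}
where the last step uses $\|X_0\|\le\|Y_0\|$. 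This is exactly \eqref{eq : burk}.

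\textbf{Sharpness.} For the final assertion I would show that for every $\varepsilon>0$ there is a finite-order dyadic martingale pair $(X_i,Y_i)$ with $\|\mathrm dX_i\|=\|\mathrm dY_i\|$ for all $i$ and $\|X_n\|_{L^p}>(p^*-1-\varepsilon)\|Y_n\|_{L^p}$. The point is that $U$ is in fact the least function satisfying (i)--(iii): it equals the Bellman function $\sup\mathbb E\big[\|X_n\|^p-(p^*-1)^p\|Y_n\|^p\big]$ over all such pairs issuing from $(x,y)$. Hence extremal pairs are built by splitting, at each node of the dyadic tree, along directions $(h,k)$ with $\|h\|=\|k\|$ for which the inequality (iii) is an equality, and iterating until the current value is close to the locus where $U$ meets the majorant in (i). I would carry this out explicitly (it suffices to work with $\mathcal H=\mathbb R$ or $\mathbb R^2$), or invoke the extremal constructions already in \cite{Burkholder} and \cite{osekowski}.

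\textbf{Main obstacle.} The delicate step is verifying the concavity property (iii) for this explicit $U$: the reduction from the vector inequality on $\mathcal H\times\mathcal H$ to the scalar inequality for $u(\|x\|,\|y\|)$ (combining monotonicity of $u$ with the triangle inequality in $\mathcal H$), and the behaviour at the non-differentiable locus $x=0$ or $y=0$, where one must either argue with one-sided estimates or smooth $U$ and pass to the limit. For the sharpness half, the analogous difficulty is isolating the extremal splitting directions and controlling the resulting dyadic construction.
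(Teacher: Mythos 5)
The paper does not prove this statement at all: Theorem \ref{thm : burk} is imported verbatim from Burkholder, with \cite{osekowski} cited for an exposition, and is then used as a black box in the proofs of Theorems \ref{thm : mainthm} and \ref{thm : sharp}. Your proposal is precisely the classical special-function (Bellman) argument of those references, with the correct $U$, the correct three properties, the correct orientation of the subordination (the estimated martingale in the first slot), and the correct supermartingale/telescoping deduction of \eqref{eq : burk}, including the reduction of the initial condition to $\|X_0\|\le\|Y_0\|$; the sharpness plan via extremal dyadic pairs with $\|\mathrm{d}X_i\|=\|\mathrm{d}Y_i\|$ (e.g.\ $\pm1$-transforms) is also the standard one and matches the restricted class in which the paper asserts sharpness. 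So there is no divergence of method to report --- you are reconstructing the cited proof rather than replacing it. Do note, however, that as written your argument is an outline rather than a proof: the two steps you flag as ``the main obstacle'' --- the verification of the concavity property (iii) for this explicit $U$ (including the non-differentiable locus, handled by one-sided estimates or mollification, and the integrability of the gradient terms needed to kill the conditional expectations, which follows from H\"older since $\nabla U$ grows like $(\|x\|+\|y\|)^{p-1}$) and the explicit extremal dyadic construction --- are exactly the substantive content of Burkholder's and Os\k{e}kowski's proofs, so either they must be carried out in full or, as the paper does, the theorem should simply be cited.
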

\begin{proof}[Proof of Theorem \ref{thm : mainthm}]
It suffices to prove that $f_{X,p,C_p}^{rc}(0)=0$.
To do so, let us recall that
\begin{equation*}
	f^{rc}(x)
=
	\inf_{\mu\colon  [\mu]=x, \mu\text{ is a laminate}}
	\langle f,\mu\rangle.
\end{equation*}
So it suffices to prove that $\langle f_{X,p,C_p},\mu\rangle\ge 0$
for laminates with barycentre $0$ if $C\gtrsim (p^*-1)$.
Let $\mu$ denote such a laminate, and let $n$ be its order.
By hypothesis, we have
\begin{equation*}
	\sup_{\mathrm{rank}(A)=1}
	\frac{\|Q_X(A)\|}{\|P_X(A)\|}
=
	k
<
	\infty.
\end{equation*}
We claim that $C\ge k(p^*-1)$ suffices.
For simplicity, we assume $k=1$,
the proof in the general case is the same mutatis mutandis.
We now associate to $\mu$ an $M_d(\mathbb{R})$-valued martingale
as in the definition above, which we call $M_i$. 
It is easy to see that
$A_i:= P_{X}(M_i),
B_i:= Q_{X}(M_i)$
are martingales and that $A_0=B_0=M_0=0.$
Moreover, since $\mathrm{rank}(M_{i+1}-M_i)\le 1$ and 
$\|P_{X}(R)\|\ge \|Q_{X}(R)\|$
for all rank-one matrices,
it follows that $B_i$ is differentially subordinated to $A_i$.
Using Burkholder's inequality \eqref{eq : burk},
it follows that if $c_p\ge (p^*-1),$ we have
\begin{equation*}
	c_p\|A_i\|_{L^p(\mathbb P)}
\ge
	\|B_i\|_{L^p(\mathbb P)}
\end{equation*}
for all $i\in \{0,\dots, n\}$ and taking $i=n$ we get  that
   \begin{equation*}
	\langle f,\nu\rangle
=
	\int f\mathrm{d} L_{M_n}
=
	c_p^p\|A_n\|^p_{L^p(\mathrm{d} \mathbb P)}
-
	\|B_n\|^p_{L^p(\mathrm{d} \mathbb P)}
\ge
	0,
\end{equation*} 
where $L_{M_n}$ is the law of the random
variable.
On the other hand, assume that $X^\perp$ contains a rank-one matrix $R$, 
which we assume without loss of generality to have norm $1$.
Then
\begin{equation*}
	f_{X,p,C}^{rc}(0)
\le
	\inf_{t>0}
	\frac{f_{X,p,C}(-tR)
+
	f_{X,p,C}(tR)}{2}
=
	\inf_{t>0}-t^p
=
	-\infty.
\end{equation*}
\end{proof}
\begin{proof}[Proof of Theorem \ref{thm : sharp}]
To prove the other direction,
i.e. $C<c(p^*-1)$ implies $f_{p,c_p}^{rc}(0)=-\infty$,
we focus on a particular class of martingales:
namely, let $f_n$ be any dyadic martingale and $g_n$
a $\pm1$-transform of $f_n$,
i.e. a martingale such that $dg_n=\varepsilon_n df_n$ with $
\varepsilon_n$ a predictable sequence of signs.
Then, we define
\begin{equation*}
	M_n
=
	A f_n
+
	cBg_n.
\end{equation*}
This matrix-valued martingale satisfies $\mathrm{rank}(dM_n)\le 1$ a.e.,
so it is associated to a laminate with barycentre at $0$.
From the sharpness statement in Theorem \ref{thm : burk}
the result follows.
\end{proof}
\begin{remark}
       The problem of determining conditions for the rank-one convexity
    of a function (in particular as a necessary condition for quasiconvexity)
    is a very active area of research with a long history;
 see \cite{Banuelos}, \cite{BaernsteinII1997}, \cite{Iwaniec2002Nonlinear},
 \cite{Astala2012BurkholderMappings} and references therein for a more detailed history of this approach
    in its relation with the Iwaniec-Martin conjecture. The technique of using laminates has
    also been in use, in various forms, for quite some time, see
    \cite{Kinderlehrer1991CharacterizationsGradients}. 
    We particularly highlight Faraco's results with the use of the so-called staircase
    laminates (see \cite{Faraco2003}) and
    \cite{Boros2013}, where a connection
    is noted between laminates and the Burkholder function in the context of studying
    the rank-one convexity of some functions related to the
    Iwaniec-Martin conjecture, which is in a similar
    spirit to ours. Theorem \ref{thm : sharp} was 
    originally developed in a rougher form
     in \cite{Cassese},
    where we applied it to a particular function.
    We believe that the results we have here do not exhaust
    the applicability of the technique, which will be broadened in future work. 
\end{remark}
\section{Korn's first inequality and the quantity \texorpdfstring{$N(d)$}{N(d)}}\label{sec : Chipot}
The goal of this section is to prove the first part of Theorem \ref{thm : solchipot},
which we restate here for the reader's convenience:
\begin{reptheorem}{thm : solchipot}[Part $1$]
\begin{equation}
	d^2
-
	n_{\mathbb R}(d,1)
=
	N(d).
\end{equation}
\end{reptheorem}
From this, we will deduce a sharp lower bound, an asymptotically 
sharp upper bound, we will calculate all 
$N(d)$ up to $19$ and provide ranges for $N(d)$ up to $d=33$ 
(though these results could quite likely be extended further, we
refrain from doing that here).
Before proving the theorem, we note that the
definition can be restated in a way that is slightly
more amenable to our process:
\begin{lemma}
Let $M(d)$ be defined as
\begin{equation*}
	M(d)
:=
	\inf 
	\left\{
		\mathrm{dim}(X)\colon  \forall u\in C^\infty_c(\mathbb{R}^d,\mathbb{R}^d)\  
		\|\nabla u\|_{L^2(\mathbb{R}^d)}
	\lesssim
		\| P_X(\nabla u)\|_{L^2(\mathbb{R}^d)}
	\right\}.
\end{equation*}
Then $M(d)=N(d)$.
\end{lemma}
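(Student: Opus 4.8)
The plan is to prove the two inequalities $M(d)\le N(d)$ and $N(d)\le M(d)$ separately. Both rest on a single elementary observation: a finite collection of linear functionals on $M_d(\mathbb{R})$ carries, up to equivalence of norms, exactly the same information as the orthogonal projection onto the span of the associated vectors, and the inequalities in question are only required to hold up to a multiplicative constant.

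First I would handle $N(d)\le M(d)$. Given a subspace $X\subseteq M_d(\mathbb{R})$ of dimension $m$ for which $\|\nabla u\|_{L^2(\mathbb{R}^d)}\lesssim \|P_X(\nabla u)\|_{L^2(\mathbb{R}^d)}$ holds for all $u\in C^\infty_c(\mathbb{R}^d,\mathbb{R}^d)$, pick an orthonormal basis $e_1,\dots,e_m$ of $X$ and set $\ell_i(A):=\langle A,e_i\rangle$ (Frobenius inner product). Then $\sum_{i=1}^m|\ell_i(A)|^2=\|P_X(A)\|^2$ holds pointwise, hence $\sum_{i=1}^m\|\ell_i(\nabla u)\|_{L^2}^2=\|P_X(\nabla u)\|_{L^2}^2$, so these $m$ functionals witness \eqref{eq : Ndef}; taking the infimum over admissible $X$ gives $N(d)\le M(d)$.

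For the reverse inequality I would start from functionals $\ell_1,\dots,\ell_k$ witnessing \eqref{eq : Ndef}, assemble them into a linear map $L\colon M_d(\mathbb{R})\to\mathbb{R}^k$, $L(A)=(\ell_1(A),\dots,\ell_k(A))$, and take $X:=(\ker L)^\perp$, so that $\mathrm{dim}(X)=\mathrm{rank}(L)\le k$ and $L$ restricts to an injective map on $X$. Since $\ker L=X^\perp$ we have $L(A)=L(P_X(A))$ for all $A$, and since an injective linear map between finite-dimensional spaces is bounded above and below, there are constants $0<c\le C<\infty$ with $c\|P_X(A)\|\le|L(A)|\le C\|P_X(A)\|$; these depend only on the chosen functionals, hence are harmless inside a $\lesssim$. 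Consequently $\sum_{i=1}^k\|\ell_i(\nabla u)\|_{L^2}^2=\|L(\nabla u)\|_{L^2}^2\approx\|P_X(\nabla u)\|_{L^2}^2$, so \eqref{eq : Ndef} is equivalent to $\|\nabla u\|_{L^2}\lesssim\|P_X(\nabla u)\|_{L^2}$; thus $X$ is admissible for $M(d)$ and $M(d)\le\mathrm{dim}(X)\le k$. Minimising over $\ell_1,\dots,\ell_k$ yields $M(d)\le N(d)$, and combining the two bounds proves $M(d)=N(d)$.

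There is no genuine obstacle in this argument; the only points deserving a line of care are that the functionals appearing in the definition of $N(d)$ need not be linearly independent (so $\mathrm{rank}(L)$ may be strictly smaller than $k$, which only helps), and that replacing $L$ by $P_X$ alters $\sum_i\|\ell_i(\nabla u)\|_{L^2}^2$ only by a multiplicative constant, which is immaterial because \eqref{eq : Ndef} is stated up to a constant. From this point on it is legitimate, and notationally more convenient for the algebraic arguments of the section, to work with $M(d)$ in place of $N(d)$.
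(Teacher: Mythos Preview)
Your argument is correct and follows essentially the same route as the paper: both directions rest on the equivalence, up to fixed multiplicative constants, between $\sum_i|\ell_i(A)|^2$ and $\|P_X(A)\|^2$ where $X$ is the span of the representing vectors. The only cosmetic difference is that you phrase the second direction via $X=(\ker L)^\perp$ and $\mathrm{rank}(L)\le k$, whereas the paper takes $X=\mathrm{span}(x_1,\dots,x_m)$ and invokes minimality of a witnessing family to get a basis; your version has the small advantage of not needing to assume the functionals are already minimal.
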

\begin{proof}
    First, let $X$ be a subspace of $M_d(\mathbb{R})$ such that
    $\|\nabla u\|_{L^2(\mathbb{R}^d)}\lesssim \| P_X(\nabla u)\|_{L^2(\mathbb{R}^d)}$
    for all smooth compactly supported functions. Let
    $e_1,\dots, e_{\mathrm{dim}(X)}$ be an orthogonal 
    basis of $X$. Then let $l_i(\cdot):=\langle e_i, \cdot\rangle$.
By construction we have
\begin{equation*}
	\|\nabla u\|_{L^2(\mathbb{R}^d)}^2
\lesssim
	\| P_X(\nabla u)\|_{L^2(\mathbb{R}^d)}^2
=
	\sum \|l_i(\nabla u)\|_{L^2(\mathbb{R}^d)}^2.
    \end{equation*}
It follows that $N(d)\le M(d).$ To prove the other direction,
let $l_1,\dots, l_m$ be a set of linear functionals
witnessing $N(d)=m.$
Select for each of them a vector $x_i\in M_d(\mathbb{R})$
such that $l_i(\cdot)=\langle x_i,\cdot\rangle$.
Define $X:=\text{span}(x_1,\dots, x_m)$. By minimality
of the set $\{l_i\}$, $\{x_1,\dots, x_m\}$
is a basis of $X$. 
By finite dimensionality of $X$ it then follows that
\begin{equation*}
	\| P_X(y)\|^2
\approx_{\mathrm{dim}(X)}
	\sum |\langle x_i, y\rangle|^2,
\end{equation*}
hence
\begin{equation*}
	\|\nabla u\|_{L^2(\mathbb{R}^d)}^2
\le
	\sum \|l_i(\nabla u)\|_{L^2(\mathbb{R}^d)}^2
=
	\sum \|\langle x_i, \nabla u\rangle\|_{L^2(\mathbb{R}^d)}^2
\approx_{\mathrm{dim}(X)}
	\| P_X(\nabla u)\|_{L^2(\mathbb{R}^d)}^2,
\end{equation*}
proving the other direction $N(d)\ge M(d).$
\end{proof}
\begin{proof}[Proof of Part $1$ of Theorem \ref{thm : solchipot}]
The definition can be equivalently  stated as:
let $\mathcal X$ be the class of subspaces $X$ of $M_d(\mathbb{R})$
for which there exists a constant $C=C_X$ such that
\begin{equation*}
 C\|P_X(\nabla u)\|_{L^2(\mathbb{R}^d)}^2\ge \|Q_X(\nabla u)\|_{L^2(\mathbb{R}^d)}^2   
\end{equation*}
holds
for all admissible $u$. In these
terms we have $N(d)=\min\{\mathrm{dim}(X)\colon X\in\mathcal X\}$.
Let us define $f_X\colon M_d(\mathbb{R})\to \mathbb{R}$ as
\begin{equation*}
	f_{C,X}(A)
:=
	C\|P_X(A)\|^2
-
	\|Q_X(A)\|^2.
\end{equation*}
Hence $X$ is admissible if and only
if there exists a $C$ such that
\begin{equation*}
	f_{C,X}^{qc}(0)
=
	0.
\end{equation*}
Since $f_{C,X}$ is a quadratic form, 
this is equivalent to 
\begin{equation*}
	f_{C,X}^{rc}(0)
=
	0.
\end{equation*}
By applying Theorem \ref{thm : mainthm}, 
one obtains that such  $C$ exists if and only if $X^\perp$
avoids rank-one matrices, proving the result.
\end{proof}
\begin{remark}\label{rmk : hist}
    We are of course not the first to observe
    a connection between rank-one
    properties and inequalities such 
    as Korn's. Indeed, results of this kind have been known
    for a long time, see for instance \cite{Chen2017OnIntegrals},
    \cite[Lemma 2.7]{Muller1999VariationalTransitions},
    \cite{Conti2005AFunctions}, \cite{Faraco2022}, \cite{KirchheimJan}.
    Similar
    results also hold in other spaces: Boman \cite{Boman1967PartialSpaces} 
    investigated
    a problem with a method that
    is reminiscent of our
    approach (or, to be 
    more precise, vice versa): in particular, his condition
    $\hat A$ is equivalent to 
    $\mathrm{span}(\hat A)$ being,
    in our vocabulary, admissible. In 
    this light, our result can be seen
    as a $W^{1,p}$ version of 
    his Theorem 1. \par
    It seems however that
    this algebraic characterisation
    has never been taken full advantage of as we do here.\end{remark}
\begin{remark}
An alternative proof can easily be obtained by 
using ellipticity of the operator $T_X(u):=P_X(\nabla u)$
(and indeed it is not hard to see that 
Theorem 3.2 in \cite{Chipot2021OnType}
is essentially the ellipticity condition for $T_X$).
Our proof, however, has the advantage of allowing one to
control the constants associated with the inequalities,
whereas using ellipticity only ensures the existence of such
constants.
\end{remark}
The number $n(d,1)$ can also be interpreted in a different
way, which will prove itself to be more amenable to our study.
\begin{definition}
Let $d,k\in \mathbb N$, and let 
$\mathcal B(\mathbb K^d\times \mathbb K^d, \mathbb K^k)$
denote the set of bilinear maps $f\colon\mathbb K^d\times \mathbb K^d\to \mathbb K^k$.
We say  that such a map is nonsingular
if $f(a,b)=0$ implies that 
$a=0$ or $b=0$.
If $\mathcal B_{ns}(\mathbb{R}^d\times 
\mathbb{R}^d,\mathbb R^k)$ denotes the
set of nonsingular bilinear mappings, 
we define $g(d)$ as
\begin{equation*}
	g_{\mathbb K}(d)
:=
	\min
	\left\{
		k\colon 
		\mathcal B_{ns}(\mathbb K^d\times \mathbb K^d, \mathbb K^k)
	\neq
		\emptyset
	\right\}.
\end{equation*}
\end{definition}
\begin{proposition}
For each $d\in \mathbb{N}$ we have
\begin{equation*}
	d^2
-
	n_{\mathbb K}(d,1)
=
	g_{\mathbb K}(d).
    \end{equation*}
\end{proposition}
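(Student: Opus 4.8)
The plan is to dualise the problem: identify a $k$-tuple of $d\times d$ matrices over $\mathbb K$ with a bilinear map $\mathbb K^d\times\mathbb K^d\to\mathbb K^k$, and translate nonsingularity of that map into a rank-one avoidance condition for the orthogonal complement of the span of those matrices. Throughout, $W^\perp$ should be understood with respect to the \emph{symmetric} bilinear pairing $\langle A,B\rangle=\sum_{i,j}A_{ij}B_{ij}$ on $M_d(\mathbb K)$, which is nondegenerate over any field; in particular $\dim W^\perp=d^2-\dim W$ and $(W^\perp)^\perp=W$ for every subspace $W\subseteq M_d(\mathbb K)$. (When $\mathbb K=\mathbb C$ this differs from the Hermitian inner product used elsewhere in the paper, but only the dimension count matters.)

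\textbf{Step 1 (the dictionary).} Given matrices $B_1,\dots,B_k\in M_d(\mathbb K)$, define $f\colon\mathbb K^d\times\mathbb K^d\to\mathbb K^k$ by $f(a,b)=(a^TB_1b,\dots,a^TB_kb)$; every $\mathbb K$-bilinear map $\mathbb K^d\times\mathbb K^d\to\mathbb K^k$ arises this way, and uniquely so. Since $a^TB_ib=\langle B_i,ab^T\rangle$, one has $f(a,b)=0$ if and only if $ab^T\in\bigl(\mathrm{span}(B_1,\dots,B_k)\bigr)^\perp$. As a nonzero matrix of rank at most one is exactly one of the form $ab^T$ with $a\neq 0\neq b$, the map $f$ is nonsingular if and only if $W^\perp$ contains no nonzero element of $\mathcal R_1$, where $W:=\mathrm{span}(B_1,\dots,B_k)$.

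\textbf{Step 2 ($g_{\mathbb K}(d)\ge d^2-n_{\mathbb K}(d,1)$).} Take a nonsingular $f\colon\mathbb K^d\times\mathbb K^d\to\mathbb K^{g_{\mathbb K}(d)}$ and let $W$ be the span of its coefficient matrices, so $\dim W\le g_{\mathbb K}(d)$. By Step 1, $W^\perp$ avoids nonzero rank-$\le 1$ matrices, hence $\dim W^\perp\le n_{\mathbb K}(d,1)$; since $\dim W^\perp=d^2-\dim W$, we obtain $g_{\mathbb K}(d)\ge\dim W\ge d^2-n_{\mathbb K}(d,1)$.

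\textbf{Step 3 ($g_{\mathbb K}(d)\le d^2-n_{\mathbb K}(d,1)$).} Let $Y\subseteq M_d(\mathbb K)$ have dimension $n_{\mathbb K}(d,1)$ and contain no nonzero element of $\mathcal R_1$. Put $W:=Y^\perp$, choose a basis $B_1,\dots,B_m$ of $W$ with $m=d^2-n_{\mathbb K}(d,1)$, and form the associated $f\colon\mathbb K^d\times\mathbb K^d\to\mathbb K^m$. By Step 1, $f(a,b)=0$ iff $ab^T\in W^\perp=(Y^\perp)^\perp=Y$; but $ab^T$ has rank at most one and $Y$ has no nonzero such matrix, so $ab^T=0$, i.e. $a=0$ or $b=0$. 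Hence $f$ is nonsingular and $g_{\mathbb K}(d)\le m=d^2-n_{\mathbb K}(d,1)$. Combining Steps 2 and 3 yields the equality.

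I do not expect a genuine obstacle here: the entire content is the bookkeeping of Step 1 together with the duality $\dim W^\perp=d^2-\dim W$ and $(W^\perp)^\perp=W$. The one point that deserves care is to work with a nondegenerate \emph{symmetric bilinear} form on $M_d(\mathbb K)$ valid over an arbitrary field — and, in the case $\mathbb K=\mathbb C$, to resist the Hermitian inner product, since complex conjugation would destroy the bilinearity of $(a,b)\mapsto a^TB_ib$ used in Step 1; granted that, the identity $(Y^\perp)^\perp=Y$ and the dimension formula are standard linear algebra over any field.
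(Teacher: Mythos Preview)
Your proof is correct and follows essentially the same route as the paper's: both arguments pass from a bilinear map $f\colon\mathbb K^d\times\mathbb K^d\to\mathbb K^k$ to its induced linear map on $M_d(\mathbb K)\simeq\mathbb K^d\otimes\mathbb K^d$ and observe that nonsingularity of $f$ is exactly the condition that the kernel of this linear map avoids nonzero rank-one matrices. The only cosmetic difference is that the paper phrases things via the kernel of the induced map and the quotient $M_d(\mathbb K)/X$, whereas you pick coordinates (the coefficient matrices $B_i$) and work with orthogonal complements under the trace pairing; since $\ker L=\mathrm{span}(B_1,\dots,B_k)^\perp$ these are the same subspace, and your care about using the symmetric rather than Hermitian pairing over $\mathbb C$ is exactly the bookkeeping the paper's coordinate-free formulation sidesteps.
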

\begin{proof}[Proof (folklore)]
Note that $f\colon\mathbb K^d\times \mathbb K^d\to \mathbb K^k$
with $k$ being minimal implies $f$ is surjective.
Moreover, note that any such map $f$ bilinear and nonsingular
naturally induces
$L\colon \mathbb K^d\otimes \mathbb K^d\to \mathbb K^{k}$,
so taking $L^{-1}(0)$ provides a subspace
avoiding rank-one matrices, and 
\begin{equation*}
	\mathrm{dim}(M_d(\mathbb K))
-
	\mathrm{dim}(L^{-1}(0))
=
	\mathrm{dim}(\mathbb K^k)
=
	g_{\mathbb K}(d),
\end{equation*}
so 
\begin{equation*}
	d^2
-
	n_{\mathbb K}(d,1)
\le
	g_{\mathbb K}(d).
\end{equation*}
On the other hand given such a dimension-minimal subspace $X$,
one can define $L$ as the natural quotient map
\begin{equation*}
	L\colon 
	\mathbb K^d\otimes \mathbb K^d
\simeq
	M_d(\mathbb K)
\twoheadrightarrow
	M_d(\mathbb K)/X
\simeq
	\mathbb K^{d^2-n_{\mathbb K}(d,1)}
\end{equation*} 
and this implies
\begin{equation*}
	g_{\mathbb K}(d)
\le
	d^2
-
	n_{\mathbb K}(d,1).
\end{equation*}
 \end{proof}
 As we will mostly focus on $g_{\mathbb{R}}(d)$, we will from now on
 write $g(d)$ for $g_{\mathbb R}(d)$.
 \subsection{First consequences}
The function $g(d)$ has been studied in depth
since the $1960$s.
In particular, as we will see, there exist asymptotically
sharp lower and upper bounds for it and we can now
transfer those to $N(d)$. Since some
of the results for $g(d)$ are
rather complex and usually derived with 
techniques from algebraic topology, we first focus on
using analytical techniques.
To start, let us show how one can use this result
to obtain short proofs of some of the results obtained
in \cite{Chipot2021OnType}.
\begin{corollary} $N(d)$ is non-decreasing.
\end{corollary}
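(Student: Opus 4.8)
The plan is to exploit the characterisation $N(d)=d^2-n_{\mathbb R}(d,1)=g(d)$ established above, which reduces monotonicity of $N$ to monotonicity of $g$. So it suffices to show $g(d)\le g(d+1)$ for all $d$, equivalently that a nonsingular bilinear map $\mathbb R^{d+1}\times\mathbb R^{d+1}\to\mathbb R^{k}$ restricts to a nonsingular bilinear map $\mathbb R^{d}\times\mathbb R^{d}\to\mathbb R^{k}$.

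The key step is simply restriction. Given $f\in\mathcal B_{ns}(\mathbb R^{d+1}\times\mathbb R^{d+1},\mathbb R^{k})$, embed $\mathbb R^{d}\hookrightarrow\mathbb R^{d+1}$ as the first $d$ coordinates and set $\tilde f:=f|_{\mathbb R^{d}\times\mathbb R^{d}}$. This is clearly bilinear with values in $\mathbb R^{k}$, and if $\tilde f(a,b)=0$ for $a,b\in\mathbb R^{d}$, then viewing $a,b$ as elements of $\mathbb R^{d+1}$ we get $f(a,b)=0$, so nonsingularity of $f$ forces $a=0$ or $b=0$. Hence $\tilde f$ is nonsingular, so $\mathcal B_{ns}(\mathbb R^{d}\times\mathbb R^{d},\mathbb R^{k})\neq\emptyset$ whenever $\mathcal B_{ns}(\mathbb R^{d+1}\times\mathbb R^{d+1},\mathbb R^{k})\neq\emptyset$, giving $g(d)\le g(d+1)$ and therefore $N(d)\le N(d+1)$.

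Alternatively, one can argue directly at the level of $n_{\mathbb R}(d,1)$ without passing through $g$: if $Y\subseteq M_{d+1}(\mathbb R)$ is a subspace of dimension $n_{\mathbb R}(d+1,1)$ avoiding nonzero rank-one matrices, intersect it with the subspace $M_{d}(\mathbb R)\oplus 0$ (matrices supported on the top-left $d\times d$ block) — but this naive intersection may drop dimension by more than $d^2$, so the cleaner route really is the bilinear-map formulation, or equivalently a direct argument on Korn-type inequalities: a family of functionals $\ell_1,\dots,\ell_k$ on $M_{d+1}$ witnessing the inequality in dimension $d+1$ restricts, via the block embedding of $C^\infty_c(\mathbb R^{d},\mathbb R^{d})\hookrightarrow C^\infty_c(\mathbb R^{d+1},\mathbb R^{d+1})$ (extending a vector field trivially in the extra variable and padding the target), to functionals witnessing it in dimension $d$.

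There is essentially no obstacle here; the only thing to be careful about is bookkeeping in whichever formulation one picks — making sure the restricted objects genuinely land in the same ambient dimension $k$ and that nonsingularity (or the inequality) is inherited rather than merely "morally" inherited. The cleanest writeup uses the bilinear-map reformulation, where the inheritance is immediate.
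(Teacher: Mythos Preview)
Your proof is correct and matches the paper's approach exactly: reduce to showing $g(d)\le g(d+1)$ and obtain this by composing a nonsingular bilinear map $f\colon\mathbb R^{d+1}\times\mathbb R^{d+1}\to\mathbb R^{N(d+1)}$ with the standard inclusion $\iota\colon\mathbb R^{d}\times\mathbb R^{d}\to\mathbb R^{d+1}\times\mathbb R^{d+1}$. The additional alternative routes you sketch are not needed.
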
 
\begin{proof}
The corollary follows from composing the nonsingular bilinear map 
$f\colon\mathbb{R}^{d+1}\times \mathbb{R}^{d+1}\to \mathbb{R}^{N(d+1)}$
with the standard inclusion
$\iota\colon \mathbb{R}^d\times \mathbb{R}^d\to \mathbb{R}^{d+1}\times \mathbb{R}^{d+1}$
defined as $\iota(x,y)=((x,0),(y,0))$.
\end{proof}
\begin{corollary}
For each $d\in\mathbb{N}$ we have
$N(d)\ge d$
\end{corollary}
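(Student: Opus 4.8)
The plan is to use the characterisation $N(d)=d^2-n_{\mathbb R}(d,1)=g(d)$ just established, so that proving $N(d)\ge d$ is equivalent to proving $g(d)\ge d$, i.e. that there is no nonsingular bilinear map $f\colon\mathbb R^d\times\mathbb R^d\to\mathbb R^k$ with $k<d$. Equivalently, in the subspace language, any subspace $Y\subset M_d(\mathbb R)$ of dimension $n_{\mathbb R}(d,1)$ avoiding nonzero rank-one matrices must satisfy $n_{\mathbb R}(d,1)\le d^2-d$. I would argue directly with the bilinear map: suppose $f\colon\mathbb R^d\times\mathbb R^d\to\mathbb R^k$ is nonsingular. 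Fix a nonzero $a\in\mathbb R^d$; then $b\mapsto f(a,b)$ is a linear map $\mathbb R^d\to\mathbb R^k$ which is injective (nonsingularity: $f(a,b)=0$ with $a\neq 0$ forces $b=0$). Hence $k\ge d$. This is the whole argument, and it is short; the first step (invoking $N(d)=g(d)$) is the only place any real content is used, and that content is already proved in the excerpt.

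A second, self-contained route — worth mentioning in case one wants to avoid passing through $g(d)$ — is to argue about the orthogonal complement directly. If $X\in\mathcal X$ with $\dim X=N(d)$, then $Y:=X^\perp$ contains no nonzero rank-one matrix. Now pick any nonzero $v\in\mathbb R^d$ and consider the $d$-dimensional space $R_v:=\{v\otimes w : w\in\mathbb R^d\}$ of rank-$\le 1$ matrices. Since $Y\cap R_v=\{0\}$, we get $\dim Y+\dim R_v\le d^2$, i.e. $\dim Y\le d^2-d$, hence $\dim X=d^2-\dim Y\ge d$. Both routes are essentially the same observation packaged differently; I would present the bilinear-map one because the proposition identifying $N(d)$ with $g(d)$ has just been stated and it reads most cleanly.

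There is no real obstacle here: the statement is a corollary precisely because the algebraic reformulation does all the work. The only thing to be careful about is making sure the quantifiers in ``nonsingular'' are used in the right order — that fixing one argument and varying the other yields an \emph{injective} linear map, not merely one with trivial kernel on a subset — but that is immediate from the definition $f(a,b)=0\implies a=0\text{ or }b=0$. So the proof is a two-line deduction: $N(d)=g(d)$ by the Proposition, and $g(d)\ge d$ because a nonsingular bilinear map restricts, on each nonzero first slot, to an injection $\mathbb R^d\hookrightarrow\mathbb R^{k}$.
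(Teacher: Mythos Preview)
Your proposal is correct and matches the paper's proof exactly: the paper also uses $N(d)=g(d)$ and observes that for a nonsingular bilinear map $f$, the linear map $f(x,\cdot)$ is injective, forcing $k\ge d$. Your alternative subspace argument is a valid repackaging of the same idea but is not in the paper.
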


\begin{proof} It suffices to notice that
given a nonsingular bilinear map
$f\colon\mathbb{R}^d\times \mathbb{R}^d\to \mathbb{R}^k$, $f(x,\cdot)$ is injective, so
$\mathrm{rank}(f(x,\cdot))\ge d$.
\end{proof}
Similarly, we can obtain a somewhat more enlightening proof
that $N(4)=4$:
\begin{corollary}
\begin{align*}
	N(4)
&=
	4,\\
	N(8)
&=
	8.
\end{align*}
\end{corollary}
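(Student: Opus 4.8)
The plan is to combine the lower bound $N(d)\ge d$ established in the corollary just above with matching upper bounds coming from classical division algebras. By Part~1 of Theorem~\ref{thm : solchipot} and the proposition identifying $d^2-n_{\mathbb K}(d,1)$ with $g_{\mathbb K}(d)$, we have $N(d)=g(d)$, so it suffices to exhibit, for $d=4$ and $d=8$, a single nonsingular bilinear map $f\colon\mathbb R^d\times\mathbb R^d\to\mathbb R^d$: this gives $g(d)\le d$, and together with $N(d)=g(d)\ge d$ it forces $N(d)=d$.

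For $d=4$ I would take $f$ to be the multiplication on the quaternions $\mathbb H$. Fixing a linear isomorphism $\mathbb H\cong\mathbb R^4$, the product $(p,q)\mapsto pq$ is $\mathbb R$-bilinear by distributivity of the algebra structure, and since $\mathbb H$ is a division algebra, $pq=0$ implies $p=0$ or $q=0$; hence $f$ is nonsingular and $g(4)\le 4$. For $d=8$ I would replace $\mathbb H$ by the octonions $\mathbb O$: multiplication is again $\mathbb R$-bilinear (an $\mathbb R$-algebra need not be associative for this), and $\mathbb O$ has no zero divisors because its Euclidean norm is multiplicative, $\|xy\|=\|x\|\,\|y\|$, so $xy=0$ forces $x=0$ or $y=0$. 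Identifying $\mathbb O\cong\mathbb R^8$ then yields a nonsingular bilinear map $\mathbb R^8\times\mathbb R^8\to\mathbb R^8$, so $g(8)\le 8$.

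There is essentially no obstacle here: the only external inputs are the existence of the division-algebra structures on $\mathbb R^4$ and $\mathbb R^8$, which are classical. The one point that deserves a word of care is the octonionic case, where one should note that "nonsingular" means absence of zero divisors — which holds thanks to the multiplicativity of the norm on $\mathbb O$ — rather than anything about associativity, which fails for $\mathbb O$. As a remark, the identical argument applied to $\mathbb C$ reproduces $N(2)=2$, so that (in view of the lower bounds stated in the introduction and Hurwitz's theorem on normed division algebras) the full list of dimensions with $N(d)=d$ is exactly $\{1,2,4,8\}$, consistently with the corollaries above.
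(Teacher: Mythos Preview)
Your proof is correct and follows essentially the same route as the paper: invoke the lower bound $N(d)\ge d$, use $N(d)=g(d)$, and exhibit quaternion (resp.\ octonion) multiplication as a nonsingular bilinear map $\mathbb R^d\times\mathbb R^d\to\mathbb R^d$ for $d=4$ (resp.\ $d=8$). Your additional remarks on why the octonionic product is nonsingular despite non-associativity, and the aside on $N(2)$ and Hurwitz's theorem, are fine but not needed for the statement at hand.
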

\begin{proof}
Thanks to the lower bound on $N(d)$, it suffices to prove
$N(4)\le 4,\ N(8)\le 8$.
$N(4)\le 4$ follows by considering the map  $f(x,y)=x\times_{\mathbb H} y$,
where $\times_{\mathbb H}$ denotes the quaternion multiplication.
$N(8)=8$ follows by considering the map $h(x,y)=
x\times_{\mathbb O} y$, i.e. the octonion multiplication.
\end{proof}
The lower bound can actually be strengthened:
\begin{theorem}\label{thm : lowerbound2}
We have
\begin{equation}
	N(d)
>
	 d
\end{equation}
unless $d\in \{1,2,4,8\}$, in which case equality holds.
We also have the general lower bound
\begin{equation*}
	N(d)
\ge
	2^{\lceil \log_2(d)\rceil}
\end{equation*}
for all $d$.
\end{theorem}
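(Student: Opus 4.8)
The plan is to pass to the equivalent quantity $g(d)$: by the Proposition above we have $N(d) = d^2 - n_{\mathbb{R}}(d,1) = g(d)$, so it suffices to bound the minimal dimension $k$ admitting a nonsingular bilinear map $f\colon \mathbb{R}^d\times\mathbb{R}^d\to\mathbb{R}^k$. Two ingredients will be used. The elementary one, already exploited above, is that nonsingularity makes $f(a,\cdot)$ injective for $a\neq 0$, so $k\geq d$. The substantial one is the classical Stiefel--Hopf obstruction: if such an $f$ exists, then $\binom{k}{i}\equiv 0 \pmod 2$ for every $i$ with $k-d < i < d$. (This comes from normalising $f$ on the unit spheres to a map $\mathbb{RP}^{d-1}\times\mathbb{RP}^{d-1}\to\mathbb{RP}^{k-1}$ and pulling back the generator of $H^1(\mathbb{RP}^{k-1};\mathbb{Z}/2)$; in keeping with the paper's policy of minimising algebraic topology I would just cite it.)

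For the general lower bound, write $m = \lceil \log_2 d\rceil$, so that for $d\geq 2$ one has $2^{m-1} < d \leq 2^m$, the case $d=1$ being trivial. Suppose towards a contradiction that $g(d)=k$ with $k\leq 2^m-1$. Since $k\geq d > 2^{m-1}$ we get $2^{m-1} < k < 2^m$, so the binary expansion of $k$ has a $1$ in position $m-1$. Moreover $2^{m-1}$ lies strictly between $k-d$ and $d$: indeed $2^{m-1}<d$, while $k-d \leq 2^m-1-d \leq 2^{m-1}-2 < 2^{m-1}$ using $d\geq 2^{m-1}+1$. Hence the Stiefel--Hopf condition forces $\binom{k}{2^{m-1}}$ to be even, whereas Lucas' theorem gives $\binom{k}{2^{m-1}}\equiv k_{m-1}=1 \pmod 2$ --- a contradiction. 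Therefore $g(d)\geq 2^m = 2^{\lceil \log_2 d\rceil}$, which is the asserted bound.

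For the equality statement, the inclusion $\{1,2,4,8\}\subseteq\{d : N(d)=d\}$ is already witnessed by the multiplications of $\mathbb{R},\mathbb{C},\mathbb{H},\mathbb{O}$ (the last two recorded in the corollary above). For the reverse, suppose $d\notin\{1,2,4,8\}$. If $d$ is not a power of $2$, then $2^{\lceil\log_2 d\rceil} > d$, and the bound just established gives $N(d) > d$. If $d = 2^m$ with $m\geq 4$, then an equality $N(d)=g(d)=d$ would supply a nonsingular bilinear map $\mathbb{R}^d\times\mathbb{R}^d\to\mathbb{R}^d$, i.e.\ a (not necessarily associative or unital) real division algebra structure on $\mathbb{R}^d$; this is impossible by the Bott--Milnor--Kervaire theorem, according to which such algebras occur only in dimensions $1,2,4,8$. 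In all cases $N(d) > d$, completing the proof.

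The two genuine obstacles are the imported topological theorems: the Stiefel--Hopf binomial obstruction for the general bound, and the Bott--Milnor--Kervaire classification (equivalently, Adams' solution of the Hopf invariant one problem) for ruling out the powers of $2$ beyond $8$ --- there is no elementary substitute for the fact that $\mathbb{R}^{16}$ carries no division algebra. Everything else is routine: the reduction to $g(d)$ is the Proposition above, and the step from the Stiefel--Hopf congruences to the clean bound $2^{\lceil\log_2 d\rceil}$ is just the observation, via Lucas' theorem, that $\binom{k}{2^{m-1}}$ is odd whenever $2^{m-1}\leq k < 2^m$.
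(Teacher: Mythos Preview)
Your proof is correct and follows essentially the same route as the paper: the Stiefel--Hopf obstruction (binomial congruences from the induced map on $\mathbb{RP}$-cohomology) for the general lower bound, and the Bott--Milnor--Kervaire dimension restriction on real division algebras for the equality statement. The only presentational differences are that the paper actually sketches the cohomology argument (Künneth, the ring structure of $H^*(\mathbb{RP}^{n};\mathbb{Z}_2)$, and the relation $(X+Y)^k\in(X^d,Y^d)$) rather than citing it, whereas you propose to cite it; conversely, you supply the explicit Lucas-theorem verification that $d\circ d\ge 2^{\lceil\log_2 d\rceil}$, which the paper leaves as ``one easily checks''. Your case split (non-powers of two via the general bound, powers of two via division algebras) is a harmless reorganisation of the paper's argument, which applies the division-algebra classification uniformly to all $d$ with $N(d)=d$.
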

\begin{proof}
Assume $N(d)=d$ and let $f$ be a nonsingular bilinear map
that witnesses it.
Then  $(\mathbb{R}^d, +, f)$ is a real 
finite dimensional division algebra.
The only such algebras (see \cite[corollary 1]{Bott})
 have dimensions $\{1,2,4,8\}$ and the first
result follows.
The second result is proved as follows:
such a nonsingular bilinear map $f$ induces a map between real projective spaces
\begin{equation*}
	\tilde f\colon
	\mathbb{R}\mathbb P^{d-1}
\times
	\mathbb{R} \mathbb P^{d-1}
\to
	\mathbb{R}\mathbb P^{k-1},
\end{equation*}
so we have the following morphism between cohomologies:
\begin{equation*}
    \tilde f^*\colon 
    H(\mathbb R\mathbb P^{k-1}, \mathbb Z_2)\to 
	H(\mathbb R\mathbb P^{d-1}\times \mathbb{R} \mathbb P^{d-1}, \mathbb Z_2).
\end{equation*}
We recall that thanks to Künneth's theorem
(see \cite[Theorem 3.15]{hatcher2002algebraic}), we have
\begin{equation*}
H(\mathbb R\mathbb P^{d-1}\times \mathbb{R} \mathbb P^{d-1}, \mathbb Z_2)\simeq
H(\mathbb R\mathbb P^{d-1}, \mathbb Z_2)\otimes H(\mathbb R\mathbb P^{d-1}, \mathbb Z_2)
\end{equation*}
Using the fact that $H(\mathbb R\mathbb P^d, \mathbb Z_2)
\simeq \mathbb Z_2[X]/(X^{d+1})$
(\cite[Theorem 3.19]{hatcher2002algebraic}), we can equivalently write
\begin{equation*}
	\tilde f^*\colon 
	\mathbb Z_2[Z]/(Z^{k})
    \to 
    \mathbb Z_2[X]/(X^{d})
\otimes
	\mathbb Z_2[Y]/(Y^{d})\simeq \mathbb Z_2[X,Y]/(X^d,Y^d),
\end{equation*}
where $(X^d,Y^d)$ is the ideal
generated by $X^d, Y^d$.
By degree preservation
it follows that $\tilde f^*(Z)=aX+bY$ 
and by bilinearity $a=b=1$. We hence obtain that
\begin{equation*}
	(X+Y)^{k}
\in
	(X^d,Y^d),
\end{equation*}
where $(X^d, Y^d)$ is the ideal generated by $X^d, Y^d.$ By the binomial
theorem, it follows that this happens
if and only if
\begin{equation*}
    k\ge \min \left\{t\colon  \binom{t}{j}\equiv_20\text{ for all
    }t-d<j<d\right\}
\end{equation*}
The minimal such $k$ 
(which coincides with the right hand side) is denoted $d\circ d$ and
as one easily checks the binomial
characterisation implies $d\circ d\ge 2^{\lceil\log_2(d)\rceil}$.
\end{proof}
\begin{remark}
    A more detailed proof of $g(d)\ge 
    d\circ d$ is presented in
    \cite[Theorem 3.21]{hatcher2002algebraic} 
    \footnote{The theorem is only stated for $d=2^n$ but the method is easily
    generalised}.
    For  more details on this estimate,
    its extension to other fields
    etc, we refer the reader to
    \cite[Chapter 12]{shapiro}, and
    to \cite{Eliahou2005OldFunctions}
    for more information on the Hopf-Stiefel
    function $m\circ d$.
\end{remark}
\begin{remark}
The lower bound $g(d)\ge d\circ d$ is by no means sharp.
Indeed, if we define $\tilde g(d)$ as the minimal 
dimension such that a continuous nonsingular map
$f\colon\mathbb{R}^d\times \mathbb{R}^d\to \mathbb{R}^{\tilde g(d)}$
satisfying $f(x,-y)=-f(x,y)$ and $f(-x,y)=-f(x,y)$
exists\footnote{Such a map is called bi-skew} (without requiring linearity), the same bound
holds by the same proof.
\end{remark}
This result allows us to fully calculate $N(d)$ for $d\le 8$,
as shown in table \ref{tab:n8}.
\begin{table}[ht]
\centering
\caption{First $8$ values of $N(d)$}
\label{tab:n8}
\begin{tabular}{|c|c|}
\hline
\hline
$d$ & $N(d)$ \\
\hline
1 & 1  \\
2 & 2 \\
3 & 4  \\
4 & 4 \\
5 & 8 \\
6 & 8 \\
7 & 8 \\
8 & 8 \\
\hline
\end{tabular}
\end{table}

We turn to the upper bound on $N(d)$:
\begin{theorem}
For $d, k\in \mathbb{N}$ we have
\begin{equation*}
	N(d)
\le
	2d-1
\end{equation*}
and indeed more generally
\begin{equation*}
	N(kd)
\le
	(2k-1)N(d).
\end{equation*}
\end{theorem}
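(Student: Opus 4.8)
The plan is to work with the equivalent formulation $N(d)=g(d)=\min\{k:\mathcal B_{ns}(\mathbb R^d\times\mathbb R^d,\mathbb R^k)\neq\emptyset\}$ and to build a nonsingular bilinear map out of one in lower dimensions. The bound $N(d)\le 2d-1$ is the special case $k=1$, $N(1)=1$ (via the ordinary product on $\mathbb R$), so it suffices to prove the general statement $g(kd)\le(2k-1)g(d)$. So let $f\colon\mathbb R^d\times\mathbb R^d\to\mathbb R^{g(d)}$ be nonsingular and bilinear. I would identify $\mathbb R^{kd}$ with $\mathbb R^d\otimes\mathbb R^k$ (equivalently, with $d\times k$ matrices, or with polynomials of degree $<k$ with coefficients in $\mathbb R^d$), so that a vector $x\in\mathbb R^{kd}$ is written as $x=\sum_{i=0}^{k-1}x_i t^i$ with $x_i\in\mathbb R^d$.

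The key step is to define, for $x=\sum_i x_it^i$ and $y=\sum_j y_jt^j$, the product
\begin{equation*}
    F(x,y):=\sum_{m=0}^{2k-2}\Bigl(\sum_{i+j=m}f(x_i,y_j)\Bigr)t^m,
\end{equation*}
i.e. the convolution of the coefficient sequences, with $f$ playing the role of scalar multiplication; this lands in $(\mathbb R^{g(d)})^{2k-1}\cong\mathbb R^{(2k-1)g(d)}$ and is manifestly bilinear. I then need to check $F$ is nonsingular: suppose $F(x,y)=0$ with $x\neq0$, $y\neq0$. Let $i_0$ be the smallest index with $x_{i_0}\neq0$ and $j_0$ the smallest with $y_{j_0}\neq0$. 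Looking at the coefficient of $t^{i_0+j_0}$ in $F(x,y)$, every term $f(x_i,y_j)$ with $i+j=i_0+j_0$ has either $i<i_0$ (so $x_i=0$) or $j<j_0$ (so $y_j=0$) except for the single term $f(x_{i_0},y_{j_0})$; hence that coefficient equals $f(x_{i_0},y_{j_0})$, which is nonzero by nonsingularity of $f$ — contradiction. So $F$ is nonsingular, giving $g(kd)\le(2k-1)g(d)$, and with $d\mapsto$ anything and the case $d=1$ we get both displayed inequalities (using also that, by the reduction lemma, $N=M=g$).

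The main obstacle is essentially bookkeeping rather than mathematics: making sure the identification $\mathbb R^{kd}\cong\mathbb R^d\otimes\mathbb R^k$ is set up so that $F$ really is $\mathbb R$-bilinear (not just bilinear over the ``ring'' one is tempted to imagine) and that the target dimension is exactly $(2k-1)g(d)$ and not, say, something larger coming from an overcounting of cross terms. One should also note explicitly that this argument does not require $f$ to be associative or unital — only bilinear and nonsingular — which is why it applies to $N(d)$ and not merely to dimensions admitting a division algebra structure; in particular, iterating with $d$ arbitrary rather than $d=1$ is what will later be needed to beat $2d-1$ using the good values $N(2)=2$, $N(4)=4$, $N(8)=8$. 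Finally, it is worth remarking that this is the polynomial-multiplication trick underlying the classical Hopf–Stiefel constructions, so one may alternatively cite that literature, but the short self-contained argument above seems preferable for the analyst readership the paper targets.
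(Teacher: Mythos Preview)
Your proof is correct and follows essentially the same route as the paper: both construct the nonsingular bilinear map on $\mathbb R^{kd}\times\mathbb R^{kd}$ by convolving coefficient sequences via a given nonsingular $f\colon\mathbb R^d\times\mathbb R^d\to\mathbb R^{g(d)}$, exactly your $F(x,y)_m=\sum_{i+j=m}f(x_i,y_j)$. Two small remarks: your phrase ``the special case $k=1$'' is a slip --- you mean $d=1$ (as you correctly write later), and your explicit lowest-index argument for nonsingularity is a welcome addition that the paper leaves implicit.
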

\begin{proof}
It suffices to prove that $g_{\mathbb K}(d)\le 2d-1$. 
To do so, we identify $\mathbb K^d\simeq \mathbb K_{d-1}[X]$,
the set of polynomials of degree $\le d-1$ over $\mathbb{K}$,
and then define $f$ as polynomial multiplication 
$
	\mathbb K_{d-1}[X]
\times 
	\mathbb K_{d-1}[X]
\to
	\mathbb K_{2d-2}[X]
\simeq
	\mathbb K^{2d-1}$. 
To prove the upper bound for $N(kd)$,
namely $N(kd)\le (2k-1)N(d)$,
it suffices to extend the previous polynomial multiplication trick
 to $(\mathbb{R}^d, f)$, where $f\colon\mathbb{R}^d\times \mathbb{R}^d\to \mathbb{R}^{g(d)}$ is a 
nonsingular bilinear map. 
More precisely, let us first identify 
$\mathbb{R}^{kd}\simeq \ell_2^k(\mathbb{R}^d)$,
the set of sequences
$(x_1,\dots, x_k)$ with each $x_i\in \mathbb{R}^d$
and define the map 
$
	h\colon
	\ell_2^k(\mathbb{R}^d)
\times
	\ell_2^k(\mathbb{R}^d)
\to
	\ell_2^{2k-1}(\mathbb{R}^{g(d)})$
as
\begin{equation*}
	h((x_1,\dots, x_k),(y_1,\dots, y_k))_l
=
	\sum_{i+j=l} f(x_i,y_j). 
\end{equation*}
To obtain the explicit inequality we claimed,
it suffices to notice that in our proof of equality of $N(d)$ and 
$g_{\mathbb{R}}(d)$, we can take $X=\ker(\tilde f),$
where $\tilde f$ is the map induced by $f$ on $M_d(\mathbb{R})
\simeq \mathbb{R}^d\otimes \mathbb{R}^d$. Making this explicit in our
case proves the inequality.
\end{proof}
As we will see later, the estimate $g_{\mathbb{R}}(d)\le 2d-1$ is not optimal.
On the other hand, the bound $g_{\mathbb K}(d)\le 2d-1$
cannot be improved in general:
\begin{proposition}\label{prop : gclosed}
Let $\mathbb K$ be an algebraically closed field. Then
\begin{equation*}
g_{\mathbb K}(d)=2d-1
\end{equation*} 
\end{proposition}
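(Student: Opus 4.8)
The plan is to prove the two inequalities $g_{\mathbb K}(d)\le 2d-1$ and $g_{\mathbb K}(d)\ge 2d-1$ separately. The upper bound has already been established in the preceding theorem (the polynomial multiplication trick: identify $\mathbb K^d\simeq\mathbb K_{d-1}[X]$ and multiply, landing in $\mathbb K_{2d-2}[X]\simeq\mathbb K^{2d-1}$), and this argument makes no use of algebraic closedness, so it transfers verbatim. Hence the entire content of the proposition is the lower bound $g_{\mathbb K}(d)\ge 2d-1$, equivalently $n_{\mathbb K}(d,1)\le d^2-(2d-1)=(d-1)^2$: I must show that over an algebraically closed field, any linear subspace $Y\subseteq M_d(\mathbb K)$ of dimension at least $(d-1)^2+1$ must contain a nonzero matrix of rank $\le 1$, i.e. a nonzero rank-one matrix.

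First I would set up the incidence/dimension count. The variety $\mathcal R_{\le 1}:=\mathcal R_1\cup\{0\}$ of matrices of rank at most $1$ is the affine cone over the Segre variety $\mathbb P^{d-1}\times\mathbb P^{d-1}\hookrightarrow\mathbb P^{d^2-1}$, so it is an irreducible closed subvariety of $M_d(\mathbb K)$ of dimension $2d-1$ (the $d^2$ entries of a rank-one matrix $a\otimes b$ are parametrised by $a,b\in\mathbb K^d$ modulo the one-dimensional scaling $(a,b)\mapsto(\lambda a,\lambda^{-1}b)$, giving $d+d-1=2d-1$). Now let $Y\subseteq M_d(\mathbb K)\cong\mathbb A^{d^2}$ be a linear subspace with $\dim Y\ge (d-1)^2+1=d^2-(2d-1)+1$. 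Both $Y$ and $\mathcal R_{\le 1}$ are closed subvarieties of affine $d^2$-space containing the origin, with
\[
\dim Y+\dim\mathcal R_{\le 1}\ \ge\ \bigl(d^2-(2d-1)+1\bigr)+(2d-1)\ =\ d^2+1\ >\ d^2 .
\]
By the affine dimension theorem (the standard fact that over an algebraically closed field two closed subvarieties $V,W$ of $\mathbb A^n$ with $\dim V+\dim W\ge n$ have nonempty intersection of dimension $\ge\dim V+\dim W-n$ through any common point — here applied at the origin, which lies on both), the intersection $Y\cap\mathcal R_{\le 1}$ has dimension $\ge 1$, hence contains a point other than $0$. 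That point is a nonzero matrix of rank $\le 1$ lying in $Y$, which is exactly what "$Y$ does not avoid $\mathcal R_1$" means, so $n_{\mathbb K}(d,1)\le(d-1)^2$ and $g_{\mathbb K}(d)\ge 2d-1$. Combined with the upper bound this gives equality.

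The main obstacle is purely a matter of being careful about which form of the dimension theorem I invoke: the cleanest route is the projective version applied to $\mathbb P(Y)$ and the Segre variety inside $\mathbb P^{d^2-1}$ (projective varieties of complementary-or-greater dimension in $\mathbb P^n$ always meet, no basepoint hypothesis needed), then cone back to the affine statement; alternatively one uses the affine version through the common point $0$. Both require algebraic closedness essentially — over $\mathbb R$ the analogous intersection can be empty (indeed $g_{\mathbb R}(2)=2\ne 3$), which is precisely why the hypothesis is needed and why the result is false without it. A secondary point worth a sentence is that one should note $\mathcal R_{\le1}$ really has dimension exactly $2d-1$ (not less), which is immediate from the Segre parametrisation being generically finite — in fact generically one-to-one after removing the scaling — so no collapse occurs. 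No essentially new ideas beyond these standard facts from algebraic geometry are needed.
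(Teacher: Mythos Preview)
Your proposal is correct and follows essentially the same route as the paper: pass to projective space, note that $\mathbb P(\mathcal R_1)$ is the Segre variety of dimension $2d-2$ in $\mathbb P^{d^2-1}$, and invoke the projective intersection theorem (two closed subvarieties of $\mathbb P^n$ of complementary-or-greater dimension must meet) to force $\mathbb P(Y)\cap\mathbb P(\mathcal R_1)\neq\emptyset$ once $\dim Y\ge (d-1)^2+1$. Your discussion of the affine versus projective formulation and the dimension count for the Segre variety is more explicit than the paper's, but the underlying idea is identical.
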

\begin{proof}[Proof (Folklore)]
To see this, it suffices to argue that given a subspace 
$V\subset M_d(\mathbb K)$ that avoids $\mathcal R_1$, then
$\mathbb P(V)\subset \mathbb P(M_d(\mathbb K))$ avoids
$\mathbb P(\mathcal R_1)$.
Since $\mathrm{dim}(\mathbb P(M_d(\mathbb K)))=d^2-1$
and $\mathbb P(\mathcal R_1)$ has dimension $2d-2,$
the result follows from the fact that the intersection
of two projective varieties $X,Y$ in $\mathbb P^d$
cannot be empty if $\mathrm{dim}(X)+\mathrm{dim}(Y)\ge d$
(see \cite[Theorem 7.2]{hartshorne1977ag}).
\end{proof}
The above upper bound $N(d)\le 2d-1$
can be turned into an explicit inequality:
\begin{theorem}\label{thm : KornHankel}
 Let $\mathcal H(d)$ be the set of Hankel matrices in $M_d(\mathbb{R})$,
 i.e. the set of matrices that are constant on skew-diagonals.
Then for all $u\in C^\infty_c(\mathbb{R}^d,\mathbb{R}^d)$
\begin{equation}\label{eq : Hankel-Korn}
	\|Q_{\mathcal H(d)}(\nabla u)\|_{L^2(\mathbb{R}^d)}
\le
	C(d)\|P_{\mathcal H(d)}(\nabla u)\|_{L^2(\mathbb{R}^d)}.
\end{equation}
\end{theorem}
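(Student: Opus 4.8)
The plan is to deduce Theorem \ref{thm : KornHankel} from the general machinery already assembled, by verifying that the space $X=\mathcal H(d)$ of Hankel matrices satisfies the hypothesis of Part 1 of Theorem \ref{thm : solchipot} (equivalently of Theorem \ref{thm : mainthm}): namely that $\mathcal H(d)^\perp$ contains no non-zero rank-one matrix. Once this is known, Theorem \ref{thm : mainthm} (applied with $p=2$) yields a finite constant $C(d)$ for which the function $f_{C(d),\mathcal H(d)}$ has real-valued rank-one convex envelope, which — since $f$ is a quadratic form, so that rank-one convexity, quasiconvexity and $f\ge 0$ on rank-one matrices coincide, and $f^{qc}(0)=0$ translates via \eqref{eq : qcenv} into exactly the coercivity inequality \eqref{eq: coerc} — is precisely the asserted estimate \eqref{eq : Hankel-Korn}.

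So the real content is the algebraic claim that $\mathcal H(d)^\perp\cap\mathcal R_1=\emptyset$. First I would identify $\mathcal H(d)^\perp$ explicitly: under the Frobenius inner product, $\langle A,H\rangle=\sum_{i,j}A_{ij}H_{ij}$, and since a Hankel matrix is determined freely by its $2d-1$ skew-diagonal values $h_0,\dots,h_{2d-2}$ with $H_{ij}=h_{i+j}$, the condition $\langle A,H\rangle=0$ for all Hankel $H$ is equivalent to $\sum_{i+j=k}A_{ij}=0$ for each $k=0,\dots,2d-2$; that is, $\mathcal H(d)^\perp$ is the space of matrices all of whose skew-diagonal sums vanish (the ``anti-Toeplitz-trace-free'' matrices). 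Now I would connect this to polynomial multiplication exactly as in the proof of $g_{\mathbb K}(d)\le 2d-1$ above: identifying $\mathbb R^d\simeq\mathbb R_{d-1}[X]$ and $M_d(\mathbb R)\simeq\mathbb R^d\otimes\mathbb R^d$, the bilinear multiplication map $f\colon\mathbb R_{d-1}[X]\times\mathbb R_{d-1}[X]\to\mathbb R_{2d-2}[X]$ sends a rank-one matrix $a\otimes b$ (i.e. the pair of polynomials $p_a,p_b$) to the product polynomial $p_a p_b$, whose coefficients are precisely the skew-diagonal sums of $a\otimes b$; hence $\ker(\tilde f)=\mathcal H(d)^\perp$. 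A rank-one matrix in $\mathcal H(d)^\perp$ would then be a pair of non-zero polynomials of degree $\le d-1$ whose product is the zero polynomial, which is impossible since $\mathbb R[X]$ is an integral domain. This proves $\mathcal H(d)^\perp\cap\mathcal R_1=\emptyset$.

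With this in hand the theorem follows immediately: by Theorem \ref{thm : mainthm} there is a constant $C=C(d)$ (indeed one may take $C(d)=B$ with $B^{-1}=\inf_{\mathrm{rank}(A)=1}\|P_{\mathcal H(d)}(A)\|/\|Q_{\mathcal H(d)}(A)\|$, this infimum being positive exactly because $\mathcal H(d)^\perp$ avoids rank-one matrices and the rank-one variety intersected with the unit sphere is compact) such that $f_{C(d),\mathcal H(d)}^{rc}(0)=0$, hence $f_{C(d),\mathcal H(d)}^{qc}(0)=0$, hence \eqref{eq: coerc} with $X=\mathcal H(d)$ and $p=2$, which is \eqref{eq : Hankel-Korn}. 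I do not expect any serious obstacle here — the only step requiring a moment of care is the bookkeeping identifying $\mathcal H(d)^\perp$ with the kernel of the polynomial-multiplication tensor and matching skew-diagonal sums to product coefficients; the positivity of $C(d)$ and the translation between the quadratic-form, envelope and $L^2$ formulations are all standard and already recorded in Section \ref{sec : calcvar} and in the proof of Part 1 of Theorem \ref{thm : solchipot}.
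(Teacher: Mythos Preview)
Your proposal is correct and follows essentially the same route as the paper: the paper presents Theorem~\ref{thm : KornHankel} as the explicit inequality obtained by taking the nonsingular bilinear map in the proof of $N(d)\le 2d-1$ to be polynomial multiplication, so that the induced linear map $\tilde f$ on $M_d(\mathbb R)$ has $\ker(\tilde f)=\mathcal H(d)^\perp$, which therefore avoids rank-one matrices. Your write-up simply makes the identification $\ker(\tilde f)=\mathcal H(d)^\perp$ (via skew-diagonal sums being convolution coefficients) and the passage through Theorem~\ref{thm : mainthm}/\ref{thm : solchipot} more explicit than the paper does, but the argument is the same.
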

For example, in $3$ dimensions the inequality becomes:
\begin{align*}
\|\nabla u\|_{L^2(\mathbb{R}^3)}^2
&\lesssim 
\|\partial_1 u_1\|_{L^2(\mathbb{R}^3)}^2+\|\partial_2 u_1+\partial_1 u_2\|_{L^2(\mathbb{R}^3)}^2+\|\partial_2 u_2\|_{L^2(\mathbb{R}^3)}^2\\ &+\|\partial_3 u_2+\partial_2 u_3\|_{L^2(\mathbb{R}^3)}^2+\|\partial_3 u_1+\partial_2 u_2+\partial_1 u_3\|_{L^2(\mathbb{R}^3)}^2.
\end{align*}
This improves on the standard $\mathrm{Sym}$ estimate
by using $\|\partial_3 u_1+\partial_2 u_2+\partial_1 u_3\|_{L^2(\mathbb{R}^3)}^2$ instead of 
$\|\partial_3 u_1+\partial_1 u_3\|_{L^2(\mathbb{R}^3)}^2
+\|\partial_2 u_2\|_{L^2(\mathbb{R}^3)}^2$.
\subsection{Improved estimates on \texorpdfstring{$g(d)$}{g(d)}}\label{sec : hardertopalg}
We now return to the problem of describing the behaviour
of $N(d)$, that is, that of $g(d)$. 
As we have mentioned before, we have the bounds $2^{\lceil \log_2(d)\rceil}\le 
g(d)\le 2d-1$ (for $d>1$, of course).
First, we mention the following
improved upper bound on $g(d),$
essentially due to 
\cite{lam1968construction}.
\begin{proposition}\label{prop : bindisj}
    Let $d\ge 2$. Then $g(d)\le 2d-2$.
\end{proposition}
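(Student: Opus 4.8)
The plan is to show $g(d)\le 2d-2$ for $d\ge 2$ by exhibiting, for each such $d$, a subspace $X\subset M_d(\mathbb R)$ of dimension $d^2-(2d-2)$ that avoids the nonzero rank-one matrices (equivalently, by the Proposition, a nonsingular bilinear map $\mathbb R^d\times\mathbb R^d\to\mathbb R^{2d-2}$). The construction follows Lam's idea of saving one coordinate over the naive polynomial-multiplication bound $2d-1$ by carefully choosing a different target space. First I would recall the polynomial-multiplication map $f_0\colon \mathbb R_{d-1}[X]\times\mathbb R_{d-1}[X]\to\mathbb R_{2d-2}[X]$ from the previous theorem, which is nonsingular because $\mathbb R[X]$ is a domain (a product of two nonzero polynomials of degree $\le d-1$ is nonzero, with degree $\le 2d-2$). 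This gives $2d-1$ coordinates, indexed by the monomials $1,X,\dots,X^{2d-2}$.

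Next I would explain how to delete one coordinate. The point is that one cannot simply project away a coordinate of $f_0$, since the quadratic polynomials $pq$ as $(p,q)$ range over all pairs may hit, say, a pure monomial $cX^k$, and dropping that coordinate would destroy nonsingularity. Instead, Lam's observation is to compose $f_0$ with a cleverly chosen \emph{linear} surjection $L\colon \mathbb R^{2d-1}\to\mathbb R^{2d-2}$ (equivalently, to pass to a quotient $\mathbb R_{2d-2}[X]/\langle v\rangle$ by a well-chosen line $\langle v\rangle$) such that $L\circ f_0$ remains nonsingular. Concretely, one identifies $\mathbb R^{2d-1}$ with $\mathbb R[X]/(X^{2d-1})$ and further quotients by the image of a degree-$(2d-2)$ polynomial, e.g. by reducing modulo $X^{2d-1}-a$ for a suitable nonzero scalar $a$, or more precisely by using that over $\mathbb R$ a polynomial of degree $2d-2$ with no real root (for $d\ge 2$ such exist, e.g. a product of irreducible quadratics, or $X^{2d-2}+X^{2d-4}+\cdots+1$ when $d$ is chosen appropriately, with minor case adjustments) generates an ideal $\mathfrak m$ for which $\mathbb R[X]/\mathfrak m$ is a product of fields; then $f(p,q):=pq\bmod\mathfrak m$ is nonsingular because in a product of fields a product of nonzero elements — here the images of $p,q$, which are nonzero since $\deg p,\deg q<\deg(\text{modulus})$ — is nonzero. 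This lands in a space of dimension $2d-2$, giving $g(d)\le 2d-2$.

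The step I expect to be the main obstacle is choosing the modulus (equivalently the line to quotient by) so that (i) the reduction map is injective on polynomials of degree $\le d-1$, which is automatic as long as $\deg(\text{modulus})=2d-2\ge d$, i.e. $d\ge 2$, and (ii) the quotient ring has no zero divisors among the relevant images — this is where one needs $\mathbb R[X]/\mathfrak m$ to be a product of fields, forcing $\mathfrak m$ to be generated by a product of distinct irreducible polynomials over $\mathbb R$; the only subtlety is that over $\mathbb R$, irreducibles have degree $1$ or $2$, so a squarefree degree-$(2d-2)$ modulus always exists (take distinct linear factors, or irreducible quadratics, or a mix), and one must double-check that the images of all nonzero $p,q$ of degree $\le d-1$ are units — which holds precisely because such $p,q$ are nonzero modulo each irreducible factor when the factors are chosen not to divide them; choosing, say, the modulus to be a product of $2d-2$ distinct linear factors $\prod_{i}(X-c_i)$ and noting a nonzero polynomial of degree $\le d-1$ vanishes at at most $d-1<2d-2$ points handles this cleanly. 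Finally I would invoke the Proposition $d^2-n_{\mathbb R}(d,1)=g(d)$ together with Part 1 of Theorem \ref{thm : solchipot} to translate the bound $g(d)\le 2d-2$ into the stated inequality, and remark that the attribution to \cite{lam1968construction} is exactly this quotient-by-a-squarefree-polynomial trick.
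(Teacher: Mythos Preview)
Your concrete construction is incorrect. With $m=\prod_{i=1}^{2d-2}(X-c_i)$, take $p=\prod_{i=1}^{d-1}(X-c_i)$ and $q=\prod_{i=d}^{2d-2}(X-c_i)$: both lie in $\mathbb R_{d-1}[X]\setminus\{0\}$, yet $pq=m\equiv 0$ in the quotient. The slip is conflating ``$\bar p\neq 0$'' with ``$\bar p$ is a unit'' in a product of fields; a nonzero $p$ of degree $\le d-1$ vanishing at $\le d-1$ of the $c_i$ only guarantees the former. More seriously, the entire ``quotient polynomial multiplication by a single line $\langle v\rangle\subset\mathbb R_{2d-2}[X]$'' strategy cannot work for odd $d$: one checks that \emph{every} real polynomial $v$ of degree $\le 2d-2$ factors as $v=pq$ with $\deg p,\deg q\le d-1$ when $d$ is odd (split its real irreducible factors, of degrees $1$ and $2$, into two groups of total degree $\le d-1$; the parity obstruction only appears when $d$ is even and $v$ has no real root). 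So for odd $d$ there is no admissible $v$, and in particular no squarefree modulus works---already $d=3$ is a counterexample to your scheme.

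For even $d$ your idea is salvageable (take $v$ of degree $2d-2$ with no real root, equivalently identify $\mathbb R^d\simeq\mathbb C^{d/2}$ and use complex polynomial multiplication), and indeed the paper notes exactly this simplification in the remark following the proof. But the paper's actual proof for general $d\ge 2$ invokes Lam's Theorem~5.5 to produce a nonsingular bilinear map $\mathbb R^d\times\mathbb R^{d-1+\tau(2d-2,d-1)}\to\mathbb R^{2d-2}$ with $\tau(2d-2,d-1)\ge 1$; this construction is genuinely more intricate than a one-dimensional quotient of scalar polynomial multiplication and is what is needed to cover the odd case.
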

\begin{proof}
We can apply Theorem 5.5 of \cite{lam1968construction} 
(with $h=d-1, k=2d-2$) to obtain
the existence
of a nonsingular bilinear map
\begin{equation*}
    f\colon\mathbb{R}^d\times \mathbb{R}^{d-1+\tau(2d-2,d-1)}
    \to \mathbb R^{2d-2},
\end{equation*}
where $\tau(2d-2,d-1)$ is defined
as follows:
\begin{equation*}
    \tau(2d-2,d-1)=\mathrm{Card}
    \left\{i\colon (d-1)_i=0\ \mathrm{and }\ (2d-2)_i\neq (d-1)_i\right\},
\end{equation*}
with $(a)_i$ denoting the $i$-th binary digit of $a$.
Since $\tau(2d-2,d-1)\neq 0$, the result
follows.
\end{proof}
\begin{remark}
    For $d$ even, the proof can be simplified substantially:
    identify $\mathbb{R}^d\simeq \mathbb{C}^{\frac d2}$
    and then use complex 
    polynomial multiplication.
\end{remark}
While the upper bound seems quite sharp
(being attained infinitely many times), the lower bound leaves much to be desired.
This is not surprising, as our proof only required very basic tools from algebraic
topology. To improve it, one needs some more sophisticated 
topological obstructions. Let us briefly sketch
the idea behind the lower bounds. The main ingredient one needs
is to make use of the connection of the problem with algebraic 
topology: namely, that the existence of a bilinear nonsingular map $f\colon\mathbb{R}^d\times \mathbb{R}^d\to \mathbb{R}^k$ implies the existence of an immersion of $\mathbb{R}\mathbb P^{d-1}$ in $\mathbb{R}^{k-1}$ (see \cite{Petrovic}). Hence $g(d)\ge \text{Imm}(\mathbb R\mathbb P^{d-1})+1,$ where $\mathrm{Imm}(M)$ is the minimal 
$k$ such that $M$ immerses in $\mathbb{R}^k$. 
We can now leverage the many results
concerning $\mathrm{Imm}(\mathbb{R}\mathbb P^d)$ \footnote{The following is by no means an exhaustive list, and we refer
the interested reader to \cite{davis_immtable} for a complete list of the bounds known} to obtain the remaining results:
\begin{theorem}\label{thm : algtop}The following bounds hold (see \cite[Theorem 1.4]{AsteyDavis1980} and \cite[Theorem 4.8]{Milnor} respectively): 
        \begin{align*}
        \mathrm{Imm}(\mathbb{R}\mathbb P^d)&=2d-\mathcal O(\log(d))\\
\mathrm{Imm}(\mathbb{R}\mathbb P^{2^n})&= 2^{n+1}-1
        \end{align*}
        \end{theorem}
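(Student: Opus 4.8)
The plan is to derive both statements by importing essentially verbatim the corresponding results from algebraic topology, since both are stated in the literature in a form that directly gives what we need. For the first bound, I would invoke \cite[Theorem 1.4]{AsteyDavis1980}, which states that $\mathrm{Imm}(\mathbb{R}\mathbb{P}^d) = 2d - \alpha(d) + O(1)$ or, in the weaker form that suffices here, $\mathrm{Imm}(\mathbb{R}\mathbb{P}^d) = 2d - O(\log d)$, where the logarithmic gap comes from the number of ones in the binary expansion of $d$ (which is at most $\lceil \log_2(d+1)\rceil$). The lower bound $\mathrm{Imm}(\mathbb{R}\mathbb{P}^d)\geq 2d - O(\log d)$ is the substantive half and is obtained there via the non-vanishing of certain dual Stiefel--Whitney classes of $\mathbb{R}\mathbb{P}^d$; the matching upper bound $\mathrm{Imm}(\mathbb{R}\mathbb{P}^d)\leq 2d-1$ already follows from the Whitney immersion theorem together with elementary obstruction arguments. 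Concretely, $\bar{w}(\mathbb{R}\mathbb{P}^d) = (1+x)^{-(d+1)}$ in $\mathbb{Z}_2[x]/(x^{d+1})$, and one reads off the top nonzero class: if $2^n \le d < 2^{n+1}$ then $\bar{w}_j \neq 0$ for $j$ up to roughly $2^{n+1}-1-d$ less than the full range, yielding the claimed asymptotics.

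For the second bound, $\mathrm{Imm}(\mathbb{R}\mathbb{P}^{2^n}) = 2^{n+1}-1$, I would cite \cite[Theorem 4.8]{Milnor} (Milnor's computation of the immersion dimension of projective spaces whose dimension is a power of two) directly. The point here is that for $d = 2^n$ the binary expansion has a single one, so the general formula degenerates to an exact equality: the upper bound is Whitney's $2d-1 = 2^{n+1}-1$, and the lower bound $\mathrm{Imm}(\mathbb{R}\mathbb{P}^{2^n}) \geq 2^{n+1}-1$ follows because $\bar{w}_{2^n-1}(\mathbb{R}\mathbb{P}^{2^n}) = \binom{-(2^n+1)}{2^n-1}x^{2^n-1} \neq 0 \pmod 2$, which forces the normal bundle of any immersion into Euclidean space to have rank at least $2^n - 1$, i.e.\ the ambient dimension is at least $2^n + (2^n-1) = 2^{n+1}-1$.

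Since the paper treats the algebraic-topology input as a black box (as announced in the paragraph preceding the theorem, where the topological machinery is deliberately kept to a minimum), there is in fact nothing further to prove: both displayed identities are quotations, and the only ``work'' is to match our normalisation (immersion into $\mathbb{R}^k$, with $\mathbb{R}\mathbb{P}^{d-1}$ rather than $\mathbb{R}\mathbb{P}^{d}$) to the cited statements, which is a trivial reindexing. The main obstacle, such as it is, is purely bibliographic: ensuring that the cited theorems are quoted in the correct form and that the $O(\log d)$ error term is the one actually established in \cite{AsteyDavis1980} (it is — their bound is governed by $\alpha(d)$, the binary digit sum, which is $O(\log d)$). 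No new ideas are required; the theorem is a packaging of known results for use in the immediately following corollaries about $g(d)$ and hence $N(d)$.
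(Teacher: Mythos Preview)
Your proposal is correct and matches the paper's treatment exactly: the theorem is stated purely as a citation of external results (\cite{AsteyDavis1980} and \cite{Milnor}) with no proof given, and you correctly identify that the only content is bibliographic matching. The additional explanatory remarks you give about dual Stiefel--Whitney classes are accurate background but not required, since the paper explicitly treats these bounds as black boxes imported from the immersion literature.
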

It follows that
\begin{equation*}
    N(2^n+1)\ge \mathrm{Imm}(\mathbb{R}\mathbb P^{2^n})+1=2^{n+1}
\end{equation*}
and 
\begin{equation*}
    N(d)\sim 2d,
\end{equation*}
thus proving the remaining claims.
\begin{remark}
It is not known whether in general
$g(d)=\mathrm{Imm}(\mathbb{R}\mathbb P^{d-1})+1$.
Conditional on this equation being
true, the inequality $g_\mathbb{R}(d)\le 2d-2$
is attained if and only if $d=2^n+1$.\par
It seems clear that the problem of
determining additional values of $N(d)$
(and more ambitiously that of
determining a closed form for $N(d)$)
is fundamentally topological rather than analytical,
and one that will require new tools.
\end{remark}
For specific values, more can be said. Namely,  one can get
(see \cite{Adem}, \cite{DominguezLam2021} for proofs) the 
following larger table
\footnote{
For the numbers marked with $*$, 
the precise value is not known
and the table reports the best known bounds.
}:
\begin{table}[ht]
\centering
\caption{Values of $N(d)$}
\begin{tabular}{|cc|cc|cc|}
\hline
\hline
$d$ & $N(d)$ & $d$ & $N(d)$ & $d$ & $N(d)$ \\
\hline
1 & 1 & 12 & 17 & 23 & 39 \\
2 & 2 & 13 & 19 & 24 & 39 \\
3 & 4 & 14 & 23 & *25 & [40,47] \\
4 & 4 & 15 & 23 & *26 & [42,48] \\
5 & 8 & 16 & 23 & *27 & [46,49] \\
6 & 8 & 17 & 32 & *28 & [46,50] \\
7 & 8 & 18 & 32 & *29 & [47,51] \\
8 & 8 & 19 & 33 & *30 & [47,54] \\
9 & 16 & *20 & [33,35] & *31 & [47,54] \\
10 & 16 & 21 & 35 & 32 & 54 \\
11 & 17 & 22 & 39 & 33   & 64   \\
\hline
\end{tabular}
\end{table}
\subsection{The optimal constant \texorpdfstring{$C(X)$}{C(X)}}
While in the standard versions of Korn's 
inequality ($Y=\text{Sym}$ and $\text{Sym}_0$, for example),
one can usually calculate $C$ explicitly and prove
it independent of the dimension $d$, this is not the
case for Korn-Hankel, and indeed it is almost never the case
for inequalities induced by an admissible space $X$.
To see why, we need the following auxiliary lemma:
\begin{lemma}
Let $Z$ be a subspace of $M_d(\mathbb{R})$. Let $u,v$
be two independent random vectors that
 are uniformly distributed on 
 the unit sphere $\mathbb{S}^{d-1}$
and let $R=u\otimes v$. Then
\begin{equation*}
    \mathbb E(\|P_Z(R)\|^2)=\frac{\mathrm{dim}(Z)}{d^2}.
\end{equation*}
\end{lemma}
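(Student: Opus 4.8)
The plan is to compute $\mathbb{E}(\|P_Z(R)\|^2)$ by expanding the squared norm in an orthonormal basis of $Z$ and using the rotational symmetry of the distribution of $R = u \otimes v$. Fix an orthonormal basis $E_1, \dots, E_m$ of $Z$, with $m = \mathrm{dim}(Z)$. Then $\|P_Z(R)\|^2 = \sum_{i=1}^m \langle E_i, R\rangle^2$, so by linearity of expectation it suffices to understand $\mathbb{E}(\langle E, R\rangle^2)$ for an arbitrary fixed matrix $E$ with $\|E\| = 1$, and then sum. Writing $\langle E, R\rangle = \langle E, u\otimes v\rangle = u^t E v$, we are led to compute $\mathbb{E}((u^t E v)^2)$ where $u, v$ are independent and uniform on $\mathbb{S}^{d-1}$.

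The key step is to evaluate $\mathbb{E}((u^t E v)^2) = \mathbb{E}(u^t E v\, v^t E^t u)$. Conditioning on $u$ and using $\mathbb{E}(v v^t) = \frac{1}{d}\mathrm{Id}$ (the standard second-moment identity for the uniform distribution on the sphere, which follows from rotational invariance since $\mathbb{E}(vv^t)$ must be a scalar multiple of the identity with trace $1$), we get $\mathbb{E}((u^t E v)^2) = \frac{1}{d}\,\mathbb{E}(u^t E E^t u)$. Applying the same identity in $u$, this equals $\frac{1}{d}\,\mathrm{Tr}\!\left(\frac{1}{d}\mathrm{Id}\cdot E E^t\right) = \frac{1}{d^2}\mathrm{Tr}(E E^t) = \frac{1}{d^2}\|E\|^2 = \frac{1}{d^2}$. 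Summing over the $m$ basis vectors gives $\mathbb{E}(\|P_Z(R)\|^2) = \frac{m}{d^2} = \frac{\mathrm{dim}(Z)}{d^2}$, as claimed.

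There is no real obstacle here; the only thing requiring a word of care is the justification of $\mathbb{E}(vv^t) = \frac{1}{d}\mathrm{Id}$, which I would dispatch by noting that for any rotation $O$ the vector $Ov$ has the same law as $v$, so $O\,\mathbb{E}(vv^t)\,O^t = \mathbb{E}(vv^t)$ for all $O \in SO(d)$, forcing $\mathbb{E}(vv^t)$ to be a scalar matrix; taking traces and using $\|v\| = 1$ pins the scalar to $1/d$. Note also that the answer is independent of the choice of orthonormal basis, as it must be, and that independence of $u$ and $v$ is used precisely once, when conditioning allows the two sphere-averages to be carried out one at a time.
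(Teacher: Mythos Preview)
Your proof is correct and follows essentially the same route as the paper: both reduce to the one-dimensional case by expanding $\|P_Z(R)\|^2$ along an orthonormal basis of $Z$, and then evaluate $\mathbb{E}\big((u^t E v)^2\big)$ using the identity $\mathbb{E}(uu^t)=\mathbb{E}(vv^t)=\frac{1}{d}\mathrm{Id}$ together with independence of $u$ and $v$. The only cosmetic difference is that you carry out the two averages sequentially by conditioning, whereas the paper packages the same step as a single trace identity $\mathbb{E}\big(\mathrm{Tr}(B^t uu^t B vv^t)\big)=\mathrm{Tr}\big(B^t\mathbb{E}(uu^t)B\,\mathbb{E}(vv^t)\big)$.
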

\begin{proof}
To see this, first notice that since
$
	\|P_Z(A)\|^2
=
	\sum_i \|P_{\mathrm{span}(x_i)}(A)\|^2
$
where the $x_i$ form an orthonormal basis of $Z$,
it suffices to prove the claim for a one-dimensional
space $Z$.
In that case, let $B$ be a unit vector in $Z$ and notice that
\begin{align*}
    \mathbb E(\|P_Z(R)\|^2)&=
     \mathbb E\left(\mathrm{Tr}(vu^tB)^2\right)
    \\&
    =
    \mathbb E(\mathrm{Tr}(u^t B v)^2)
    \\&
    =
    \mathbb E(\mathrm{Tr}(B^t uu^t B vv^t))
    \\&
    =
     \mathrm {Tr}(B^t\mathbb E(uu^t)B \mathbb E(vv^t))
    \\&
    =
   \frac1{d^2}  \mathrm{Tr}(B^tB)
    \\&
    =
    \frac1{d^2}.
\end{align*}
\end{proof}
This lemma allows us to investigate the dimension-dependence
of the constants in inequalities of Korn type as follows:
\begin{proposition}\label{prop : trivupperbound}
    Let $X$ be a subspace of $M_d(\mathbb{R})$
    and assume that, for all $u\in C^\infty_c(\mathbb{R}^d,\mathbb{R}^d)$
    we have
    \begin{equation*}
        \|Q_X(\nabla u)\|_{L^2(\mathbb{R}^d)}\le C(X) \|P_X(\nabla u)\|_{L^2(\mathbb{R}^d)}.
    \end{equation*}
    Then we must have
    \begin{equation*}
        C(X)^2\ge \frac{d^2}{\mathrm{dim}(X)}-1
    \end{equation*}
\end{proposition}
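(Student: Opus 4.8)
The plan is to test the inequality against a family of highly oscillatory functions whose gradients concentrate, in a suitable averaged sense, on a single rank-one matrix, and then average over all rank-one directions using the lemma just proved. Concretely, for a fixed rank-one matrix $R=u\otimes v$ (with $u,v$ unit vectors) I would take $u_\lambda(x)=\lambda^{-1}u\,\varphi(x)\sin(\lambda\, v\cdot x)$ for a fixed $\varphi\in C_c^\infty(\mathbb{R}^d)$, so that $\nabla u_\lambda(x)=\varphi(x)\cos(\lambda\,v\cdot x)\,(u\otimes v)+O(\lambda^{-1})$. Then as $\lambda\to\infty$,
\begin{equation*}
    \|P_X(\nabla u_\lambda)\|_{L^2}^2\longrightarrow \tfrac12\|\varphi\|_{L^2}^2\,\|P_X(R)\|^2,\qquad
    \|Q_X(\nabla u_\lambda)\|_{L^2}^2\longrightarrow \tfrac12\|\varphi\|_{L^2}^2\,\|Q_X(R)\|^2,
\end{equation*}
by the Riemann–Lebesgue-type averaging of $\cos^2(\lambda\,v\cdot x)$ to $\tfrac12$ (here I must be slightly careful when $v\cdot x$ is constant on the support, i.e. never, since $v\neq 0$ and $\varphi$ has full-dimensional support; this is a routine point). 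Plugging into the hypothesised inequality and letting $\lambda\to\infty$ yields the pointwise rank-one inequality $\|Q_X(R)\|^2\le C(X)^2\|P_X(R)\|^2$ for every rank-one $R$; equivalently $\|R\|^2=\|P_X(R)\|^2+\|Q_X(R)\|^2\le (1+C(X)^2)\|P_X(R)\|^2$.

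With this pointwise inequality in hand, I would take $R=u\otimes v$ with $u,v$ independent and uniform on $\mathbb{S}^{d-1}$, so that $\|R\|=1$ deterministically, and take expectations:
\begin{equation*}
    1=\mathbb{E}(\|R\|^2)\le (1+C(X)^2)\,\mathbb{E}(\|P_X(R)\|^2)=(1+C(X)^2)\,\frac{\mathrm{dim}(X)}{d^2},
\end{equation*}
where the last equality is exactly the lemma preceding the proposition. Rearranging gives $C(X)^2\ge \dfrac{d^2}{\mathrm{dim}(X)}-1$, as claimed.

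The main obstacle, such as it is, is the passage from the integral inequality to the pointwise rank-one inequality: one needs to justify that the oscillatory test functions $u_\lambda$ are admissible (they are, being smooth and compactly supported) and that the cross terms and the $O(\lambda^{-1})$ remainder genuinely vanish in the limit, uniformly enough to extract the clean limiting identity for both $P_X$ and $Q_X$ simultaneously. An alternative route that sidesteps test functions entirely is to invoke the already-established equivalence (via Theorem \ref{thm : mainthm}, or directly the quadratic-form characterisation recalled in Section \ref{sec : calcvar}) that the inequality for all $u\in C_c^\infty$ holds with constant $C(X)$ if and only if $f_{C(X)^2,X}(R)=C(X)^2\|P_X(R)\|^2-\|Q_X(R)\|^2\ge 0$ for every rank-one $R$; this gives the pointwise inequality for free, and then the averaging argument above concludes. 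I would present the proof via this second, shorter route, relegating the oscillation argument to a remark, since the quadratic characterisation is the natural tool here and makes the whole proof a two-line computation.
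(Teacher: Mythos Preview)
Your proposal is correct, and the ``second, shorter route'' you single out as the one to present is exactly the paper's proof: invoke the already-established pointwise rank-one inequality $C(X)^2\|P_X(R)\|^2\ge\|Q_X(R)\|^2$, take expectations with $R=u\otimes v$ for $u,v$ independent uniform on $\mathbb{S}^{d-1}$, and apply the preceding lemma. Your oscillatory test-function derivation of the pointwise inequality is a valid self-contained alternative, but the paper (like you) bypasses it by citing the quadratic-form characterisation from Section~\ref{sec : calcvar}.
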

\begin{proof}
We have already seen that the Korn-type inequality holds
with constant $C(X)$ if and only if for all matrices $R$
of rank-one we have
\begin{equation*}
    C(X)^2\|P_X(R)\|^2\ge \|Q_X(R)\|^2.
\end{equation*}
If this holds for all matrices of rank-one, then it must follow that
\begin{equation*}
    \mathbb E\biggl(C(X)^2\|P_X(R)\|^2-\|Q_X(R)\|^2\biggr)\ge 0,
\end{equation*}
where $R$ is the same random variable as in the lemma.
Applying the lemma, the result follows.
\end{proof}
The previous proposition leaves open the question
of the sharpness of this bound. By applying concentration
inequalities, we prove that the bound is almost 
sharp for most randomly chosen $X$, 
provided we stay away from the critical range $\mathrm{dim}(X)\approx d$.
\begin{theorem}\label{thm : optimalconcentration}
    Let $k,d\in \mathbb{N}$ with $k\le d^2$. We define
    \begin{equation*}
        K(k,d):=\inf \biggl\{C(X)\colon X\subset M_d(\mathbb{R}), \mathrm{dim}(X)=k\biggr\}.
    \end{equation*}
    Then for any $\alpha>1$ if $k\gtrsim d^\alpha$ we have
    \begin{equation*}
        K(k,d)= \sqrt{\frac{d^2}{k}-1}+\mathcal O\left(\frac1{d^\beta}\right),
    \end{equation*}
    where $\beta\in(0,1)$ depends on $\alpha$.
\end{theorem}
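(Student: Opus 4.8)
The plan is to combine the algebraic description of the optimal constant coming from the $p=2$ case of Theorem \ref{thm : mainthm} with a concentration-of-measure argument for a random choice of $X$. Recall first that for a subspace $X\subset M_d(\mathbb R)$ one has
\[
C(X)^2=\sup_{\mathrm{rank}(R)=1,\ \|R\|=1}\frac{\|Q_X(R)\|^2}{\|P_X(R)\|^2}=\frac1{m(X)}-1,\qquad m(X):=\inf_{\mathrm{rank}(R)=1,\ \|R\|=1}\|P_X(R)\|^2 ,
\]
so that $K(k,d)^2=\bigl(\sup_{\dim X=k}m(X)\bigr)^{-1}-1$. The lower bound $K(k,d)\ge\sqrt{d^2/k-1}$ is exactly Proposition \ref{prop : trivupperbound} restated: the first-moment identity $\mathbb E\|P_X(R)\|^2=\dim(X)/d^2$ for a uniformly random unit rank-one $R$ gives $m(X)\le k/d^2$ for every $X$. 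So the whole content is the matching upper bound: it suffices to exhibit, for $k\gtrsim d^\alpha$, a $k$-dimensional $X$ with $m(X)\ge\frac{k}{d^2}(1-\eta)$ for a small $\eta=\eta(d)$; plugging this into the displayed formula gives $K(k,d)\le\sqrt{d^2/k-1}\,(1+O(\eta))$, and since the main term is at most $d^{(2-\alpha)/2}$, an $\eta$ of size $d^{-\gamma}$ (up to logarithms) produces the asserted error $\mathcal O(d^{-\beta})$.

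For the construction I would take $X$ to be a uniformly random $k$-dimensional subspace of $M_d(\mathbb R)\cong\mathbb R^{d^2}$, realised as the span of the first $k$ columns of a Haar orthogonal matrix $U$, so that $\|P_X(A)\|^2=\|U^t A\|^2$. For a fixed unit rank-one matrix, $\|P_X(R)\|^2$ is a Beta-distributed random variable with mean $k/d^2$ which concentrates sharply around that value; the real task is to make the lower-tail estimate uniform over the whole Segre cone of rank-one matrices. The structural point that makes this feasible — and where the hypothesis $\alpha>1$ enters — is that for a fixed direction $a\in\mathbb S^{d-1}$, writing $R=ab^t$,
\[
\inf_{\|b\|=1}\|P_X(ab^t)\|^2=\sigma_{\min}(H_a)^2 ,\qquad H_a:=U^t(a\otimes I_d)\in\mathbb R^{k\times d} ,
\]
and $H_a$ is a $k\times d$ block of the Haar matrix, so its squared singular values form a Jacobi (MANOVA) ensemble. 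Because $k\gtrsim d^\alpha\gg d$, this matrix is ``tall'', so by the known edge behaviour of the Jacobi ensemble $\sigma_{\min}(H_a)^2$ equals $k/d^2$ up to a relative correction of order $\sqrt{d/k}$; moreover $\sigma_{\min}(H_a)$ is a $1$-Lipschitz function on the orthogonal group $O(d^2)$, hence concentrated with subgaussian tails $e^{-cd^2t^2}$. One then passes from a single $a$ to all of $\mathbb S^{d-1}$ by a union bound over an $\varepsilon$-net of the sphere (of cardinality $(C/\varepsilon)^{d-1}$), controlling the increments through the Lipschitz dependence $a\mapsto H_a$. Since $\varepsilon$ need only be taken polynomially small, this union bound costs only $e^{O(d\log d)}$, and balancing it against the concentration rate $e^{-cd^2t^2}$ yields, on an event of positive probability, $\inf_a\sigma_{\min}(H_a)^2\ge\frac{k}{d^2}(1-\eta)$ with $\eta$ a (logarithmically corrected) power of $d$; feeding this back gives the expansion for $K(k,d)$, the exponent $\beta$ being read off from this balance together with $\alpha$.

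The main obstacle is precisely this uniformisation over the continuum of rank-one matrices: one needs $m(X)$ to be within relative error $o(d^{(\alpha-2)/2})$ of $k/d^2$, a demand that sharpens as $\alpha\downarrow 1$, which is why the argument must stay in the regime $k$ polynomially larger than $d$ and why it deteriorates in the critical window $\dim(X)\approx d$ (consistent with $N(d)$ itself being delicate there). Two features keep the argument affordable: the reduction to least singular values of ``wide'' random matrices, which lets one net only the $(d-1)$-dimensional sphere of directions $a$ rather than the full $(2d-2)$-dimensional Segre variety; and a careful tracking of the lower-order terms — the Jacobi edge displacement, the net radius, and the logarithmic factors — which is what pins down the admissible range of $\beta$ for each $\alpha$.
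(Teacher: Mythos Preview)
Your proposal is correct and follows the same overall strategy as the paper --- the lower bound from Proposition~\ref{prop : trivupperbound}, then the upper bound by the probabilistic method: choose $X$ uniformly on the Grassmannian, concentrate, and pass to a net --- but you over-engineer the concentration step.

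The paper's proof is considerably more elementary. It does \emph{not} reduce to least singular values or invoke the Jacobi/MANOVA edge. Instead, for each fixed unit rank-one $R$ it uses the scalar Johnson--Lindenstrauss concentration
\[
\mathbb P\!\left(\Bigl|\|P_X(R)\|^2-\tfrac{k}{d^2}\Bigr|>\varepsilon\,\tfrac{k}{d^2}\right)\lesssim e^{-k\varepsilon^2/4},
\]
then nets the \emph{full} $(2d-2)$-dimensional set $\mathbb S^{d-1}\times\mathbb S^{d-1}$ at scale $\approx \varepsilon k/d^2$ (using only that $(u,v)\mapsto\|P_X(u\otimes v)\|^2$ is $2$-Lipschitz) and applies a union bound. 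Since the net has cardinality $e^{O(d\log d)}$ and the concentration rate is $e^{-ck\varepsilon^2}$ with $k\gtrsim d^\alpha$, the union bound succeeds and the theorem follows.

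Your singular-value reduction halves the net dimension from $2d-2$ to $d-1$, but this buys nothing here: both nets cost $e^{O(d\log d)}$, which is already swamped by $e^{-ck\varepsilon^2}$ in the regime $k\gtrsim d^\alpha$. Meanwhile you pay for the extra machinery by having to track the Jacobi edge displacement $\sqrt{d/k}$ as a separate error term (the paper's pointwise mean is \emph{exactly} $k/d^2$, so no such term appears). So your route works, but the paper's is the more economical one. The paper does remark, after the proof, that the genuine improvement --- extending down to $k\gtrsim d/\varepsilon^2$ --- comes from replacing the naive net by a Gaussian-width argument (Klartag--Mendelson), computing $\gamma_2(\mathcal R_1\cap S)\approx\sqrt d$; that, rather than the singular-value decomposition, is the refinement that actually pushes the regime.
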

\begin{proof}
    Let $\mathrm{Gr_k}(M_d(\mathbb{R}))$ denote the Grassmannian of order $k$ on $M_d(\mathbb R)$, i.e. the set
    of $k$-dimensional subspaces of $M_d(\mathbb R)$. As is well known
    (see \cite[Proposition 7.2]{Eaton2023}) we can equip this space
    with a Haar measure. 
    Let $X$ be a random variable which is uniformly distributed on $\mathrm{Gr}_k(M_d(\mathbb{R}))$ with respect
    to the aforementioned Haar measure, and let $A$
  be a matrix of norm $1$ in $M_d(\mathbb{R}).$
  It is not difficult to calculate that
  \begin{equation*}
      \mathbb E(\|P_X(A)\|^2)=\frac{k}{d^2}.
  \end{equation*}
  One way to see this is to notice first that, since 
  $P_{X}=\sum P_{x_i}$ where $\{x_i\}$ is an orthonormal
  basis of $X$, we can without loss of generality assume
  $X=\langle x\rangle$, and then rotational invariance 
  proves the result.
  By an elementary concentration inequality (see \cite{Dasgupta}) we have that 
  \begin{equation*}
      \mathbb P\left(\left|\|P_X(A)\|^2-\frac k{d^2}\right|>\varepsilon \frac k{d^2}\right)\lesssim \exp\left(\frac k2(\ln(1+\varepsilon)-\varepsilon)\right),
  \end{equation*}
      Moreover, since the map $(u,v)\to \|P_X(u\otimes v)\|^2$
      is Lipschitz as a map $\mathbb{S}^{d-1}\times_{\ell_2}\mathbb{S}^{d-1}\to \mathbb{R}$ (and its Lipschitz constant is
      easily bound by $2$ again), it follows that once we find a suitable $\frac{k}{d^2}\frac{\varepsilon}{100}$ net of $\mathbb{S}^{d}\times \mathbb{S}^d$, we
      obtain that for all matrices $R$ of rank one and norm 
      $1$ we have by the union bound
       \begin{equation*}
      \mathbb P\left(\left|\|P_X(R)\|^2-\frac k{d^2}\right|\ge \varepsilon\frac{k}{d^2}\right)\lesssim \mathcal N\left(\mathbb{S}^{d-1}\times_{\ell_2} \mathbb{S}^{d-1},\frac \varepsilon{1000}\frac{k}{d^2}\right)\exp\left(-\frac{k\varepsilon^2}{4}\right).
      \end{equation*}
      Since $\mathcal N\left( \mathbb{S}^d\times \mathbb{S}^d,\varepsilon \right)\lesssim (1+\frac 2\varepsilon)^{2d}$,
       it follows that
       \begin{equation*}
            \mathbb P\left(\left|\|P_X(R)\|^2-\frac k{d^2}\right|\ge \varepsilon\frac{k}{d^2}\right)\lesssim \exp(C_\varepsilon d\ln d)
            \exp\left(-\frac{k\varepsilon^2}{4}\right).
       \end{equation*}
       Hence if $k$ grows at least as fast as $d^\alpha$, the probability goes to $0$ as $d\to \infty$, so
      there exist subspaces $X\subset M_d(\mathbb{R})$ of dimension
      $k$ such that 
      \begin{align*}
          \inf_{R}\|P_X(R)\|^2&\ge \frac k{d^2} (1-\varepsilon)\\
          \sup_{R}\|P_X(R)\|^2&\le (1+\varepsilon)\frac k{d^2}
      \end{align*}
      The claim follows.
\end{proof}
\begin{remark}
     Similarly, we can prove that
      \begin{equation*}
        \left\lceil \frac{d^2}2\right\rceil\le   h(d)\le \left\lceil \frac{d^2}2\right\rceil+1,
      \end{equation*}
      where $h(d):=\min\{\dim(X)\colon  C(X)=1\}.$ For even
      $d$, it is not hard to construct
      an explicit subspace that attains $C(X)=1$ of dimension $\frac{d^2}2$, thus providing us
      with an optimal version of Korn's inequality with constant one:
      \begin{equation*}
          X=\left\{\begin{pmatrix}A&B\\
          -B&A\end{pmatrix}\colon A,B\in M_{\frac d2}(\mathbb{R})\right\}.
      \end{equation*}
      This space is not a subspace of $\mathrm{Sym}(d)$,
      and hence it is not a direct generalisation
      of the standard form of Korn's inequality, unless
      $d=2$. This, however, should not come as a surprise: it is not hard to prove that the only                                               
      subspace of $\mathrm{Sym}(d)$ (other than itself)
      for which $C(X)=1$ is $\mathrm{Sym}_0$.
\end{remark}
        It is not hard to see that our argument actually applies to a slightly
        larger region, i.e. $k\gtrsim_\varepsilon d\log(d)$. On the other
        hand, as we will see later, there are clear topological obstructions
        in the "small $k$" region $k\lesssim d$ (namely, $X^\perp \cap \mathcal R_1\neq \emptyset$
        becomes inevitable for $k$ slightly below $2d$), so the best we
        can hope for is for our argument to be extended to $k\gtrsim d$.
        It turns out that, up to a dependence on the constant on $\varepsilon$, this can be attained.\par
        To see this, notice that our argument introduced the $d\log(d)$ factor when we applied our naive
        $\epsilon$-net argument. If, instead of considering each individual rank-one matrix and then 
        finding a net, one considers all of them at the same time, this can be avoided, but the argument
        becomes more technical. More precisely, let us define $\gamma_2(T)$ for a set $T\subset S^{d-1}$
        to be
        \begin{equation*}
            \gamma_2(T):=\mathbb E\left(\sup_{x\in T} \langle g, x\rangle\right),
        \end{equation*}
        where $g$ is a gaussian vector, i.e. $g\sim \mathcal{N}(0,\mathrm{I}_d)$. This quantity, known
        as Gaussian width (see \cite[Chapter 9]{Vershynin2018} and references therein), is the exact right measure of size that
        is needed for our probabilistic argument to go through. Indeed we have that, for any 
        $k\gtrsim \frac{\gamma_2^2(T)}{\varepsilon^2}$ and letting $P_X$ denote the projection onto 
        a random element of $\mathrm{Gr}_k(\mathbb{R}^d)$ we have (see \cite[Section 4]{Klartag2005})
        \begin{equation*}
            \mathbb{P}\left(\text{for all } x\in T,\ (1-\varepsilon)
            \le \|P_X(x)\|\frac{d}{\sqrt k}\le (1+\varepsilon)\right)>\frac 12.
        \end{equation*}
        To apply this to $T:=\mathcal R_1\cap S_{M_d(\mathbb{R})}$, we wish to calculate $\gamma_2(T)$.
        This is not hard to do:
        \begin{equation*}
            \gamma_2(T)=\mathbb E\left(\sup_{\|x\|=\|y\|=1}\langle g, x\otimes y\rangle\right)
            =\mathbb E(\|g\|_{\mathrm{Op}})\approx \sqrt d.
        \end{equation*}
        It follows that the argument in the proof of Theorem \ref{thm : optimalconcentration}
        can be extended to the region
        \begin{equation*}
            k\gtrsim \frac{\gamma_2^2(T)}{\varepsilon^2}=\frac{d}{\varepsilon^2}.
        \end{equation*}
        In other words, on the region $d=o(k)$ we have, as $d$ goes to infinity,
        \begin{equation*}
            K(k,d)=\sqrt{\frac{d^2}{k}-1}+o(1)
        \end{equation*}
\begin{remark}
    In case one is interested, more 
    broadly, in linear operators 
    $T(\nabla u)$
    such that $\|T(\nabla u)\|_{L^2(\mathbb{R}^d)}\gtrsim 
    \|\nabla u\|_{L^2(\mathbb{R}^d)},$ while
    maintaining\footnote{Such a normalisation is 
    necessary, for otherwise 
    it would suffice to consider $P_X/k$ for $k$ large enough.} $\|T\|_{F}\gtrsim 
    d$ then it is 
    possible to obtain
    such an operator with a much smaller
    range than in the projection case.
    In particular, a nonsingular bilinear
    map is said to be normed if $\|f(x,y)\|=\|x\|\|y\|$. In that case,
    (let $F$ denote the extension of $f$ to $M_d(\mathbb{R})$) the same rank-one convexity argument
    as in section \ref{sec : calcvar} proves that
    \begin{equation*}
        \|F(\nabla u)\|_{L^2(\mathbb{R}^d)}\ge \|\nabla u\|_{L^2(\mathbb{R}^d)},
    \end{equation*}
    and it is not hard to prove that
    $\|F\|=1$.
    Not much is known about the minimal dimension $\tilde g(d)$ such that a normed
    bilinear nonsingular map $f\colon\mathbb{R}^d\times \mathbb{R}^d\to \mathbb{R}^{\tilde g(d)}$ exists: to the author's knowledge only
    the following values are known: for $d\le 10,$
    $g(d)= \tilde g(d)$ and $\tilde g(16)\in [29,32]$. Asymptotically, one 
    can prove that $\tilde g(d)=\mathcal O\left(\frac{d^2}{\log(d)}\right)$, but
    it is not known if this is optimal.
    For other rings
    and fields, more is known: for $\mathbb K=\mathbb{C}$ (and for all fields $\mathbb K$ with positive characteristic) we
    have $\tilde g_{\mathbb{C}}(d)=\mathcal O(d^{1.62})$; lower-bound-wise
    we have $\tilde g_{\mathbb Z}(d)=\Omega(d^{\frac 65}).$
\end{remark}
It follows that the only way the constant can be uniform in dimension
is if $\mathrm{dim}(X)\approx d^2$, and that $C(\mathcal H(d))\gtrsim \sqrt d$. Somewhat surprisingly (compared to its parent
case $X=\mathrm{Sym}$, which is quite close to being optimal), $C(\mathcal H(d))$ grows much faster than polynomially. 
Indeed we can, with a more refined analysis,
pinpoint $C(\mathcal H(d))$ quite precisely:
\begin{proposition}
\begin{equation*}
    \log(C(\mathcal H(d)))=\frac{2G}\pi d+\mathcal O(\log(d)).
\end{equation*}
where $G$ is Catalan's constant.
\end{proposition}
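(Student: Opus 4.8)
The plan is to translate the optimal-constant problem into an extremal problem for polynomials, and then to solve that problem by a potential-theoretic asymptotic analysis.

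Since $f_{C,\mathcal H}$ is a quadratic form, the arguments of Part~1 of Theorem~\ref{thm : solchipot} and of Proposition~\ref{prop : trivupperbound} give
\[
	C(\mathcal H(d))^{2}
=
	\sup\Bigl\{\tfrac{\|Q_{\mathcal H}R\|^{2}}{\|P_{\mathcal H}R\|^{2}}\ :\ R\in\mathcal R_1\Bigr\}.
\]
Writing a rank-one matrix as $R=a\otimes b$ and identifying $a,b\in\mathbb R^{d}$ with the polynomials $a(x)=\sum_{i<d}a_ix^{i}$, $b(x)=\sum_{j<d}b_jx^{j}$, one has $\|R\|^{2}=\|a\|^{2}\|b\|^{2}$; the $k$-th skew-diagonal sum of $a\otimes b$ is the coefficient $c_k:=[x^{k}]\,a(x)b(x)$, and the $k$-th skew-diagonal of a $d\times d$ matrix has length $N(k)=\min(k+1,2d-1-k)$, so that $\|P_{\mathcal H}R\|^{2}=\sum_{k=0}^{2d-2}|c_k|^{2}/N(k)$. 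Therefore
\[
	\frac1{C(\mathcal H(d))^{2}+1}
=
	\inf_{a,b\neq 0}\ \frac{\sum_{k=0}^{2d-2}|c_k|^{2}/N(k)}{\|a\|^{2}\|b\|^{2}},
\]
and it suffices to prove that this infimum equals $e^{-\frac{4G}{\pi}d+\mathcal O(\log d)}$, since then $2\log C(\mathcal H(d))=\tfrac{4G}{\pi}d+\mathcal O(\log d)$.

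For the lower bound on the infimum (the upper bound on $C$) I would pass to the unit circle, writing $\|a\|^{2}=\frac1{2\pi}\int_0^{2\pi}|a(e^{i\theta})|^{2}d\theta$, likewise for $b$ and for $\sum_k|c_k|^{2}=\frac1{2\pi}\int_0^{2\pi}|a(e^{i\theta})b(e^{i\theta})|^{2}d\theta$, and use the description of $\sum_k|c_k|^{2}/N(k)$ as the squared distance of $a\otimes b$ to $\mathcal H^{\perp}$ (equivalently, the least squared $\ell^2$-norm of a matrix with prescribed skew-diagonal sums). The weight $1/N(k)$ is borderline — its total mass $\sum_k 1/N(k)$ is only of order $\log d$ — so the quantity is governed by how far the coefficients of $a(x)b(x)$ can be pushed towards the two ends $k\approx 0$, $k\approx 2d-2$, where $N(k)$ is smallest; making this quantitative by a Jensen / arithmetic–geometric-mean estimate applied skew-diagonal by skew-diagonal, together with the factorisation $c_k=[x^{k}]a(x)b(x)$, reduces matters to an equilibrium/capacity problem whose answer is an explicit Green's-function integral. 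Evaluating that integral is precisely where Catalan's constant enters, via $\int_0^{\pi/4}\log\cot\theta\,d\theta=G$, and produces the exponent $\tfrac{4G}{\pi}d$.

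For the matching upper bound on the infimum (the lower bound on $C$) I would exhibit an explicit near-optimal pair: take $a_d,b_d$ of degree $d-1$ with zeros distributed along complementary arcs according to the equilibrium measure found above (a Chebyshev/Jacobi-type choice), so that $|a_d(e^{i\theta})|^{2}$ and $|b_d(e^{i\theta})|^{2}$ are as strongly peaked at opposite ends of the spectrum as their fixed $L^{2}$-norms permit, forcing the coefficients of $a_db_d$ to concentrate where $N(k)$ is large; a Laplace-method/Stirling estimate of $\|a_d\|^{2}$, $\|b_d\|^{2}$ and $\sum_k|c_k|^{2}/N(k)$ then reproduces the rate up to a $\mathrm{poly}(d)$ factor, hence an $\mathcal O(\log d)$ error. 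I expect the genuine obstacle to be the upper bound on $C$: one needs a lower estimate for $\sum_k|c_k|^{2}/N(k)$ valid for \emph{every} factorisable polynomial $a(x)b(x)$ with $\deg a,\deg b<d$, and the logarithmic divergence of $\sum_k 1/N(k)$ makes a crude Jensen bound lossy, so pinning the constant down exactly — that is, recognising the equilibrium integral as Catalan's constant, not merely bounding it — is the delicate point; a secondary difficulty is keeping the prefactors on the two sides sharp enough to obtain an $\mathcal O(\log d)$ error term rather than a power of $d$.
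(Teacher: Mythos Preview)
Your initial reduction is correct and matches the paper exactly: via the quadratic-form/rank-one characterisation you get
\[
\frac1{C(\mathcal H(d))^{2}+1}
=\inf_{a,b\neq 0}\frac{\|P_{\mathcal H}(a\otimes b)\|^{2}}{\|a\|^{2}\|b\|^{2}}
=\inf_{a,b\neq 0}\frac{\sum_k |c_k|^{2}/N(k)}{\|a\|^{2}\|b\|^{2}},
\]
and both you and the paper then observe that $\sum_k|c_k|^{2}=\|ab\|_{L^2(\mathbb T)}^{2}$ and $\|a\|^2=\|a\|_{L^2(\mathbb T)}^2$ by Parseval. Since $1\le N(k)\le d$, the weight $1/N(k)$ costs only a factor between $1$ and $d$, i.e.\ $\mathcal O(\log d)$ additively.

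Where you diverge from the paper is in the next step. You propose a direct potential-theoretic analysis of $\inf\|ab\|_{L^2}/\|a\|_{L^2}\|b\|_{L^2}$: a Jensen/AM--GM argument that (you hope) collapses to an equilibrium problem on the circle, plus an explicit Chebyshev-type extremiser. The paper instead makes two quick moves. First, for polynomials of degree $<d$ one has $d^{-1/2}\|p\|_{L^\infty(\mathbb T)}\le\|p\|_{L^2(\mathbb T)}\le\|p\|_{L^\infty(\mathbb T)}$, so one may replace all $L^2$ norms by $L^\infty$ norms at the cost of $\mathcal O(\log d)$. Second --- and this is the point you are missing --- the resulting quantity
\[
\sup_{\deg p,\deg q<d}\ \log\frac{\|p\|_{L^\infty(\mathbb T)}\|q\|_{L^\infty(\mathbb T)}}{\|pq\|_{L^\infty(\mathbb T)}}
\]
is classical: it is the Kneser--Mahler problem, and its asymptotic value $\tfrac{2G}{\pi}d+\mathcal O(\log d)$ is known (Boyd; Rump--Sekigawa). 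The paper simply cites these results.

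Your proposed route is in principle what underlies those references, but as written it has a genuine gap. The sentence ``a Jensen / arithmetic--geometric-mean estimate \dots\ reduces matters to an equilibrium/capacity problem'' is not a mechanism: no inequality of Jensen/AM--GM type applied coefficientwise to $c_k=[x^k]ab$ gives a lower bound on $\sum_k|c_k|^2/N(k)$ sharp to the correct exponential rate --- you yourself flag this as ``the genuine obstacle'', and nothing in the proposal addresses it. What actually goes into Boyd's argument is Mahler's inequality relating $\|p\|_\infty$ to the Mahler measure $M(p)=\exp\int\log|p|$, together with a subtle analysis of how $M(p)M(q)/M(pq)=1$ interacts with the sup-norm bounds; the equilibrium measure and Catalan's constant emerge from optimising root placement on arcs, not from a coefficient-level AM--GM. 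So either carry out that analysis in full (essentially reproving Boyd's theorem), or --- far simpler --- switch to $L^\infty$ and cite it.
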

\begin{proof}
To see this, notice that, given two vectors $u$ and $v$ in $\mathbb{R}^d$, 
$ P_{\mathcal H(d)}(u\otimes v)=u\star v,$ where $u\star v$ is the polynomial coefficient convolution
operation induced by $\mathbb{R}^d\simeq \mathbb{R}_{\le d-1}[X]$ and $u*v$ denotes the standard convolution operation. Hence the problem of determining $C(\mathcal H(d))$
is equivalent to that of determining
\begin{equation*}
    C(\mathcal H(d))^{-1}\approx \inf_{u,v\in \mathbb{R}^d\setminus\{0\}} \frac{\|u\star v\|}{\|u\|\|v\|}.
\end{equation*}
Notice moreover that $1/\sqrt{d}\|u*v\|\le \|u\star v\|\le  
\|u*v\|$.
Using this and selecting the coefficients properly, it is not hard to obtain an exponential bound\footnote{One can just take $u=\sum_i \binom{d}{i}e_i$ and $v=\sum_i\binom{d}{i}(-1)^i e_i$; then $u*v=\sum_k (-1)^k\binom{d}{k}e_{2k}$
and it follows that $C(\mathcal H(d))^{-1}\le \frac{1}{\sqrt{\binom{2d}{d}}}$.};
this can be improved by noticing that, by Parseval, we can rephrase the problem as one about the $L^2(\mathbb{T})$
norm of polynomials with real coefficients:
\begin{equation*}
   \inf_{u,v\in \mathbb{R}^d\setminus\{0\}}\frac{\|u*v\|}{\|u\|\|v\|}=\underset{\substack{\deg(p),\deg(q)\le d-1}}{\inf}\frac{\|p(z)q(z)\|_{L^2(\mathbb{T})}}{\|p(z)\|_{L^2(\mathbb{T})}\|q(z)\|_{L^2(\mathbb{T})}}.
\end{equation*}
Since, for a polynomial in 
$\mathbb{R}_{d-1}[X]$, we have
that the $L^2(\mathbb T)$ and $L^\infty(\mathbb T)$ norms are equivalent
up to a polynomial factor, i.e.
\begin{equation*}
    \frac1{\sqrt d}\|p\|_{L^\infty(\mathbb T)}
    \le \|p\|_{L^2(\mathbb T)}\le
    \|p\|_{L^\infty(\mathbb{T})},
\end{equation*}
we have
\begin{equation*}
    \log\left(C(\mathcal H(d))\right)=\underset{\substack{\deg(p),\deg(q)\le d-1}}{\sup} \log\left(\frac{\|p\|_{L^\infty(\mathbb{T})}\|q\|_{L^\infty(\mathbb{T})}}{\|pq\|_{L^\infty(\mathbb{T})}}\right)+\mathcal O(\log(d)).
\end{equation*}
The value of the right hand side is already known, see \cite[Theorem 2, Remark 2]{Boyd}
and \cite[corollary 2.8, Theorem 2.9, Remark 2.10]{RumpSekigawa2010}:
\begin{equation*}
   \underset{\substack{\deg(p),\deg(q)\le d-1}}{\sup} \log\left(\frac{\|p\|_{L^\infty(\mathbb{T})}\|q\|_{L^\infty(\mathbb{T})}}{\|pq\|_{L^\infty(\mathbb{T})}}\right) =d\log(\delta)
    +\mathcal O(\log(d)),
\end{equation*}
where $\delta=e^{2G/\pi}$, proving the result.
\end{proof}

This leaves open the question whether, in the regime
$\dim(X)\approx 2 d,$ an optimality result such as Theorem
\ref{thm : optimalconcentration} can be attained. While the optimal regime
remains outside of our reach, if one is content
with only near-optimality, then the
result can be substantially improved:
\begin{proposition}
    For $k\ge (4+\delta)d,$ with $\delta>0$, there exists
    a subspace $X$ of $M_d(\mathbb R)$ of dimension $k$
    such that $C(X)\approx_\delta \sqrt d$.
\end{proposition}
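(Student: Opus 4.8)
The plan is to reduce the statement to a uniform lower bound on the size of projections of rank-one matrices, and then to produce the required subspace at random, analysing the construction through the Gaussian width of the rank-one variety. First I would recall, as established in Section \ref{sec : Chipot}, that for an admissible subspace $X\subset M_d(\mathbb R)$ the optimal constant obeys $C(X)^2=\big(\inf_{R}\|P_X(R)\|^2\big)^{-1}-1$, the infimum running over rank-one matrices $R$ of Frobenius norm $1$ (equivalently over $R=u\otimes v$, $\|u\|=\|v\|=1$). Hence $C(X)\approx_\delta\sqrt d$ is equivalent to $\inf_R\|P_X(R)\|^2\approx_\delta 1/d$. The lower bound $C(X)\gtrsim_\delta\sqrt d$ is free: by Proposition \ref{prop : trivupperbound} any subspace of dimension at most $k:=\lceil(4+\delta)d\rceil$ satisfies $C(X)^2\ge d^2/\dim X-1\gtrsim_\delta d$. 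So the whole content is to produce, for this $k$ and (as we may assume) $d$ large in terms of $\delta$, a $k$-dimensional subspace with $\inf_R\|P_X(R)\|^2\gtrsim_\delta 1/d$.

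To build it, I would set $T:=\mathcal R_1\cap S_{M_d(\mathbb R)}$ and apply the Gaussian-width random-projection estimate recalled above (cf. \cite[Section 4]{Klartag2005}, \cite[Chapter 9]{Vershynin2018}) in the ambient space $M_d(\mathbb R)\cong\mathbb R^{d^2}$. The decisive numerical input is $\gamma_2(T)=\mathbb E(\|g\|_{\mathrm{Op}})$ for $g$ a $d\times d$ matrix of i.i.d. standard Gaussians, which by standard random-matrix asymptotics equals $(2+o(1))\sqrt d$; hence $\gamma_2(T)^2=(4+o(1))d$, and this is precisely why the threshold sits at $4d$. Indeed, the hypothesis $\delta>0$ is exactly what permits a choice of $\varepsilon=\varepsilon(\delta)\in(0,1)$ with $(1-\varepsilon)^2 k>\gamma_2(T)^2(1+o(1))$, after which the estimate yields, with probability more than $1/2$, a $k$-dimensional subspace $X$ along which $\|P_X(R)\|\ge(1-\varepsilon)\sqrt k/d$ for every $R\in T$ at once. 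Fixing such an $X$ gives $\inf_R\|P_X(R)\|^2\ge(1-\varepsilon)^2 k/d^2\ge(1-\varepsilon)^2(4+\delta)/d\gtrsim_\delta 1/d$, hence $C(X)^2=\big(\inf_R\|P_X(R)\|^2\big)^{-1}-1\lesssim_\delta d$; combined with the lower bound, $C(X)\approx_\delta\sqrt d$.

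The main obstacle will be quantitative rather than conceptual: the random-projection input must be used in a form whose threshold is $\gamma_2(T)^2(1+o(1))$, not a fixed multiple $C_0\gamma_2(T)^2$ with $C_0>1$, since otherwise the arithmetic would fail to close at $(4+\delta)d$ for small $\delta$. This sharp threshold is what Gordon's comparison inequality supplies: for a standard Gaussian linear map $G\colon M_d(\mathbb R)\to\mathbb R^k$ one has $\mathbb E\inf_{R\in T}\|G(R)\|\ge a_k-\gamma_2(T)$ with $a_k=\sqrt k-O(1/\sqrt k)$, together with Gaussian concentration of the $1$-Lipschitz map $G\mapsto\inf_{R\in T}\|G(R)\|$. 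A routine further step is the passage from Gaussian maps to orthogonal projections onto uniformly random $k$-planes, using that a suitably normalised $G$ is within $O(\sqrt{k/d^2})=O(1/\sqrt d)$ of a coisometry, negligible at the scale $1/\sqrt d$ at which we work since $k\approx 4d\ll d^2$. Everything else --- the formula for $C(X)^2$ and the asymptotic $\mathbb E\|g\|_{\mathrm{Op}}\sim 2\sqrt d$ --- is routine.
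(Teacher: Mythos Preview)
Your proposal is correct and follows essentially the same route as the paper: the lower bound on $C(X)$ comes from Proposition \ref{prop : trivupperbound}, and the upper bound is obtained by a random Gaussian construction analysed via Gordon's min-max inequality together with the identification $\gamma_2(\mathcal R_1\cap S)=\mathbb E\|g\|_{\mathrm{Op}}\sim 2\sqrt d$, which is exactly what produces the threshold $4d$. The only cosmetic difference is that the paper passes from the Gaussian map $G$ to the projection $P_X$ by dividing by $\sigma_1(G)\approx d$, whereas you phrase this step as approximating a normalised Gaussian map by a coisometry; the content is the same.
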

\begin{proof}
    Let $G$ be a Gaussian matrix
    that maps onto $X$. Then 
    by Gordon's min-max inequality (see \cite[Theorem A, corollary 1.2]{Gordon})
    %one process is X_{u,v}=<u,Pv> and
    %Y_{u,v}=<u,g>+<h,v>. Where h and
    %v arre suitable Gaussian matrices.
    we have
    \begin{equation*}
        \mathbb E\left(\min_{A\in \mathcal R_1\cap S}\| GA\|\right)\gtrsim 
        \sqrt k-\gamma_2(\mathcal R_1\cap S)
        \approx \sqrt k-2\sqrt d.
    \end{equation*}
    It is well known that with high probability
    \begin{equation*}
        d-\sqrt k \lesssim \sigma_1(G)\lesssim \sqrt{k}+d,
    \end{equation*}
    so in our regime $\sigma_1(G)\approx d$.
    It follows that
    \begin{equation*}
        \mathbb E\left(\min_{A\in\mathcal R_1\cap S}\|P_X(A)\|\right)\gtrsim\frac{1}{d} \mathbb E\left(\min_{A\in\mathcal R_1\cap S}\|G(A)\|\right)
        \gtrsim\frac{\sqrt{k}-2\sqrt d}{d}.
    \end{equation*}
    This, together with proposition \ref{prop : trivupperbound} implies that there exists a subspace
    $X$ such that
    \begin{equation*}
        C(X)\approx \sqrt d.
    \end{equation*}
    Indeed, by applying standard concentration arguments
    it follows that this happens with high probability.
\end{proof}
\begin{remark}
    Gordon's theorem cannot be pushed any 
    further than $k>(4+\delta) d,$ as the estimate
    above becomes negative and hence useless. This
    is no coincidence: the statistical dimension of the convex hull
    generated by rank one matrices of norm $1$, i.e. the unit ball with respect to the
    nuclear norm, has statistical dimension $4d$, so from the viewpoint of
    these Gaussian arguments, it is not possible to guarantee that $X^\perp$
    does not intersect $\mathcal R_1$.
\end{remark}
\subsection{
Extending to \texorpdfstring{$p\neq 2$}{p!=2}:
\texorpdfstring{$N_p(d)$}{Np(d)}
}
We have so far been interested only in the dimensionality
problem in $W^{1,2}$, but the results above extend
to $W^{1,p}$ for $p\in(1,\infty).$ More precisely
we can define $N_p(d)$ in the same way as $N(d),$
just with $p$ instead of $2$ in \eqref{eq : Ndef}:
\begin{definition}\label{def : Np}
	Let $p\in (1,\infty)$. We define $N_p(d)$ to be
the smallest integer $k$ for which
there exist $\ell_1,\dots, \ell_k$ functionals on $M_d(\mathbb{R})$
such that
\begin{equation}
	\label{eq : Npdef}
	\|\nabla u\|_{L^p(\mathbb{R}^d)}
\lesssim
	\sum_{i=1}^k \|\ell_i(\nabla u)\|_{L^p(\mathbb{R}^d)}
\end{equation}
holds for all $u\in C^\infty_c(\mathbb R^d,\mathbb{R}^d)$.
\end{definition}
In the same way as for $N_2(d)$ we have:
\begin{lemma}
Let $M_p(d)$ be defined as
\begin{equation*}
	M_p(d)
:=
	\inf
	\left\{
		\mathrm{dim}(X)\colon  
		\forall u\in C^\infty_c(\mathbb{R}^d,\mathbb{R}^d)
		\  \|\nabla u\|_{L^p(\mathbb R^d)}
	\lesssim
		\| P_X(\nabla u)\|_{L^p(\mathbb R^d)}
	\right\}.
\end{equation*}
Then $M_p(d)=N_p(d)$.
\end{lemma}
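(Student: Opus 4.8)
The plan is to repeat, essentially verbatim, the argument that proved $M(d)=N(d)$ in the case $p=2$, with the single modification that the Hilbert-space identity $\|P_X(A)\|^2=\sum_i|\langle e_i,A\rangle|^2$ exploited there is no longer the convenient thing when $p\ne 2$; instead one invokes the equivalence of all norms on finite-dimensional spaces at two levels: on $\mathbb{R}^k$ (the $\ell^1$, $\ell^2$ and $\ell^p$ norms are comparable up to constants depending only on $k$) and on a $k$-dimensional subspace $X\subset M_d(\mathbb{R})$. Throughout one should keep in mind that $\|P_X(\nabla u)\|_{L^p(\mathbb{R}^d)}$ abbreviates $\big\|\,\|P_X(\nabla u(\cdot))\|\,\big\|_{L^p(\mathbb{R}^d)}$, with the inner norm Frobenius.

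For $N_p(d)\le M_p(d)$ I would take an admissible subspace $X$ of dimension $m$, fix an orthonormal basis $e_1,\dots,e_m$, and set $\ell_i:=\langle e_i,\cdot\rangle$. Pointwise $\|P_X(A)\|=\big(\sum_i|\ell_i(A)|^2\big)^{1/2}$, so by equivalence of the $\ell^2$ and $\ell^1$ norms on $\mathbb{R}^m$ together with the triangle inequality in $L^p$,
\[
\|P_X(\nabla u)\|_{L^p(\mathbb{R}^d)}\lesssim_m \left\| \sum_i |\ell_i(\nabla u)| \right\|_{L^p(\mathbb{R}^d)}\le \sum_i\|\ell_i(\nabla u)\|_{L^p(\mathbb{R}^d)}.
\]
Combined with $\|\nabla u\|_{L^p(\mathbb{R}^d)}\lesssim\|P_X(\nabla u)\|_{L^p(\mathbb{R}^d)}$, this makes $\ell_1,\dots,\ell_m$ a family witnessing $N_p(d)\le m$; an infimum over admissible $X$ finishes this direction.

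For the converse I would let $\ell_1,\dots,\ell_k$ be a minimal family realizing $N_p(d)=k$, write $\ell_i=\langle x_i,\cdot\rangle$, and put $X:=\text{span}(x_1,\dots,x_k)$; minimality forces the $x_i$ to be linearly independent, so $\dim X=k$. Both $A\mapsto\|P_X(A)\|$ and $A\mapsto\big(\sum_i|\ell_i(A)|^p\big)^{1/p}$ depend only on $P_X(A)$ and restrict to genuine norms on $X$ --- the second because the Gram matrix of $\{x_i\}$ is invertible, so $\langle x_i,v\rangle=0$ for every $i$ and $v\in X$ forces $v=0$ --- hence they are comparable on $M_d(\mathbb{R})$ with a constant depending only on $k$. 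Raising this pointwise comparison to the $p$-th power, integrating, and taking $p$-th roots gives $\|P_X(\nabla u)\|_{L^p(\mathbb{R}^d)}\approx_k\big(\sum_i\|\ell_i(\nabla u)\|_{L^p(\mathbb{R}^d)}^p\big)^{1/p}\approx_k\sum_i\|\ell_i(\nabla u)\|_{L^p(\mathbb{R}^d)}$, the last step again by equivalence of $\ell^p$ and $\ell^1$ on $\mathbb{R}^k$. Since $\|\nabla u\|_{L^p(\mathbb{R}^d)}\lesssim\sum_i\|\ell_i(\nabla u)\|_{L^p(\mathbb{R}^d)}$, this shows $X$ is admissible with $\dim X=k$, so $M_p(d)\le k=N_p(d)$. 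I do not anticipate a genuine obstacle here; the only point requiring a little care is that every implicit constant produced depends solely on $\dim X$ (equivalently $k$) and hence is harmless, since $X$ is a fixed finite-dimensional subspace once $d$ is fixed.
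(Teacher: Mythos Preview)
Your proposal is correct and follows exactly the route the paper intends: the paper does not spell out a proof of this lemma but simply says ``In the same way as for $N_2(d)$ we have,'' and your argument is precisely the $p=2$ proof with the one necessary tweak---replacing the Pythagorean identity $\|P_X(A)\|^2=\sum_i|\langle e_i,A\rangle|^2$ by equivalence of norms on the finite-dimensional space $X$ (and on $\mathbb{R}^k$). The care you take in noting that all implicit constants depend only on $\dim X$ is exactly what is needed, and there is no gap.
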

Mutatis mutandis, we can follow the proof of Theorem \ref{thm : solchipot} to obtain
$N_p(d)\ge g(d),$ but the opposite direction of the proof
does not go through any more (since being rank-one convex
is only necessary for quasiconvexity). 
However, since we do not care about the precise constant,
a different argument can be employed to prove
the upper bound as well.
\begin{theorem}
For $p\in (1,\infty)$ we have
   \begin{equation*}
        N_p(d)= N_2(d).
    \end{equation*}
\end{theorem}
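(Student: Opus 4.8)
The plan is to show $N_p(d)=N_2(d)=g(d)$ by establishing both inequalities. For the lower bound $N_p(d)\ge g(d)$, I would argue exactly as in the proof of Theorem \ref{thm : solchipot}: if $X$ witnesses $M_p(d)=\dim(X)$, then the coercivity inequality $\|Q_X(\nabla u)\|_{L^p}\lesssim\|P_X(\nabla u)\|_{L^p}$ holds for all $u\in C^\infty_c(\mathbb{R}^d,\mathbb{R}^d)$, which forces $f_{C,X}^{qc}(0)=0$, hence $f_{C,X}^{rc}(0)=0$; by Theorem \ref{thm : mainthm} this is impossible when $X^\perp$ contains a rank-one matrix. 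Thus $X^\perp$ avoids $\mathcal R_1$, so $\dim(X)\ge d^2-n_{\mathbb R}(d,1)=g(d)$, giving $N_p(d)\ge g(d)=N_2(d)$.

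For the upper bound $N_p(d)\le N_2(d)$, I would produce, for the specific admissible subspace $X=\ker(\tilde f)^\perp$ coming from a nonsingular bilinear map $f$ realising $g(d)$ (equivalently, one may simply take $X=\mathcal H(d)$, the Hankel matrices, as in Theorem \ref{thm : KornHankel}), the $L^p$ inequality $\|\nabla u\|_{L^p(\mathbb{R}^d)}\lesssim_{p,d}\|P_X(\nabla u)\|_{L^p(\mathbb{R}^d)}$. Here the quasiconvexity/rank-one convexity machinery no longer suffices, but it is not needed: the operator $T_X(u):=P_X(\nabla u)$ is a homogeneous first-order constant-coefficient operator, and the condition that $X^\perp$ avoids rank-one matrices is precisely the statement that $T_X$ is elliptic (the symbol $\xi\mapsto P_X(\cdot\otimes\xi)$ is injective on each $\mathbb{R}^d$-factor, cf.\ the remark following Theorem \ref{thm : solchipot} and \cite[Theorem 3.2]{Chipot2021OnType}). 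For a homogeneous elliptic operator the associated $L^p$ estimate $\|\nabla u\|_{L^p}\lesssim\|T_X u\|_{L^p}$ follows from Calder\'on--Zygmund theory: one writes $\widehat{\nabla u}(\xi)=m(\xi)\widehat{T_X u}(\xi)$ with $m$ a matrix-valued $0$-homogeneous symbol that is smooth away from the origin (because ellipticity makes the inversion well-defined on the sphere), so $m$ is an $L^p$-Mikhlin--H\"ormander multiplier for every $p\in(1,\infty)$. This yields $N_p(d)\le\dim(X)=g(d)=N_2(d)$.

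Combining the two directions gives $N_p(d)=N_2(d)$ for all $p\in(1,\infty)$.

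The main obstacle, and the only genuinely new point relative to the $p=2$ case, is the upper bound: one must abandon the envelope/laminate characterisation (which gives only a necessary condition for $p\neq2$) and instead invoke a Fourier-analytic argument. The delicate part is verifying that the pseudo-inverse symbol $m(\xi)$ is $C^\infty$ on $\mathbb{S}^{d-1}$ with all derivatives bounded, so that the Mikhlin condition $|\partial^\alpha m(\xi)|\lesssim_\alpha|\xi|^{-|\alpha|}$ holds; this is immediate from ellipticity plus compactness of the sphere, but it is the step where the structural hypothesis on $X$ is actually used. Everything else is a routine transcription of the $p=2$ proof with the observation that the implicit constants are allowed to depend on $p$ and $d$.
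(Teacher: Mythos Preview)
Your proposal is correct and follows essentially the same route as the paper: both directions are argued identically, with the lower bound via rank-one convexity (Theorem \ref{thm : mainthm}) and the upper bound by observing that $T_X(u)=P_X(\nabla u)$ is elliptic whenever $X^\perp$ avoids $\mathcal R_1$, whence the $L^p$ estimate follows from standard singular-integral theory. The only cosmetic difference is that the paper cites Van Schaftingen's characterisation of injective ellipticity for the $L^p$ bound, whereas you spell out the Mikhlin--H\"ormander multiplier argument directly; these are the same mechanism.
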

\begin{proof}
We already have $N_p(d)\ge g(d),$ so it suffices to prove
$N_2(d)\ge N_p(d)$. To prove it,
let $X$ be an admissible subspace for $N_2(d)$.
Then the operator $T(u):= P_X(\nabla u)$
is an elliptic operator for $p=2$, so it is
 elliptic in the injective sense
(see \cite[Theorems 5.1, 5.3]{Schaftingen}),
hence we have the bound
\begin{equation*}
	\|\nabla u\|_{L^p(\mathbb R^d)}
\lesssim_{T,d,p}
	\|T u\|_{L^p(\mathbb R^d)},
\end{equation*}
which proves the claim.
\end{proof}
\begin{remark}
	The same questions that we posed for $p=2$
	(the best decay that can be obtained with $\text{dim(X)}
	=N_p(d)$ and the best $\mathrm{dim}(X)$ that can be attained
	while maintaining a constant $\ge 1$) can be asked
	for $p\neq 2$. In this case, we remark that the author
	\cite{Cassese} has obtained dimension-free bounds
	for various standard forms of Korn's inequality.
\end{remark}
While for $p=2$, the issue of determining the precise value of
the constant $C(p,X)$ such that
\begin{equation*}
	\|Q_X(\nabla u)\|_{L^p(\mathbb R^d)}
\le
	C(p,X) \|P_X(\nabla u)\|_{L^p(\mathbb R^d)}
\end{equation*}
holds can be explicitly answered (at least a priori)
 using
\begin{equation*}
     C(X,2)^{-1}=\inf_{A\in\mathcal R_1} \frac{\|P_X(A)\|}{
\|Q_X(A)\|},
\end{equation*}
the situation is much more complicated for
$p\neq 2$. Indeed, the question of determining the $L^p$
constant is still open even for the standard version of Korn's
inequality (see \cite{Cassese} for more on this). However,
we can employ our method to obtain bounds on a similar constant,
namely $C^{\mathrm{rc}}$, which is defined as
\begin{equation*}
	C^{\mathrm{rc}}(X,p)
:=
	\inf
	\left\{
		c\colon 
		 f_{p,c,X}^{rc}(0)
	=
		0
	\right\}.
\end{equation*}
Indeed, Theorem \ref{thm : mainthm} allows us to prove
\begin{proposition}
\begin{equation*}
	C^{\mathrm{rc}}(X,p)
\le
	C(X,2)(p^*-1).
\end{equation*}
\end{proposition}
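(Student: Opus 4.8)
The plan is to read the proposition off directly from Theorem~\ref{thm : mainthm}, the only work being to reconcile the notation. Recall that $C^{\mathrm{rc}}(X,p)$ is by definition the infimum of those $c$ for which $f_{p,c,X}^{rc}(0)=0$. Two elementary observations about this infimum will be used. First, since $\delta_0$ is a (trivial) laminate with barycentre $0$, one always has $f_{p,c,X}^{rc}(0)\le f_{p,c,X}(0)=0$, so the condition $f_{p,c,X}^{rc}(0)=0$ is the same as $f_{p,c,X}^{rc}(0)>-\infty$; and because $f_{p,c,X}$ is positively $p$-homogeneous, the dichotomy recalled in Section~\ref{sec : calcvar} (either $f^{rc}\equiv-\infty$, or $f^{rc}>-\infty$ and $f^{rc}$ is rank-one convex) shows this is in turn equivalent to $f_{p,c,X}^{rc}$ being real-valued. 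Second, $c\mapsto f_{p,c,X}$ is pointwise non-decreasing, hence so is $c\mapsto f_{p,c,X}^{rc}$, so the set of admissible $c$ is an interval of the form $[C^{\mathrm{rc}}(X,p),+\infty)$ (possibly empty).

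With this in hand, I would first dispose of the degenerate case: if $X^\perp$ contains a nonzero rank-one matrix, then by Theorem~\ref{thm : mainthm} there is no $c$ making $f_{p,c,X}^{rc}$ real-valued, so $C^{\mathrm{rc}}(X,p)=+\infty$; but in that case the defining infimum of $C(X,2)$ (written $C(2,d)$ in the statement) is $0$, so $C(X,2)=+\infty$ and the inequality holds trivially. So assume $X^\perp$ avoids rank-one matrices. Then Theorem~\ref{thm : mainthm} provides the explicit admissible value $c=(p^*-1)B$ with $B^{-1}=\inf_{\mathrm{rank}(A)=1}\|P_X(A)\|/\|Q_X(A)\|$; by the homogeneity observation this means $f_{p,(p^*-1)B,X}^{rc}(0)=0$, whence $C^{\mathrm{rc}}(X,p)\le(p^*-1)B$. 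Finally, comparing $B^{-1}=\inf_{A\in\mathcal R_1}\|P_X(A)\|/\|Q_X(A)\|$ with the formula $C(X,2)^{-1}=\inf_{A\in\mathcal R_1}\|P_X(A)\|/\|Q_X(A)\|$ recalled just above the proposition identifies $B=C(X,2)$, and the claimed bound $C^{\mathrm{rc}}(X,p)\le(p^*-1)C(X,2)$ follows.

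In short, the proposition is just Theorem~\ref{thm : mainthm} transcribed into the language of the constant $C^{\mathrm{rc}}$, so there is no genuine obstacle; the only points requiring a line of justification are the equivalence between real-valuedness of $f_{p,c,X}^{rc}$ and the identity $f_{p,c,X}^{rc}(0)=0$ for these homogeneous functions, and the trivial monotonicity in $c$, both of which are already implicit in Section~\ref{sec : calcvar}.
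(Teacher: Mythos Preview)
Your proposal is correct and takes essentially the same approach as the paper: the paper simply states that Theorem~\ref{thm : mainthm} allows one to prove the proposition, and your argument is precisely the spelling-out of that invocation, including the identification $B=C(X,2)$ and the observation that real-valuedness of $f_{p,c,X}^{rc}$ is equivalent to $f_{p,c,X}^{rc}(0)=0$ by homogeneity. Your treatment of the degenerate case and the monotonicity in $c$ are useful clarifications that the paper leaves implicit.
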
 
Focusing in particular on the Hankel-Korn inequality,
we can take advantage
of the  sharp part of Theorem \ref{thm : sharp}.
Indeed, we can embed the scalar martingales by defining 
$M_n=\frac{f_n}2 (e_1\otimes e_2+e_2\otimes e_1)+\frac{g_n}
2(e_1\otimes e_2-e_2\otimes e_1)$
and the theorem then allows us to show 
\begin{equation*}
	C^{\mathrm{rc}}(\mathcal H(d), p)
\ge
	(p^*-1).
\end{equation*}
It is then natural to conjecture
\begin{conjecture}
\begin{equation}
	C(\mathcal H(d),p)
=
	C^{\mathrm{rc}}(\mathcal H(d),p)
=
	C(\mathcal H(d),2)(p^*-1).
\end{equation}
\end{conjecture}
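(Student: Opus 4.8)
\noindent We indicate how one might try to attack this conjecture, and where the difficulty lies. Since quasiconvexity implies rank-one convexity, $f^{\mathrm{qc}}(0)=0$ forces $f^{\mathrm{rc}}(0)=0$, so that $C(\mathcal H(d),p)\ge C^{\mathrm{rc}}(\mathcal H(d),p)$; combined with the bound $C^{\mathrm{rc}}(\mathcal H(d),p)\le C(\mathcal H(d),2)(p^{*}-1)$ coming from Theorem \ref{thm : mainthm}, the conjecture is equivalent to the two estimates
\begin{equation*}
	C^{\mathrm{rc}}(\mathcal H(d),p)\ \ge\ C(\mathcal H(d),2)(p^{*}-1)
	\qquad\text{and}\qquad
	C(\mathcal H(d),p)\ \le\ C(\mathcal H(d),2)(p^{*}-1).
\end{equation*}
The first is a sharpness statement for the laminate/martingale method of Section \ref{sec : calcvar}; the second is a genuine sharp $L^{p}$ inequality, and is where I expect the main obstruction to be.

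\noindent For the lower bound one would like to apply Theorem \ref{thm : sharp}, i.e.\ to produce $A\in\mathcal H(d)$, $B\in\mathcal H(d)^{\perp}$ with $\|A\|=\|B\|=1$, the extremal ratio $\|B\|=C(\mathcal H(d),2)\|A\|$, and $\mathrm{rank}(A\pm B)\le1$. For $d=2$ this succeeds verbatim: $C(\mathcal H(2),2)=1$, the infimum is attained at $R=e_{1}\otimes e_{2}$, and $P_{\mathcal H(2)}(R)\pm Q_{\mathcal H(2)}(R)\in\{\sqrt2\,e_{1}\otimes e_{2},\ \sqrt2\,e_{2}\otimes e_{1}\}$ are both rank one, so Theorem \ref{thm : sharp} already gives $C^{\mathrm{rc}}(\mathcal H(2),p)=(p^{*}-1)$, matching the conjecture. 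For $d\ge3$ the same route is blocked: if $R^{*}=u^{*}\otimes v^{*}$ is an extremiser, then $2P_{\mathcal H(d)}(R^{*})=R^{*}+\big(P_{\mathcal H(d)}(R^{*})-Q_{\mathcal H(d)}(R^{*})\big)$, so requiring the second term to also have rank $\le1$ would force $\mathrm{rank}(P_{\mathcal H(d)}(R^{*}))\le2$, i.e.\ a vanishing Hankel minor, which one does not expect of a generic extremiser. The plan is instead to replace the single rank-one segment by a \emph{staircase-type laminate} in the spirit of Faraco: a long chain of small rank-one splits whose increments remain in the cone $\{(P_{\mathcal H(d)}R,\,Q_{\mathcal H(d)}R):R\in\mathcal R_{1}\}$ and which, after renormalisation, reproduce the Bellman dynamics underlying the sharpness half of Burkholder's Theorem \ref{thm : burk}, so that $(P_{\mathcal H(d)}M_{n},\,Q_{\mathcal H(d)}M_{n})$ shadows the extremal Burkholder pair scaled by $C(\mathcal H(d),2)$. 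The obstacle here is geometric: one must verify that the Hankel cone is rich enough to support this at the \emph{extremal} ratio, which in the polynomial model $P_{\mathcal H(d)}(u\otimes v)=u\star v$ is a statement about the near-minimisers of $\|pq\|_{L^{2}(\mathbb T)}/(\|p\|_{L^{2}(\mathbb T)}\|q\|_{L^{2}(\mathbb T)})$ and the perturbation freedom they admit.

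\noindent For the upper bound one must establish
\begin{equation*}
	\|Q_{\mathcal H(d)}(\nabla u)\|_{L^{p}(\mathbb R^{d})}
\le
	C(\mathcal H(d),2)(p^{*}-1)\,\|P_{\mathcal H(d)}(\nabla u)\|_{L^{p}(\mathbb R^{d})}
\end{equation*}
for all $u\in C^{\infty}_{c}(\mathbb R^{d},\mathbb R^{d})$. The operator recovering $Q_{\mathcal H(d)}(\nabla u)$ from $P_{\mathcal H(d)}(\nabla u)$ is a matrix-valued Fourier multiplier homogeneous of degree zero, i.e.\ a Calderón--Zygmund operator built from the Riesz symbols (its invertibility on the relevant range being exactly ellipticity, which holds because $\mathcal H(d)^{\perp}$ avoids rank-one matrices), and one is after its \emph{best} $L^{p}$ constant. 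The natural attack is the Burkholder/Bañuelos--Wang route: represent (a conditional expectation of) this singular integral as a transform of orthogonal space--time martingales in which the $L^{2}$ rank-one computation furnishes precisely the $C(\mathcal H(d),2)$-differential-subordination of the transformed martingale, then apply \eqref{eq : burk} to gain the factor $(p^{*}-1)$, the final conditional expectation being an $L^{p}$-contraction and hence harmless. The difficulty — and the reason this is only conjectured — is that extracting the \emph{sharp} constant from such a representation is exactly the type of problem that remains open even for the Beurling--Ahlfors operator (the Iwaniec--Martin conjecture); without a Bellman function adapted to the pair $(P_{\mathcal H(d)},Q_{\mathcal H(d)})$ the method yields only $C(\mathcal H(d),p)\lesssim_{p,d}C(\mathcal H(d),2)$ with a non-explicit constant, as is already supplied by ellipticity. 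I expect this step to be the real bottleneck. Two sensible intermediate targets are $d=2$, where $\mathcal H(2)=\mathrm{Sym}(2)$ and the inequality is a sharp planar Korn inequality in $L^{p}$ for which the Burkholder-function technology is best developed, and $p$ an even integer, where $\|\cdot\|_{L^{p}}^{p}$ expands into finitely many monomials in the entries of $\nabla u$ and the claim reduces to an algebraic positivity statement among products of Riesz multipliers.
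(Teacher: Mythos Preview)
The statement is a \emph{conjecture}; the paper does not prove it and offers no proof to compare against. Your write-up is, appropriately, not a proof either but an outline of partial progress and obstructions, and it is consistent with the paper's own remarks following the conjecture.

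Specifically: your reduction to the two inequalities $C^{\mathrm{rc}}\ge C(\mathcal H(d),2)(p^{*}-1)$ and $C\le C(\mathcal H(d),2)(p^{*}-1)$ is correct. Your $d=2$ analysis (via $e_{1}\otimes e_{2}$ and $e_{2}\otimes e_{1}$, using $C(\mathcal H(2),2)=1$) exactly reproduces the embedding the paper uses just before the conjecture to obtain $C^{\mathrm{rc}}(\mathcal H(2),p)=p^{*}-1$. Your identification of the upper bound as the genuine bottleneck, of Iwaniec--Martin type, also matches the paper's commentary.

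You go somewhat further than the paper on two points. First, you spell out why Theorem~\ref{thm : sharp} does not directly yield the sharp lower bound for $d\ge 3$ (the rank-$2$ obstruction on $P_{\mathcal H(d)}(R^{*})$) and suggest a staircase-laminate workaround; the paper makes no such attempt and for general $d$ only records the weaker bound $C^{\mathrm{rc}}(\mathcal H(d),p)\ge p^{*}-1$, missing the factor $C(\mathcal H(d),2)$. Second, your sketch of a Ba\~nuelos--Wang martingale representation for the upper bound is more concrete than anything in the paper, which simply flags the connection to the Iwaniec--Martin circle of problems and leaves it at that. Neither you nor the paper closes the gap; your discussion is a reasonable and slightly more detailed account of why the conjecture remains open.
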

Let us point out that, for $d=2$, we have 
$C^{\mathrm{rc}}(\mathcal H(2),p)=
C(\mathcal H(2), 2)(p^*-1)$, so the conjecture becomes 
one of quasiconvexity at $0$ of
$f_{\mathcal H(2), C^{\mathrm{rc}},p}^{rc}$
and that in this case the conjecture is 
similar to (and indeed, implied by)
the Iwaniec-Martin conjecture
(see \cite{Iwaniec1982OnExponents}).
Indeed, Theorem \ref{thm : sharp} can be used to obtain
a new proof of the lower bound in the 
Iwaniec-Martin conjecture.
This will be further
explored in future work.
\begin{remark}
The study of the quasiconvexity of these rank-one convex envelopes
is an active area of research, particularly for $X=\mathcal C(2)$ (i.e. the set of 
conformal $2\times 2$ matrices)
thanks to its relation with the Iwaniec-Martin conjecture (see \cite{BaernsteinII1997}, \cite{Iwaniec2002Nonlinear}, \cite{Astala2012BurkholderMappings}
for more on this connection). 
In this case, it turns out that the envelope can be explicitly 
described in terms of the Burkholder function, whose conjectured quasiconvexity
has been studied intensively, see \cite{Guerra2022}, \cite{Astala2023TheTheory}, \cite{Guerra2019}, \cite{AstalaIwaniecPrauseSaksman2015} and \cite{KariAstala2024}.    
\end{remark} 
\section{Korn's second inequality:
\texorpdfstring{$N'(d)$}{N'(d)}}\label{sec : second}
We now deal with $N'(d)$, i.e. the minimal dimension of
$X\subset M_d(\mathbb{R})$ such that for all
$u\in H^1(\Omega)$ we have
\begin{equation*}
	\|u\|_{H^1(\Omega)}
\lesssim
	\|u\|_{L^2(\Omega)}
+
	\|P_X(\nabla u)\|_{L^2(\Omega)}.
\end{equation*}
First, let us note that the correct definition should read 
$N'(d,\Omega)$, as it can a priori depend on the domain. 
We will use the term $N'(d)$ to refer to $N'(d,\Omega)$
for any open bounded set $\Omega$ with Lipschitz boundary.
\footnote{
	One can extend this to, say, John domains, 
	but for simplicity we stick with this simpler case.
	Let us note that the geometry of $\Omega$ does play a role:
	namely, if $\partial \Omega$ is too irregular, 
	Korn's second inequality fails.
}
In \cite{Chipot2021OnType} the following
was proved:
\begin{theorem}
    \begin{enumerate}
        \item $N'(2)=3$
        \item $N'(d)\ge N(d)$
        \item $N'(d)\ge d$ and the inequality is strict if $d$ is odd and $d>1$.
    \end{enumerate}
\end{theorem}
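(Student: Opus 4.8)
The plan is to derive all three statements from the single sharp identity $N'(d)=2d-1$, which I would obtain from an algebraic reformulation exactly parallel to Part~1 of Theorem~\ref{thm : solchipot}. First, $X$ is admissible for $N'$ precisely when $T_X(u):=P_X(\nabla u)$ satisfies Korn's second inequality on $\Omega$; the point I would exploit is that, on a bounded Lipschitz domain, the correct necessary-and-sufficient condition is not real ellipticity of $T_X$ but $\mathbb C$-\emph{ellipticity}, i.e.\ nonvanishing of the complexified symbol $a\otimes\xi\mapsto P_X(a\otimes\xi)$ for all $a,\xi\in\mathbb C^d\setminus\{0\}$. Writing $X_{\mathbb C}$ for the complexification and noting that with the Frobenius inner product $(X_{\mathbb C})^{\perp}=(X^{\perp})_{\mathbb C}\subset M_d(\mathbb C)$, this says $X$ is admissible iff $(X^{\perp})_{\mathbb C}$ contains no nonzero matrix of rank $\le 1$; since $\dim_{\mathbb R}X=\dim_{\mathbb C}X_{\mathbb C}=d^2-\dim_{\mathbb C}(X^{\perp})_{\mathbb C}$, minimising $\dim X$ is the same as maximising the complex dimension of a \emph{complexified} real subspace of $M_d(\mathbb C)$ that avoids rank-one matrices.

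Next I would evaluate this quantity. For the upper bound $N'(d)\le 2d-1$: the polynomial-multiplication map $\mathbb K_{d-1}[X]\times\mathbb K_{d-1}[X]\to\mathbb K_{2d-2}[X]$ is nonsingular and surjective over \emph{any} field (a product of nonzero polynomials is nonzero), in particular over $\mathbb C$, and the kernel of the induced linear map on $M_d(\mathbb C)$ is the complexification of the corresponding real kernel; it has complex dimension $d^2-(2d-1)$, so taking its real part as $X^{\perp}$ yields an admissible $X$ with $\dim X=2d-1$. For the lower bound $N'(d)\ge 2d-1$: Proposition~\ref{prop : gclosed} gives $n_{\mathbb C}(d,1)=d^2-(2d-1)$ via the projective dimension count ($\mathbb P(\mathcal R_1)$ has dimension $2d-2$ inside $\mathbb P^{d^2-1}$), so \emph{a fortiori} any complexified real subspace avoiding rank-one matrices has complex dimension $\le d^2-(2d-1)$, forcing $\dim X\ge 2d-1$. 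Hence $N'(d)=2d-1$, and the three claims follow at once: $N'(2)=3$; $N'(d)=2d-1\ge N(d)$ because $N(d)=g_{\mathbb R}(d)\le 2d-1$ by the same polynomial trick over $\mathbb R$; and $N'(d)=2d-1>d$ for every $d>1$, which in particular gives strictness for odd $d>1$ (and, as a bonus, for all $d>1$).

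I expect the main obstacle to be the equivalence ``$\mathbb C$-ellipticity of $T_X$ $\iff$ Korn's second inequality on a Lipschitz domain'' at the generality required (the precise exceptional geometries, the role of $\partial\Omega$): this is exactly Part~2 of Theorem~\ref{thm : solchipot}, where one must lean on Agmon--Douglis--Nirenberg type a priori estimates together with the structure theory of the finite-dimensional (polynomial) kernels of $\mathbb C$-elliptic first-order operators (Kalamajska, Smith, and the more recent Lipschitz-domain refinements). The same theory would also give the domain-independence of $N'(d,\Omega)$: the positive direction is uniform in $\Omega$ once the $\mathbb C$-elliptic estimate is in hand, while for the negative direction a rank-one element $a\otimes\xi$ of $(X^{\perp})_{\mathbb C}$ produces, through the associated affine (or, when $\xi$ is genuinely complex, holomorphic) vector fields, a family of counterexamples that can be localised into any open subset. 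By contrast, the algebraic evaluation of $n_{\mathbb C}(d,1)$ is comparatively routine, which is precisely why the reformulation is so effective here.
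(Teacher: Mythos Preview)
Your proposal is correct and follows essentially the same route as the paper: the theorem is not proved directly but is superseded by the sharp identity $N'(d)=g_{\mathbb C}(d)=2d-1$, established via the $\mathbb C$-ellipticity characterisation of admissibility (Kalamajska's representation formula for the positive direction, holomorphic vector fields $u_g(x)=\Re(g(bx)a)$ together with Rellich compactness for the negative direction) and the projective-dimension computation of Proposition~\ref{prop : gclosed}. The three items then follow exactly as you indicate, and the paper explicitly notes the same bonus that the inequality $N'(d)>N(d)$ is in fact strict for all $d>1$.
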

We prove a closed form for $N'(d).$ Namely
\begin{equation*}
    N'(d)
=
	g_{\mathbb C}(d)
=
	2d-1.
\end{equation*}
This implies the previous theorem in a strengthened form,
since both inequalities stated above
have to be strict (provided $d>1$).
\begin{proof}[Proof of $N'(d)\ge g_{\mathbb C}(d)$]
Let $X$ be admissible for $N'(d)$, i.e. let us assume that
\begin{equation*}
	\|u\|_{H^1(\Omega)}
\lesssim
	\|u\|_{L^2(\Omega)}
+
	\|P_X(\nabla u)\|_{L^2(\Omega)}
\end{equation*}
holds. Assume, for the sake of contradiction,
that $\mathrm{dim}(X)<g_\mathbb{C}(d)$.
It follows that ${X^{\mathbb{C}}}^\perp$
(i.e. the orthogonal complement of the complexification of $X$)
 contains matrices of rank $1$,
hence there exist $a,b\in \mathbb{C}^d$ such that
\begin{equation*}
	P_{X^\mathbb{C}}(a\otimes b)
=
	0.
\end{equation*}
Let us define the set $S:=\{u\in W^{1,2}(\Omega)\colon P_X(\nabla u)=0\}$.
We claim $S$ is infinite-dimensional. 
To see this, take $g\colon \mathbb{C}\to \mathbb{C}$ entire, and define
\begin{equation*}
    u_g(x)=\Re(g(b\cdot x)a).
\end{equation*}
It follows that
\begin{equation*}
    \nabla u_g=\Re(g'(b\cdot x)a\otimes b),
\end{equation*}
hence $u_g\in S$, so $S$ is infinite-dimensional. 
Since $X$ is admissible, it follows that for all $u\in S$ we have
\begin{equation*}
    \|u\|_{W^{1,2}(\Omega)}\lesssim \|u\|_{L^2(\Omega)},
\end{equation*}
so the two norms $\|\cdot\|_{H^1}$ and $\|\cdot \|_{L^2}$
are equivalent on $S$.
Since the embedding $H^1(\Omega)\to L^2(\Omega)$ is compact,
 it follows that the unit ball of $S$ is totally bounded,
which implies $S$ is finite-dimensional,
a contradiction.
\end{proof}
\begin{proof}[Proof of $N'(d)\le g_{\mathbb{C}}(d)$]
We prove the result for domains
that are star-shaped with respect to a ball.
Standard partition of unity and 
reflection arguments extend the result to Lipschitz domains.
Let $X$ be a subspace of $M_d(\mathbb{R})$ such that
$(X^{\perp})^{\mathbb C}$ avoids all rank one matrices. Let $T_X$ denote the 
differential operator $T_X(u)=P_X(\nabla u)$. 
Thanks to the rank-one condition, 
we have that the family $T_X(\cdot)_{i,j}$ of scalar differential operators satisfies condition $(C)$
of \cite{Kalamajska}, hence thanks to \cite[Theorem 4]{Kalamajska} we obtain that, for each $u\in C^\infty(\Omega)$ we have
\begin{equation*}
    u(x)=\mathcal P(u)(x)+\int_{\Omega}K(x,y)T_X(u)(y)
    \mathrm{d} y,
\end{equation*}
where $K$ is a vector-valued Calderón-Zygmund kernel and 
$\mathcal P(u)(x)$ is defined as
\begin{equation*}
    \mathcal P(u)(x)=
    \int_{\Omega}\sum_{|\alpha|\le l}
    \partial^\alpha_y\left(\frac{(x-y)^\alpha}{\alpha!}\omega(y)\right)
    u(y)\mathrm{d} y,
\end{equation*}
where $\omega$ is a mollifier
supported in $B(0,1)$ and 
$l\in\mathbb{N}$ depends on $\Omega, T$
and $\omega$ but not on $u$.
It follows that
\begin{equation*}
    \|u\|_{H^1(\Omega)}
\lesssim_{d,\Omega, X}
    \|T_X(u)\|_{L^2(\Omega)}+\|u\|_{L^2(\Omega)}
\end{equation*}
and the result follows by density.
\end{proof}
One can define $N'_p(d, \Omega)$ in the same manner
as $N_p$ was defined from $N$. The same argument
as above then proves that 
$N'_p(d,\Omega)=N_2'(d,\Omega)=2d-1$.
\begin{remark}
    The above results can also be interpreted (and proved) in terms
    of $\mathbb C$-ellipticity, a strengthening of
    the concept of ellipticity. Namely, a differential operator
    of order $k$ of the form
    \begin{equation*}
        T(u)=\sum_{|\alpha|=k}
        T_\alpha \partial_\alpha u
    \end{equation*}
    
    having symbol $T(\xi)=\sum
    T_\alpha \xi^\alpha$ is
    said to be $\mathbb{C}$-elliptic if 
    $\mathrm{ker}(T(\xi))=0$
    for all $\xi\in \mathbb{C}^n\setminus \{0\}$,
    and if it is then the inequality 
    $\|u\|_{W^{k,p}(\Omega)}\lesssim \|u\|_{L^p(\Omega)}+\|T(u)\|_{L^p
    (\Omega)}$ holds
    for suitably well behaved domains. It is not 
    hard to see that, in our context, $T_X$ being $\mathbb{C}$-elliptic
    is equivalent to ${X^\perp}^\mathbb{C}$ not containing 
    any rank-one matrices. The concept of $\mathbb C$-ellipticity
    has been independently discovered many times:
    Smith \cite{smith1961inequalities}
    first obtained a result of this kind, followed by 
    De Figueiredo \cite{DeFigueiredo}, Boman \cite{Boman},
    Kałamajska \cite{Kalamajska} and more recently
    by Breit, Diening, and Gmeineder \cite{Gmeineder}.
    Boman's proof, in particular, implies a strengthening of our 
    theorem to Boman domains, which as is known (\cite{BomanJohn}) coincide
    with John domains. Since Korn's 
    second inequality cannot in general be extended to domains that are not John (\cite{JohnKorn}), this is in a sense the
    largest class possible.
\end{remark}
In particular, we have
\begin{theorem}[Second Korn-Hankel inequality]
    Let $\Omega$ be a Lipschitz domain in $\mathbb{R}^d$. 
   Then for all $p\in(1,\infty)$
   and $u\in W^{1,p}(\Omega, \mathbb{R}^d)$
   we have
    \begin{equation*}
        \|u\|_{W^{1,p}(\Omega)}
\le
	C'(\Omega, p)
	\left(
		\|u\|_{L^p(\Omega)}
	+
		\|P_{\mathcal H(d)}(\nabla u)\|_{L^p(\Omega)}
	\right)
    \end{equation*}
\end{theorem}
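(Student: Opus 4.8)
\emph{Proof proposal.} The plan is to reduce the statement to the $\mathbb C$-ellipticity machinery already deployed in the proof that $N'(d)\le g_{\mathbb C}(d)$ (and recorded in the subsequent remark), applied to the admissible subspace $X=\mathcal H(d)$. Since $\dim_{\mathbb R}\mathcal H(d)=2d-1$, it suffices to verify that the first-order operator $T_{\mathcal H(d)}(u):=P_{\mathcal H(d)}(\nabla u)$ is $\mathbb C$-elliptic. Granting this, Ka{\l}amajska's representation formula (\cite{Kalamajska}, used exactly as in the proof of $N'(d)\le g_{\mathbb C}(d)$) yields $\|u\|_{W^{1,p}(\Omega)}\lesssim_{d,\Omega,p}\|u\|_{L^p(\Omega)}+\|T_{\mathcal H(d)}(u)\|_{L^p(\Omega)}$ for domains star-shaped with respect to a ball, and the standard partition-of-unity and reflection arguments promote this to all bounded Lipschitz $\Omega$, with density of $C^\infty(\overline\Omega,\mathbb R^d)$ in $W^{1,p}(\Omega,\mathbb R^d)$ finishing the proof. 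Alternatively, one may simply quote the already-established identity $N'_p(d,\Omega)=2d-1$ together with the fact, established next, that $\mathcal H(d)$ realises this value.

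By the remark on $\mathbb C$-ellipticity, $T_{\mathcal H(d)}$ is $\mathbb C$-elliptic precisely when the complexified orthogonal complement ${\mathcal H(d)^\perp}^{\,\mathbb C}\subset M_d(\mathbb C)$ contains no nonzero matrix of rank one. This is the same polynomial-multiplication witness used to prove $g_{\mathbb C}(d)\le 2d-1$ (and, over $\mathbb R$, to prove the first Korn-Hankel inequality, Theorem \ref{thm : KornHankel}), now read off the concrete subspace $\mathcal H(d)$: under the identification $\mathbb C^d\simeq \mathbb C_{\le d-1}[X]$, the orthogonal projection $P_{\mathcal H(d)}$ sends $u\otimes v$ to the Hankel matrix whose $k$-th skew-diagonal carries the value $\frac{1}{n_k}\sum_{i+j=k}u_iv_j$, i.e.\ to an invertible rescaling of the coefficient vector of the polynomial product $uv$. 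Hence $\mathcal H(d)^\perp=\ker P_{\mathcal H(d)}$, and, since the complex-linear extension of $P_{\mathcal H(d)}$ is given by the very same formula and $\mathbb C[X]$ is an integral domain, $P_{\mathcal H(d)}(u\otimes v)=0$ forces $u=0$ or $v=0$; thus ${\mathcal H(d)^\perp}^{\,\mathbb C}$ meets the set $\mathcal R_1$ of nonzero rank-one matrices of $M_d(\mathbb C)$ only at $0$, as required.

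The one genuinely delicate point — and the step I would be most careful about — is the bookkeeping around complexification and pairings: one must check that ${\mathcal H(d)^\perp}^{\,\mathbb C}$ really equals the kernel of the complex-linear extension of the real projection $P_{\mathcal H(d)}$, independently of whether the orthogonal complement is taken for the Hermitian Frobenius form or the complex-bilinear form $\mathrm{Tr}(AB^t)$. This is harmless here because $\mathcal H(d)$ has an orthogonal real basis (the normalised single-skew-diagonal Hankel matrices), so complexifying an orthogonal real basis of $M_d(\mathbb R)$ adapted to the splitting $\mathcal H(d)\oplus\mathcal H(d)^\perp$ produces a basis that is simultaneously orthogonal for both forms on $M_d(\mathbb C)$; consequently all the relevant descriptions of the complexified complement coincide and the integral-domain argument applies verbatim. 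Everything else is a direct citation of results already proved, so no further obstacle is expected, and the passage to $p\neq 2$ needs nothing extra since Ka{\l}amajska's formula (and the $N'_p$ identity) is an $L^p$ statement for every $p\in(1,\infty)$.
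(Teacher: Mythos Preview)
Your proposal is correct and matches the paper's approach: the theorem is stated in the paper without a separate proof because it is an immediate application of the preceding $N'(d)\le g_{\mathbb C}(d)$ argument (Ka{\l}amajska's representation formula, or equivalently $\mathbb C$-ellipticity) to the specific subspace $X=\mathcal H(d)$, together with the polynomial-multiplication witness showing that $\mathcal H(d)^\perp$ avoids complex rank-one matrices. Your careful check that the complexification bookkeeping is harmless is more explicit than anything the paper writes, but it is exactly the verification the paper is tacitly relying on.
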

\begin{remark}
	The issue of determining the constants
	associated with the inequality induced by 
	an admissible subspace $X$ is much more subtle
	in this case than it was for $N(d)$: the geometry
	of the domain plays a very important role, as shown in
	 \cite{Lewicka2016OnConditions}. More precisely,
	it is known that for the standard form of Korn's 
    second inequality, 
	the constant is not uniformly bounded with respect to
	(Lipschitz) domains. This is not surprising,
    as one expects for physical reasons that
    the constant should blow up for very thin strips.\par
    It is not clear what geometric quantities
    one should expect $C'(\Omega, X,p)$ to depend on.
\end{remark}
\section{The rectangular case: \texorpdfstring{$N(m,d)$}{N(m,d)}}\label{sec : rect}
The Korn-type inequalities
we have considered so far
have one restriction: namely, they only
apply to vector fields $u\colon \mathbb{R}^d\to \mathbb{R}^d$.
The goal of this section is to extend
our results to vector fields $u\colon \mathbb{R}^m\to \mathbb{R}^d$, or in other words to the spaces
$H^1_{(0)}(\Omega\subset \mathbb{R}^m; \mathbb{R}^d)$.
\begin{definition}[$N(m,d)$]
    We define $N(m,d)$ to be
the smallest integer $k$ for which
there exist linear $\ell_1,\dots, \ell_k$ functionals on $M_{m,d}(\mathbb{R})$
such that
\begin{equation}
	\label{eq : Nmddef}
	\|\nabla u\|_{L^2(\mathbb{R}^m)}^2
\lesssim
	\sum_{i=1}^k \|\ell_i(\nabla u)\|_{L^2(\mathbb{R}^m)}^2
\end{equation}
holds for all $u\in C^\infty_c(\mathbb R^m,\mathbb{R}^d)$.
\end{definition}

One similarly defines $N'(m,d)$, with
the usual caveats on $\Omega$ being a regular-enough domain,
and $n_{\mathbb K}(m,d,k), g_{\mathbb K}(m,d)$. 
The same method we used in the previous sections allows us to 
prove the following theorem
\begin{theorem}
    Let $m,d\in\mathbb N$. Then
    \begin{equation*}
        N(m,d)=md-n_{\mathbb R}(m,d,1)=g_{\mathbb R}(m,d)
    \end{equation*}
    and
    \begin{equation*}
        N'(m,d)=md-n_{\mathbb C}(m,d,1)=
        g_{\mathbb C}(m,d).
    \end{equation*}\end{theorem}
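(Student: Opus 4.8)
The plan is to run, essentially verbatim, the arguments of Sections~\ref{sec : Chipot} and~\ref{sec : second}, replacing $M_d(\mathbb K)$ everywhere by $M_{m,d}(\mathbb K)$. The key point is that none of the machinery used there --- rank-one convexity, laminates, the laminate--martingale correspondence of Section~\ref{sec : calcvar}, Burkholder's inequality, and $\mathbb C$-ellipticity --- uses that the matrices are square; all that matters is the rank-one structure of the ambient matrix space. First I would record the projection reduction exactly as in the proof that $M(d)=N(d)$: orthonormalising the functionals $\ell_i$ shows that $N(m,d)$ equals the least $\dim X$ over subspaces $X\subset M_{m,d}(\mathbb R)$ for which $C\|P_X(\nabla u)\|_{L^2}^2\ge\|Q_X(\nabla u)\|_{L^2}^2$ holds for some $C>0$ and all $u\in C^\infty_c(\mathbb R^m,\mathbb R^d)$, and similarly for $N'(m,d)$ on a fixed Lipschitz domain $\Omega\subset\mathbb R^m$. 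Such an $X$ is admissible for $N(m,d)$ iff the quadratic form $f_{C,X}(A)=C\|P_X(A)\|^2-\|Q_X(A)\|^2$ satisfies $f_{C,X}^{qc}(0)=0$ for some $C$, which for quadratic forms is equivalent to $f_{C,X}^{rc}(0)=0$; Theorem~\ref{thm : mainthm}, whose statement and proof are valid word-for-word for subspaces of $M_{m,d}(\mathbb R)$, then says such a $C$ exists iff $X^\perp\subset M_{m,d}(\mathbb R)$ contains no element of $\mathcal R_1$. Hence $N(m,d)=\min\{\dim X: X^\perp\cap\mathcal R_1=\emptyset\}=md-n_{\mathbb R}(m,d,1)$.

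Next I would identify this with $g_{\mathbb R}(m,d)$ via the folklore bilinear-map correspondence. Under $M_{m,d}(\mathbb R)\simeq\mathbb R^m\otimes\mathbb R^d$, the set $\mathcal R_1$ is exactly the set of nonzero pure tensors $a\otimes b$ with $a\in\mathbb R^m$, $b\in\mathbb R^d$, so a bilinear map $f\colon\mathbb R^m\times\mathbb R^d\to\mathbb R^k$ is nonsingular iff its linearisation $L\colon M_{m,d}(\mathbb R)\to\mathbb R^k$ has $\ker L\cap\mathcal R_1=\emptyset$. Passing to $\ker L$ gives $n_{\mathbb R}(m,d,1)\ge md-g_{\mathbb R}(m,d)$, while the quotient map $M_{m,d}(\mathbb R)\twoheadrightarrow M_{m,d}(\mathbb R)/X$ attached to a maximal $\mathcal R_1$-avoiding subspace $X$ gives $g_{\mathbb R}(m,d)\le md-n_{\mathbb R}(m,d,1)$; together these yield $g_{\mathbb R}(m,d)=md-n_{\mathbb R}(m,d,1)=N(m,d)$. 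Running the identical (field-independent) argument over $\mathbb C$ gives $g_{\mathbb C}(m,d)=md-n_{\mathbb C}(m,d,1)$.

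For $N'(m,d)$ I would transcribe the two halves of the proof in Section~\ref{sec : second}. For the lower bound $N'(m,d)\ge g_{\mathbb C}(m,d)$: if $\dim X<g_{\mathbb C}(m,d)$ then ${X^{\mathbb C}}^\perp$ contains a rank-one matrix, which after relabelling we write as $b\otimes a$ with $b\in\mathbb C^m$, $a\in\mathbb C^d$; for every entire $g\colon\mathbb C^m\to\mathbb C$ the field $u_g(x)=\Re\bigl(g(b\cdot x)\,a\bigr)$ has $\nabla u_g=\Re\bigl(g'(b\cdot x)\,b\otimes a\bigr)$, so $P_X(\nabla u_g)=0$; this produces an infinite-dimensional subspace $S\subset W^{1,2}(\Omega,\mathbb R^d)$ on which $\|\cdot\|_{H^1}$ and $\|\cdot\|_{L^2}$ are equivalent, contradicting compactness of $H^1(\Omega)\hookrightarrow L^2(\Omega)$. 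For the upper bound $N'(m,d)\le g_{\mathbb C}(m,d)$: if $X\subset M_{m,d}(\mathbb C)$ avoids $\mathcal R_1$ then $T_X(u)=P_X(\nabla u)$ is $\mathbb C$-elliptic, its scalar components $(T_X)_{ij}$ satisfy condition~$(C)$ of \cite{Kalamajska}, and the resulting Calderón--Zygmund representation formula yields $\|u\|_{W^{1,p}(\Omega)}\lesssim\|u\|_{L^p(\Omega)}+\|T_X(u)\|_{L^p(\Omega)}$ for $\Omega$ star-shaped with respect to a ball, extended to Lipschitz domains by the usual partition-of-unity and reflection arguments. This gives $N'(m,d)=md-n_{\mathbb C}(m,d,1)=g_{\mathbb C}(m,d)$.

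The work is essentially bookkeeping, and I expect no genuine obstacle: the only points requiring real care are checking that Theorem~\ref{thm : mainthm} and Burkholder's inequality are formulated for arbitrary Hilbert-space-valued (hence $M_{m,d}(\mathbb R)$-valued) martingales and laminates --- which they are --- and that the rank-one matrices in $M_{m,d}$ behave exactly as in the square case, in particular $\mathcal R_1=\{a\otimes b:a\in\mathbb R^m,\ b\in\mathbb R^d\}\setminus\{0\}$ and $\mathcal R_1$ is invariant under the identifications used. Of the two directions for $N'(m,d)$, the $\mathbb C$-ellipticity one is the most delicate, since it relies on the regularity of $\Omega$ (Korn's second inequality can fail on bad domains), but this is precisely the situation already handled in Section~\ref{sec : second}.
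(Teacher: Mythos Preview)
Your proposal is correct and follows exactly the approach the paper indicates: the paper itself gives no detailed proof of this theorem, merely stating that ``the same method we used in the previous sections allows us to prove the following theorem,'' and you have faithfully carried out that transcription from $M_d(\mathbb K)$ to $M_{m,d}(\mathbb K)$. Your checklist of the only nontrivial verifications (Hilbert-space-valued Burkholder, rank-one structure in rectangular matrices, and the domain-dependent $\mathbb C$-ellipticity step) is accurate and complete.
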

\begin{remark}
This also provides an easy proof
that both $N(m,d)$ and $N'(m,d)$
are symmetric with respect to $m$ and $d$ (since $g_{\mathbb K}$ is)
which, while not surprising
(especially in light of the easily
provable fact that $N(d,1)=N(1,d)=d$),
    has proved rather elusive to 
    prove directly.
\end{remark}
The same argument as in 
Proposition \ref{prop : gclosed} 
proves
\begin{equation*}
    N'(m,d)=m+d-1.
\end{equation*}
Moreover, we have the following
bounds:
\begin{theorem}
Let $m,d\in\mathbb{N}$. Then
\begin{equation*}
    \max\{d,m\}\le m\circ d\le N(m,d)\le
    N'(m,d).
\end{equation*}    
\end{theorem}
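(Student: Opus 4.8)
The plan is to prove the four-term chain by splitting it into its three links and handling them in order of increasing difficulty, relying on the identities $N(m,d)=g_{\mathbb R}(m,d)$ and $N'(m,d)=g_{\mathbb C}(m,d)$ established above. The first link, $\max\{m,d\}\le m\circ d$, is purely combinatorial: $m\circ d$ is, by the binomial theorem, the least integer $t$ with $(X+Y)^t\in(X^m,Y^d)$ in $\mathbb Z_2[X,Y]$, equivalently the least $t$ such that $\binom{t}{j}\equiv 0\pmod 2$ for every $j$ with $t-d<j<m$. If $t<m$ then $j=t$ lies in that range (since $d\ge 1$), yet $\binom{t}{t}=1$ is odd; hence $m\circ d\ge m$, and since swapping $j\leftrightarrow t-j$ shows $m\circ d=d\circ m$, also $m\circ d\ge d$.

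For the second link, $m\circ d\le N(m,d)$, it suffices, since $N(m,d)=g_{\mathbb R}(m,d)$, to show that any nonsingular bilinear map $f\colon\mathbb R^m\times\mathbb R^d\to\mathbb R^k$ forces $k\ge m\circ d$; this is the rectangular version of the cohomological argument in the proof of Theorem \ref{thm : lowerbound2}. Nonsingularity makes $f$ descend to a continuous map $\tilde f\colon\mathbb R\mathbb P^{m-1}\times\mathbb R\mathbb P^{d-1}\to\mathbb R\mathbb P^{k-1}$, and since $f(\cdot,b_0)$ and $f(a_0,\cdot)$ are injective linear maps for $a_0,b_0\neq 0$ we also get $m,d\le k$. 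The restriction of $\tilde f$ to $\mathbb R\mathbb P^{m-1}\times\{\ast\}$ is induced by an injective linear map $\mathbb R^m\to\mathbb R^k$, hence coincides up to a linear automorphism of $\mathbb R^k$ with the standard inclusion and pulls the degree-one generator back to the generator of $H^1(\mathbb R\mathbb P^{m-1};\mathbb Z_2)$, and likewise on the other factor. By the $\mathbb Z_2$-Künneth theorem $H^*(\mathbb R\mathbb P^{m-1}\times\mathbb R\mathbb P^{d-1};\mathbb Z_2)\cong\mathbb Z_2[X,Y]/(X^m,Y^d)$, so the previous sentence forces $\tilde f^*(Z)=X+Y$; as $Z^k=0$ already in $H^*(\mathbb R\mathbb P^{k-1};\mathbb Z_2)\cong\mathbb Z_2[Z]/(Z^k)$, applying $\tilde f^*$ gives $(X+Y)^k\in(X^m,Y^d)$, i.e. $k\ge m\circ d$. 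Taking the infimum over all such $f$ yields $g_{\mathbb R}(m,d)\ge m\circ d$.

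For the third link, $N(m,d)\le N'(m,d)$, I would show that every real subspace $X\subset M_{m,d}(\mathbb R)$ admissible for $N'$ is already admissible for $N$. The rectangular analogue of the argument in section \ref{sec : second} shows that if the second Korn-type inequality holds for $X$ then $(X^{\mathbb C})^\perp$ contains no nonzero complex rank-one matrix; since $(X^{\mathbb C})^\perp=(X^\perp)^{\mathbb C}\supseteq X^\perp$ and every nonzero real rank-one matrix is a nonzero complex rank-one matrix, it follows that $X^\perp\cap\mathcal R_1=\emptyset$, so $X$ is admissible for $N$. As $N(m,d)$ and $N'(m,d)$ are the minima of $\dim_{\mathbb R}X$ over these two nested classes of subspaces, the inequality follows. (Alternatively, the same conclusion follows from the elementary dilation argument: testing the $N'$-inequality on rescalings $u_\lambda(x)=u(\lambda x)$ of a fixed $u\in C^\infty_c$ supported in a small ball of $\Omega$ and letting $\lambda\to\infty$ kills the lower-order term $\|u\|_{L^2}$.)

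The step I expect to require the most care is the second one, and specifically the identification $\tilde f^*(Z)=X+Y$: one must check that the restriction of $\tilde f$ to each $\mathbb R\mathbb P$-factor is nontrivial on degree-one $\mathbb Z_2$-cohomology --- this is precisely where nonsingularity, via linear injectivity of $f(\cdot,b_0)$ and $f(a_0,\cdot)$, enters --- and that the target of $\tilde f^*$ is read off correctly via the $\mathbb Z_2$-Künneth theorem together with the ring structure $H^*(\mathbb R\mathbb P^{n-1};\mathbb Z_2)\cong\mathbb Z_2[X]/(X^n)$. The first and third links are essentially bookkeeping by comparison.
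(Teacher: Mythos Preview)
Your proposal is correct and matches the paper's sketch on the two substantive links: the cohomological argument for $m\circ d\le N(m,d)$ is exactly the rectangular adaptation of Theorem~\ref{thm : lowerbound2} that the paper invokes, and your combinatorial check that $\max\{m,d\}\le m\circ d$ is what the paper dismisses as ``standard algebraic considerations.'' For the rightmost inequality the paper works at the level of $g$, invoking the inclusion $\mathbb{R}\hookrightarrow\mathbb{C}$ to get $g_{\mathbb{R}}\le g_{\mathbb{C}}$ in one line, whereas you argue at the level of admissible subspaces (complex rank-one avoidance implies real rank-one avoidance) or via dilation; these are different packagings of the same trivial fact, and your version is the more transparent of the two.
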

\begin{proof}[Sketch]
    It is clear by the polynomial multiplication bound
    that 
    $g_{\mathbb{R}}\le g_{\mathbb{C}},$ so the rightmost
    upper bound follows. To prove
    that $N(m,d)\ge m\circ d,$
    we adapt the proof of Theorem \ref{thm : lowerbound2}: a nonsingular bilinear map $f\colon\mathbb{R}^m\times\mathbb{R}^d\to \mathbb{R}^{g(m,d)}$
    implies the existence of a
    morphism $\tilde f$
    \begin{equation*}
        \tilde f\colon
        \mathbb Z_2[Z]/(Z^{g(m,d)})\to \mathbb Z_2[X]/(X^m)
        \otimes \mathbb Z_2[Y]/(Y^d)
    \end{equation*}
    satisfying $\tilde f(Z)=X+Y,$
    hence
    \begin{equation*}
        (X+Y)^{g(m,d)}\in (X^m,Y^d),
    \end{equation*}
    i.e. $g(m,d)\ge m\circ d$. 
    The left-most lower bound follows
    by standard algebraic considerations.
\end{proof}
Following the
proof of Proposition \ref{prop : bindisj}, we can also explicitly
characterise the values of $m,d$ for which the equality $N(m,d)=N'(m,d)$ holds:
\begin{proposition}
    Let $m,d\in \mathbb{N}$. Then $N(m,d)=N'(m,d)=m+d-1$ if and only if
    $m-1,d-1$ are dyadically disjoint,
    i.e. $\{i\colon (m-1)_i=1\}\cap \{i\colon  (d-1)_i=1\}=\emptyset.$
\end{proposition}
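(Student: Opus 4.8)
The plan is to reduce the claim to a combinatorial statement about the Hopf--Stiefel function $m\circ d$ and then match it against the known characterisation of when $m\circ d=m+d-1$. By the theorem identifying $N(m,d)=g_{\mathbb R}(m,d)$ and the bound $m\circ d\le N(m,d)\le N'(m,d)=m+d-1$ from the two preceding results, we always have
\begin{equation*}
	m\circ d\le N(m,d)\le N'(m,d)=m+d-1,
\end{equation*}
so the equality $N(m,d)=N'(m,d)=m+d-1$ is \emph{equivalent} to $m\circ d=m+d-1$ together with $g_{\mathbb R}(m,d)=m+d-1$. The first step, therefore, is to argue that $m\circ d=m+d-1$ already \emph{forces} $g_{\mathbb R}(m,d)=m+d-1$: since $m\circ d$ is a lower bound for $g_{\mathbb R}(m,d)$ and $m+d-1$ is an upper bound (polynomial multiplication $\mathbb R_{m-1}[X]\times\mathbb R_{d-1}[X]\to\mathbb R_{m+d-2}[X]$, exactly as in the proof of $N(d)\le 2d-1$), the sandwich collapses. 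So the whole proposition reduces to: $m\circ d=m+d-1$ if and only if $m-1$ and $d-1$ are dyadically disjoint.

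Next I would recall the binomial characterisation of $m\circ d$ already used in the excerpt (in the proof of Theorem \ref{thm : lowerbound2} and in Proposition \ref{prop : bindisj}): $m\circ d$ is the least $t$ such that $(X+Y)^{t}\in (X^m,Y^d)$ in $\mathbb Z_2[X,Y]$, equivalently the least $t$ with $\binom{t}{j}\equiv 0\pmod 2$ for all $j$ with $t-d<j<m$ (i.e. for all $j$ with $j\le m-1$ and $t-j\le d-1$). The value $t=m+d-1$ is always admissible here, because for $j\le m-1$ and $(m+d-1)-j\le d-1$ one needs $j=m-1$, and then $\binom{m+d-1}{m-1}$ need not vanish --- so in fact one has to be slightly careful: the constraint range $t-d<j<m$ at $t=m+d-1$ is $m-1<j<m$, which is empty, so $t=m+d-1$ is \emph{vacuously} admissible and hence $m\circ d\le m+d-1$ always, consistent with the upper bound. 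The content is therefore entirely in deciding whether some strictly smaller $t$ works, and I would show $m\circ d\ge m+d-1$ exactly when $m-1,d-1$ are dyadically disjoint by analysing $t=m+d-2$: one must check whether $\binom{m+d-2}{j}\equiv 0$ for all $j$ in the range $m-2<j<m$, i.e. for $j=m-1$, so the question is precisely whether $\binom{m+d-2}{m-1}$ is odd. By Kummer's theorem (or Lucas), $\binom{m+d-2}{m-1}=\binom{(m-1)+(d-1)}{m-1}$ is odd iff there are no carries when adding $m-1$ and $d-1$ in base $2$, i.e. iff $m-1$ and $d-1$ are dyadically disjoint. Thus if they are \emph{not} dyadically disjoint then $t=m+d-2$ (or some smaller $t$ --- and a short monotonicity/induction argument handles intermediate $t$) already witnesses $m\circ d\le m+d-2<m+d-1$; while if they \emph{are} dyadically disjoint, then I must also rule out every $t<m+d-1$, not just $t=m+d-2$.

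The main obstacle is exactly that last point: showing that dyadic disjointness of $m-1$ and $d-1$ forces $m\circ d\ge m+d-1$, i.e. that \emph{no} $t\le m+d-2$ is admissible. For a general $t<m+d-1$ the constraint range $t-d<j<m$ (equivalently $\max(0,t-d+1)\le j\le m-1$ and $j\le t$, automatically) is nonempty, and one must exhibit at least one $j$ in that range with $\binom{t}{j}$ odd. The natural move is to pick $j=m-1$ (when $t-d+1\le m-1$, i.e. $t\le m+d-2$, which holds) and to invoke Kummer again: $\binom{t}{m-1}$ is odd iff $m-1$ and $t-(m-1)$ add without carry in base $2$; one then argues, using that $m-1\preceq m+d-2$ bitwise (dyadic disjointness gives $(m-1)+(d-1)=(m-1)\,\mathrm{OR}\,(d-1)$, so $m-1$ is a submask of $m+d-2$) together with monotonicity of the admissibility condition in $t$, that for every $t$ in $\{m,\dots,m+d-2\}$ there is such a binomial-odd $j$ in range --- equivalently, that $(X+Y)^{t}\notin(X^m,Y^{d})$ for $t<m+d-1$. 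I would organise this via the ideal-membership formulation: $(X+Y)^{m+d-2}=\sum_j\binom{m+d-2}{j}X^{j}Y^{m+d-2-j}$, and the monomial $X^{m-1}Y^{d-1}$ appears with coefficient $\binom{m+d-2}{m-1}$, which is odd under dyadic disjointness, and $X^{m-1}Y^{d-1}\notin(X^m,Y^d)$; the same monomial-extraction argument applied to $(X+Y)^t$ for smaller $t$ (tracking a suitable submask $j$ of the binary digits) closes the case. Assembling these pieces --- the sandwich collapse, the $t=m+d-2$ dichotomy via Kummer, and the lower-bound argument under disjointness via monomial extraction in $\mathbb Z_2[X,Y]$ --- gives the stated characterisation.
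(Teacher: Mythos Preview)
Your argument correctly handles one direction but has a genuine gap in the other. The ``if'' direction (dyadic disjointness of $m-1,d-1$ implies $N(m,d)=m+d-1$) is fine: you show via Kummer/Lucas that $\binom{m+d-2}{m-1}$ is odd, hence $m\circ d=m+d-1$, and the sandwich $m\circ d\le N(m,d)\le m+d-1$ collapses. This matches the paper's argument for that direction.

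The gap is in the ``only if'' direction. You argue that if $m-1,d-1$ are \emph{not} dyadically disjoint then $m\circ d\le m+d-2$, and then conclude that the proposition ``reduces'' to characterising when $m\circ d=m+d-1$. But this is a non sequitur: $m\circ d$ is only a \emph{lower bound} for $g_{\mathbb R}(m,d)=N(m,d)$, and the inequality $m\circ d\le g_{\mathbb R}(m,d)$ can be strict (indeed it is not known whether they ever differ in the square case, and one certainly cannot assume they agree). So from $m\circ d<m+d-1$ you cannot deduce $N(m,d)<m+d-1$; the topological obstruction dropping does not by itself produce a nonsingular bilinear map into $\mathbb R^{m+d-2}$. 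Your sentence ``the equality $N(m,d)=m+d-1$ is equivalent to $m\circ d=m+d-1$ together with $g_{\mathbb R}(m,d)=m+d-1$'' already encodes this error: the second condition is the whole content, and you never establish that the first is necessary for it.

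The paper closes this gap not by topology but by \emph{construction}: it invokes Lam's theorem (Theorem~5.5 of \cite{lam1968construction}) to produce, whenever $m-1$ and $d-1$ share a binary digit, an explicit nonsingular bilinear map $\mathbb R^{m}\times\mathbb R^{d}\to\mathbb R^{m+d-2}$ (via the function $\tau(m+d-2,m-1)>0$), directly giving $g_{\mathbb R}(m,d)\le m+d-2$. You need some such constructive input; the Hopf--Stiefel analysis alone cannot supply it.
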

\begin{proof}
    As in Proposition \ref{prop : bindisj}, we employ Theorem 5.5
    of \cite{lam1968construction}
    to obtain the existence
    of a nonsingular bilinear map
    \begin{equation*}
        f\colon \mathbb{R}^m\times \mathbb{R}^{d-1+\tau(m+d-2,m-1)}\to \mathbb{R}^{m+d-2},
    \end{equation*}
    where
    \begin{equation*}
        \tau(m+d-2,m-1)=\mathrm{Card}
        \left\{i\colon  (d-1)_i=0\ \mathrm{and}\ (m+d-2)_i\neq (m-1)_i\right\}.
    \end{equation*}
    It follows that, if $m-1,d-1$
    are not dyadically disjoint then $\tau>0$. On the other hand, if
    they are dyadically disjoint then
    $m\circ d=m+d-1$, and since $m\circ d\le g(m,d)\le m+d-1$, the result follows.
\end{proof}

\par
As in section \ref{sec : hardertopalg},
we can use more
involved topological arguments to prove stronger lower bounds, e.g.\footnote{We make no claim 
of sharpness for this lower bound, which we
present only as an example of the bounds that can be attained by using more topological tools.}
\begin{equation*}
    g_{\mathbb R}(m,d)\ge \mathrm{min}\left(\mathrm{Imm}(\mathbb{R}\mathbb P^{m-1}), \mathrm{Imm}(\mathbb{R}\mathbb P^{d-1})\right)+1.
\end{equation*}
For more on bounds on $g_{\mathbb R}(m,d)$ (and many explicit
values of it), we refer the reader
to \cite{DominguezLam2021} and references
therein. \par
Finally, we mention that the 
 Korn-Hankel inequalities hold in this context 
 as well, with virtually no modifications
 necessary to either the argument or the formulation of the inequalities. Moreover,
 by using the same trick of reframing
 the matter as one concerning the norm
 of polynomials, one can prove
 \begin{equation*}
      C(\mathcal H(m,d),2)
      =\frac1{\mu_{m+d,d}^{\mathbb R}}\mathcal O(dm),
 \end{equation*}
 where $\mu_{m+d,d}^{\mathbb R}$ is defined
 as
 \begin{equation*}
     \mu_{m+d,d}^{\mathbb R}
     =\inf\left\{\frac{\|pq\|_{L^\infty(\mathbb{T})}}{\|p\|_{L^\infty(\mathbb{T})}\|q\|_{L^\infty(\mathbb{T})}}\colon  
     p\in \mathbb{R}_{m-1}[X], q\in \mathbb{R}_{d-1}[X]\right\}.
 \end{equation*}
 Bounds similar (albeit more complicated
 to state) to the ones obtained
 in the case $d=m$ can then be proved by 
 taking advantage of the known 
 properties of $\mu$; we refer the 
 interested reader to \cite{burger}
 for details.
 \section{A quantitative proof of Ornstein's non-inequality}\label{sec : Ornstein}
 In \cite{Ornstein}, Ornstein proved the following result:
 \begin{theorem}
 Let $L$ be a homogeneous differential operator of order $1$ with 
 constant coefficients, i.e. $L(u)=T(\nabla u)$ for all $u\in C^\infty_c(\mathbb{R}^d, \mathbb{R}^m)$, with $T\in \mathrm{Lin}(M_{m,d}(\mathbb{R}), \mathbb{R}^k)$. Then there exists a constant $C$ such that
 for all $u\in C^{\infty}_c(\mathbb{R}^d, \mathbb{R}^m)$
 \begin{equation*}
     \|\nabla u\|_{L^{1}(\mathbb{R}^d)}\le C \|L(u)\|_{L^1(\mathbb{R}^d)}
 \end{equation*}
 if and only if $T$ is injective.
 \end{theorem}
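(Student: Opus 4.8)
The plan is to prove the equivalence by dispatching the easy implication directly and the hard one in contrapositive form. If $T$ is injective then $\|T(A)\|\gtrsim\|A\|$ for every matrix $A$, so $\|T(\nabla u)(x)\|\gtrsim\|\nabla u(x)\|$ pointwise and the inequality follows by integrating over $\mathbb R^d$; the work is therefore to show that if $\ker T\neq\{0\}$ then for every constant $C$ there is $u\in C^\infty_c(\mathbb R^d,\mathbb R^m)$ with $\|\nabla u\|_{L^1}>C\|L(u)\|_{L^1}$. First I would reduce to a projection: set $X:=(\ker T)^\perp$, so that $Q_X(\nabla u)$ takes values in $X^\perp=\ker T$; hence $T(\nabla u)=T(P_X(\nabla u))$, giving $\|L(u)\|_{L^1}\le\|T\|_{\mathrm{op}}\,\|P_X(\nabla u)\|_{L^1}$, while $\|\nabla u\|_{L^1}\ge\|Q_X(\nabla u)\|_{L^1}$. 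It then suffices to prove that, whenever $X^\perp\neq\{0\}$, the inequality $\|Q_X(\nabla u)\|_{L^1(\mathbb R^d)}\le C\,\|P_X(\nabla u)\|_{L^1(\mathbb R^d)}$ fails for every $C$.

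Next I would recast this through envelopes. Let $f_C:=f_{X,1,C}$ be the positively $1$-homogeneous function of \eqref{eq : shape}, namely $f_C(A)=C\|P_X(A)\|-\|Q_X(A)\|$. By Dacorogna's formula \eqref{eq : qcenv} and a standard localisation-and-density argument, for a fixed $C$ the inequality above holds for all admissible $u$ if and only if $f_C^{qc}(0)=0$, and by homogeneity $f_C^{qc}(0)\in\{0,-\infty\}$. Since $f_C^{qc}\le f_C^{rc}$, it suffices to show $f_C^{rc}(0)=-\infty$ for every $C$; by \eqref{eq : rcenv} this amounts to producing, for each $C$, a finite-order laminate $\mu$ with barycentre $0$ such that $\langle f_C,\mu\rangle<0$, equivalently a finite laminate along which $\int\|Q_X(A)\|\,\mathrm d\mu(A)$ is arbitrarily large relative to $\int\|P_X(A)\|\,\mathrm d\mu(A)$. (One may instead bypass the envelope and realise such laminates directly as homogeneous gradient Young measures, producing the bad $u$ explicitly.)

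These laminates I would build via the martingale dictionary of Section~\ref{sec : calcvar}, mimicking the proof of Theorem~\ref{thm : sharp}. If $X^\perp$ contains a rank-one matrix $R$, then $\tfrac12\delta_{tR}+\tfrac12\delta_{-tR}$ is a laminate with barycentre $0$ and $\langle f_C,\mu\rangle=-t\|R\|\to-\infty$. Otherwise $X^\perp$ is a non-trivial subspace avoiding rank-one, and I would fix matrices $A\in X$ and $B\in X^\perp$ with $\mathrm{rank}(A+B)=\mathrm{rank}(A-B)=1$ and set $M_n=Af_n+Bg_n$, with $f_n$ a dyadic martingale and $g_n$ a predictable $\pm1$-transform of $f_n$. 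Then $\mathrm{rank}(\mathrm dM_n)\le1$, so $M_n$ is the martingale of a finite laminate $\mu_n$ with barycentre $0$, and since $Q_X(A)=0=P_X(B)$ one has $\|Q_X(M_n)\|_{L^1}/\|P_X(M_n)\|_{L^1}=(\|B\|/\|A\|)\,\|g_n\|_{L^1}/\|f_n\|_{L^1}$. The classical unboundedness of $\pm1$ martingale transforms on $L^1$ --- the degenerate endpoint $p^*-1=\infty$ of Burkholder's Theorem~\ref{thm : burk} --- provides $f_n,g_n$ with $\|g_n\|_{L^1}/\|f_n\|_{L^1}\to\infty$, hence $\langle f_C,\mu_n\rangle\to-\infty$ for every $C$. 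This handles every $X$ in $\mathbb R^{2\times2}$ (when $X^\perp\neq\{0\}$ avoids rank-one it consists of rank-$2$ matrices, and a short computation --- seeking $A$ with $\det A=-\det B$ orthogonal to $B$ for the bilinear form of the signature-$(2,2)$ quadratic form $\det$ --- yields the required pair) and, in every dimension $d\ge2$, Korn's operator $\mathrm{Sym}$ and $\mathrm{Sym}_0$, for which $B$ is a rotation generator of a coordinate $2$-plane and $A$ the matching $\mathrm{diag}(1,-1)$-type block inside it.

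The main obstacle is the higher-dimensional regime in which $X^\perp$ avoids rank-one but carries no single rank-one-compatible pair $A\in X$, $B\in X^\perp$, so the two-generator martingale of Theorem~\ref{thm : sharp} is unavailable. I would resolve it either by replacing that martingale with a staircase construction in the spirit of Faraco (see \cite{Faraco2003}) --- an iterated martingale climbing, through rank-one increments, towards arbitrarily large multiples of a fixed $B\in X^\perp$, the cancellations among its mean-zero increments keeping $\|P_X(M_n)\|_{L^1}$ bounded --- or, more cheaply, via the theorem of Kirchheim and Kristensen that a positively $1$-homogeneous rank-one convex function is convex: were $f_C^{rc}$ real-valued it would be such a function, hence convex, hence would possess an affine minorant, which would also minorise $f_C$; but $f_C$ has no affine minorant once $X^\perp\neq\{0\}$ (compare its values at $A$ and $-A\in X^\perp$), so $f_C^{rc}\equiv-\infty$. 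Making the staircase quantitative, ideally matching the conjectured blow-up $c(p^*-1)$ of the sharp $L^p$ constant as $p\to1^+$, is the delicate point, whereas the $\mathbb R^{2\times2}$ case and the Korn-type operators go through cleanly with the explicit two-generator martingale above.
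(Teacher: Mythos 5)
Your easy direction, the reduction to the projection inequality for $X=(\ker T)^\perp$, and the passage to $f_C^{rc}(0)=-\infty$ via laminates are all fine, and your two-generator martingale is the same mechanism the paper uses. But note where you diverge from the paper's actual construction: you require an \emph{orthogonal} pair $A\in X$, $B\in X^\perp$ with $\mathrm{rank}(A\pm B)=1$, imported from the hypotheses of Theorem \ref{thm : sharp}, and this is unnecessarily restrictive. The paper's proof in Section \ref{sec : Ornstein} never asks for orthogonality: it takes any rank-two element $A=u\otimes v+w\otimes z\in\ker T$, sets $B=u\otimes v-w\otimes z$ (so $A\pm B$ are rank one and $B\notin\ker T$), and replaces the orthogonal projection $Q_X$ by a possibly non-orthogonal projection $P$ onto $\ker T$ with $P(B)=0$; for the martingale $M_n=g_nA+f_nB$ one then has $\|T(M_n)\|_{L^1}=\|f_n\|_{L^1}\,\|T(B)\|$ bounded while $\|M_n\|_{L^1}\gtrsim\|P(M_n)\|_{L^1}=\|g_n\|_{L^1}\,\|A\|\to\infty$. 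This covers every $T$ whose kernel contains a nonzero matrix of rank at most two (hence all non-injective $T$ on $\mathbb{R}^{2\times2}$, and $\mathrm{Sym}$, $\mathrm{Sym}_0$) with no orthogonality condition, so much of the ``main obstacle'' you describe is an artefact of your orthogonal set-up rather than of the method.

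The genuine gap is in the remaining regime, where every nonzero element of $\ker T$ has rank at least three, and neither of your two options closes it as written. The staircase route is only a sketch, with the crucial cancellation estimate not carried out. The ``cheap'' route misstates Kirchheim--Kristensen: positively $1$-homogeneous rank-one convex functions are \emph{not} convex in general (M\"uller constructed $1$-homogeneous quasiconvex, hence rank-one convex, non-convex examples); their theorem gives convexity at points of the rank-one cone, i.e.\ an affine minorant of $f_C^{rc}$ touching it at a rank-one matrix. That weaker statement does rescue your contradiction, since any affine minorant of $f_C$ is impossible once $X^\perp\neq\{0\}$ by your evaluation along $\pm tB$, but then the hard direction is simply delegated to \cite{KirchheimJan}, which is precisely one of the pre-existing proofs of Ornstein that the paper cites as prior work. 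Be aware that the paper itself does not reprove the full statement: it quotes it from \cite{Ornstein}, and its own contribution is the quantitative laminate construction in the rank-$\le 2$ case described above. So either carry out the staircase argument in detail, or state Kirchheim--Kristensen correctly and cite it explicitly as the imported ingredient for the general case.
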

 To be precise, Ornstein proved this result for higher orders as well, but here we focus
 on operators of order $1$. This theorem has a clear connection
 with calculus of variations, for it can be equivalently stated in terms
 of quasiconvexity: namely, let $f_{T,C}(A):=C\|T(A)\|-\|A\|$. Then there exists $C$ large enough such that $f^{qc}_{C,T}(0)=0$ if and only if $T$
 is injective (and a similar framing can be obtained for the higher-order version of the theorem as well). This connection has been taken advantage of
 several times to obtain proofs of the theorem via calculus of variations:
 in particular Kirchheim and Kristensen (\cite[Theorem 1.3]{KirchheimJan})
 obtained a far reaching generalisation of Ornstein's result, and Faraco
 and Guerra (\cite{Faraco2022}) used a similar method to obtain
 a very efficient proof of Ornstein's result (for operators of order $1$ and $2$)
 in $\mathbb{R}^{2\times 2}.$ Both proofs, however, are qualitative,
 in that they do not provide witnesses to the failure of the inequality.
 In that direction, Conti, Faraco and Maggi constructed
 in \cite{Conti2005AFunctions}
 an explicit family of laminates which proves the failure of the inequality
 for Korn's operator $P_{\mathrm{Sym}}$. In this section, we extend their
 result by using our method to construct such a family for any operator
 $L$ which contains a matrix of rank $2$ in its kernel; moreover, we provide a quantitative bound on the failure of the inequality, showing that the
 necessary constant blows up like $p^*-1$. In a different quantitative
 direction, we mention the work \cite{Kazaniecki2023}, where Riesz products are used
 to construct functions that witness the failure of the $L^1$ inequality.
 \par To prove our result, we notice that
 the method applied in Theorem \ref{thm : sharp}
 of embedding scalar martingales can be 
 used in a much more general setting:
 \begin{theorem}
    Let $L$ be a homogeneous differential operator of order $1$ with 
 constant coefficients, i.e. $L(u)=T(\nabla u)$ for all $u\in C^\infty_c(\mathbb{R}^d, \mathbb{R}^m)$, with $T\in \mathrm{Lin}(M_{m,d}(\mathbb{R}), \mathbb{R}^k)$. If there exists a non-zero matrix of rank at most $2$
     in $\mathrm{ker}(T)$, then there is no
     constant $C$ such that for all $u\in C^\infty_c(\mathbb{R}^d,\mathbb{R}^m)$
     \begin{equation*}
         \|\nabla u\|_{L^1(\mathbb R^d)}\le C\|Lu\|_{L^1(\mathbb R^d)}.
     \end{equation*}
     Moreover, the minimal constant $C_p$ such that
     \begin{equation*}
         f_p(A):=C_p^p\|T(A)\|^p-\|A\|^p
     \end{equation*}
     is rank-one convex (which is finite if and only if $\mathrm{ker}(T)$ does not contain rank-one matrices) grows at least like $p^*-1$
     as $p\to 1$ or $p\to \infty$.
 \end{theorem}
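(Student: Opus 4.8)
The plan is to run the martingale‑embedding device from the proof of Theorem \ref{thm : sharp} in this wider setting. Write $f_{p,C}(A):=C^p\|T(A)\|^p-\|A\|^p$. I would first dispose of the degenerate case: if $\ker(T)$ contains a rank‑one matrix $R$, then along the rank‑one line $t\mapsto tR$ one has $f_{p,C}(tR)=-\|R\|^p|t|^p$ for every $C$, which is concave and not affine, so no finite $C$ makes the rank‑one convex envelope of $f_{p,C}$ real‑valued, i.e. $C_p=\infty$; moreover the one‑dimensional plane‑wave argument already used in the proof of Theorem \ref{thm : mainthm} shows that $\|\nabla u\|_{L^q}/\|Lu\|_{L^q}$ is unbounded for every $q\in[1,\infty)$, in particular for $q=1$. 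Hence from now on I assume $\ker(T)$ contains no rank‑one matrix but does contain a matrix of rank exactly two, call it $R$.

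Then I would fix the geometric data. Choose a singular value decomposition $R=P_1+P_2$ with $P_1,P_2$ non‑zero, rank‑one and Frobenius‑orthogonal, and set $A:=\tfrac12R\in\ker(T)$, $B:=\tfrac12(P_1-P_2)$, so $A+B=P_1$ and $A-B=P_2$ both have rank one. Two non‑degeneracy facts hold, and each is precisely where the absence of rank‑one matrices in $\ker(T)$ enters: $T(B)\neq0$ (else $2P_1=R+(P_1-P_2)$ would be a rank‑one element of $\ker(T)$), and $\kappa:=\|P_{B^{\perp}}(A)\|>0$ (else $A\in\mathrm{span}(B)$, impossible since $P_1,P_2$ are linearly independent). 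Now, given any $\pm1$‑transform pair $(f_n,g_n)$ of dyadic martingales with $f_0=g_0=0$, form $M_n:=Af_n+Bg_n$: since $\mathrm{d}M_i$ is at each node a scalar multiple of $A+B$ or of $A-B$ it has rank at most one, so the law $\mu_n$ of $M_n$ is a finite‑order laminate with barycentre $0$, and one reads off directly $T(M_n)=T(B)\,g_n$ and $\|M_n\|\ge\|P_{B^{\perp}}(M_n)\|=\kappa|f_n|$.

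With this in hand the three claims are quick. \emph{Finiteness and upper bound.} Let $c:=\inf\{\|T(R)\|:\mathrm{rank}(R)\le1,\ \|R\|=1\}$, strictly positive exactly because $\ker(T)$ meets no rank‑one matrix (infimum of a positive continuous function over a compact set). For any dyadic martingale $M$ with rank‑one increments and $M_0=0$, the martingale $N_n:=T(M_n)$ also starts at $0$ and satisfies $\|\mathrm{d}N_i\|=\|T(\mathrm{d}M_i)\|\ge c\|\mathrm{d}M_i\|$, so $M$ is differentially subordinate to $c^{-1}N$ and Burkholder's inequality (Theorem \ref{thm : burk}) gives $\|M_n\|_{L^p}\le\tfrac{p^*-1}{c}\|T(M_n)\|_{L^p}$, i.e. $\langle f_{p,(p^*-1)/c},\mu\rangle\ge0$ for every barycentre‑$0$ laminate $\mu$; hence $C_p\le\tfrac{p^*-1}{c}<\infty$. \emph{Lower bound.} If $C$ satisfies $f^{rc}_{p,C}(0)=0$ then testing against $\mu_n$ forces $C^p\|T(B)\|^p\|g_n\|_{L^p}^p\ge\mathbb E\|M_n\|^p\ge\kappa^p\|f_n\|_{L^p}^p$, so $C\ge\tfrac{\kappa}{\|T(B)\|}\cdot\tfrac{\|f_n\|_{L^p}}{\|g_n\|_{L^p}}$; taking the supremum over $\pm1$‑transform pairs starting at the origin and invoking the sharpness clause of Theorem \ref{thm : burk} together with the symmetry of the $\pm1$‑transform relation (so that this supremum equals $p^*-1$, as in the proof of Theorem \ref{thm : sharp}) gives $C_p\ge\tfrac{\kappa}{\|T(B)\|}(p^*-1)$, a fixed positive multiple of $p^*-1$ depending only on $T$; since $p^*\to\infty$ as $p\to1$ and as $p\to\infty$, this is the asserted growth.

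\emph{The $L^1$ non‑inequality.} Fix $C$ and invoke the classical fact that the dyadic martingale transform is unbounded on $L^1$ (only weak type $(1,1)$; see \cite{Burkholder}, \cite{osekowski}): there exist $\pm1$‑transform pairs $(f_N,g_N)$ with $f_0=g_0=0$, $\|g_N\|_{L^1}=1$ and $\|f_N\|_{L^1}\to\infty$. For the associated laminates $\mu_N$ (barycentre $0$) one computes $\langle C\|T(\cdot)\|-\|\cdot\|,\mu_N\rangle\le C\|T(B)\|-\kappa\|f_N\|_{L^1}\to-\infty$, so the rank‑one convex envelope of $C\|T(\cdot)\|-\|\cdot\|$ is $-\infty$ at the origin; since $f^{qc}\le f^{rc}$, so is its quasiconvex envelope, and by Dacorogna's formula \eqref{eq : qcenv} together with density of $C^\infty_c$ this rules out $\|\nabla u\|_{L^1}\le C\|Lu\|_{L^1}$; as $C$ is arbitrary, no such constant exists, and $\mu_N$ is an explicit quantitative witness. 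The only real friction I anticipate is bookkeeping rather than substance: arranging the rank‑one decomposition so that $T(B)\neq0$ and $\kappa>0$ (done above), and citing precisely the two classical inputs — $L^1$‑unboundedness of the martingale transform and sharpness of Burkholder's constant $p^*-1$ among $\pm1$‑transform pairs normalised to start at the origin. Throughout, ``$f_p$ is rank‑one convex'' is read, consistently with Theorems \ref{thm : mainthm} and \ref{thm : sharp}, as ``$f_p$ has a real‑valued rank‑one convex envelope'', i.e. $f^{rc}_{p,C_p}(0)=0$.
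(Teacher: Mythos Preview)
Your proposal is correct and follows essentially the same approach as the paper: reduce to the case where $\ker(T)$ contains a rank-two matrix but no rank-one matrix, decompose that matrix as a sum of two rank-one pieces to obtain $A\in\ker(T)$ and $B\notin\ker(T)$ with $A\pm B$ rank one, and then embed scalar $\pm1$-transform martingale pairs via $M_n=Af_n+Bg_n$ to produce laminates witnessing the bounds (invoking Burkholder's sharpness for the $p^*-1$ growth and $L^1$-unboundedness of the martingale transform for the non-inequality). The one technical variation is that the paper extracts the lower bound on $\|M_n\|$ by introducing a (not necessarily orthogonal) projection $P$ onto $\ker(T)$ with $P(B)=0$, whereas you use the orthogonal projection $P_{B^\perp}$ directly; your route is slightly more transparent, and indeed the paper itself remarks afterwards that one can ``avoid the use of the projection $P$ (instead considering $M_n$ in its entirety directly)''.
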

 \begin{remark}
     If $d=2$, the theorem applies to all operators
     $T(\nabla u)$ where $T$ is not injective, proving
     the full Ornstein result. It also applies to $L=P_{\mathrm{Sym}},
     P_{\mathrm{Sym}_0}$.
     It is not difficult
     to see that the same method can be applied to the
     more general case of
     $\|T(\nabla u)\|_{L^1(\mathbb{R}^d)}\gtrsim \|S(\nabla u)\|_{L^1(\mathbb{R}^d)}.$
 \end{remark}
 \begin{proof}
 If $\mathrm{ker}(T)$ contains an element of rank $1$,
 the result is trivially true, so let us assume that
 there exists $A\in \mathrm{ker}(T)$ having rank $2$
 and that $\mathrm{ker}(T)\cap \mathcal R_1=\emptyset$.
 We can write $A=u\otimes v+w\otimes z$
 for some vectors $u,v,w,z$. Define $B:=u\otimes v-w\otimes z$. It is clear that $\mathrm{rank}(A-B)=
 \mathrm{rank}(A+B)=1$ and that $B\not \in\mathrm{ker}(T),$ for if it did then $ A-B$ would belong to $\mathrm{ker}(T)$,
 a contradiction. We can now apply the same martingale
 construction as in the proof of Theorem \ref{thm : sharp}: namely, let us construct
 first a (not necessarily orthogonal) projection $P$
 onto $\ker(T)$
 such that $P(B)=0$, and define the function
 \begin{equation*}
     f(A)=c_p^p\|T(A)\|^p-\|P(A)\|^p.
 \end{equation*}
 By considering the laminate associated with the
 martingale $M_n=g_nA+f_nB$, where $g_n$ is a $\pm1$ transform of $f_n$ and both start at $0$, the result follows. 
 \end{proof}
 \begin{remark}
     This proof can be seen as an extension of
     the laminate approach that Conti, Faraco and Maggi
     developed in \cite{Conti2005AFunctions}.
     Indeed, specialising the proof to $L=\mathrm{Sym}$
     leads to a family of laminates with the same support:
     the difference between the two constructions is then essentially one of choosing $f_n$.
 \end{remark}
If one is only interested in proving the failure
 of the inequality, then it is not necessary to utilise Burkholder's result or its sharpness: it suffices to construct
 two dyadic martingales $f_n, g_n$ such that $\mathrm{d}g_n=\varepsilon_n \mathrm{d}f_n$
 and $\sup \|f_n\|_{L^1(\mathbb P)}<\infty$ while $\|g_n\|_{L^1(\mathbb P)}\to \infty.$ Such an example can be found in \cite[Section 3.2.2]{osekowski}. We construct
 here an alternative example: 
 let $\Omega=[0,1]$, $\mathcal F_n$
 denote the dyadic filtration and construct $f_n$ via $f_0=0,$
 $\mathrm{d}f_n=2^{n-1}(2\chi_{I_n}-\chi_{I_{n-1}})$, where $I_n=[0,2^{-n}]$.
For $g_n$, we simply impose $\mathrm{d}g_n=(-1)^{n-1}\mathrm{d}f_n.$
 While the $L^1$
 norm of $f_n$ remains bounded (indeed one easily sees that $\|f_n\|_{L^1(\mathbb P)}\le 2$), $\|g_n\|_{L^1}$ grows 
 linearly: to see this let $T_k=I_k^c\cap I_{k-1}$
\begin{equation*}
    {g_n}_{|T_k}=\sum_{i=1}^{k-1} (-1)^{i-1}2^{i-1}+(-1)^k 2^{k-1}=(-1)^k\frac{2^{k+1}+(-1)^k}{3},
\end{equation*}
hence, since all the $T_k$ are disjoint, it follows that
\begin{equation*}
    \|g_n\|_{L^1}\ge \sum_{k\le n} \frac{2^{k+1}+(-1)^k}{3} 2^{-k}
    =\frac {2n}3+\mathcal O(1).
\end{equation*}
\begin{remark}
The following probabilistic interpretation of the above is possible:
consider a game played, at each turn, by tossing a coin.
The martingale $f_n$ denotes the payoff obtained by playing
$n$ turns with the following strategy:
betting each turn that the toss will result in heads,
doubling the bet each turn,
until the first tail is tossed,
at which point one stops playing.
$g_n$ is then the payoff obtained by a similar strategy,
where one doubles the bet at each turn and stops
once tails appears for the first time,
but instead of betting on heads each turn, alternates:
first heads, then tails, then heads etc.
\end{remark}
In \cite{Geiss} it was proved that for any homogeneous even
multiplier $T_m$ of order zero, one has
\begin{equation*}
\|T_m\|_{L^p(\mathbb{R}^d)\to L^p(\mathbb{R}^d)}\gtrsim p^*-1.
\end{equation*}
A similar result, under some additional
hypotheses, was proved for matrix-valued
multipliers in \cite[Theorem 14]{boros2011}.
These results, however, concern
themselves with the norm of $T^{-1}$ or, 
in other words, with 
$C^{\mathrm{qc}}(L,p)$, proving $C^{\mathrm{qc}}(L,p)
\gtrsim p^*-1$. Our result, on the other hand, deals with 
$p^*-1\lesssim C^{\mathrm{rc}}(L,p)\le C^{\mathrm{qc}}(L,p).$\par
Finally, we remark that it is possible to avoid the use
of the projection $P$ (instead considering $M_n$ in its entirety 
directly), and this approach
would probably allow for a (slightly) more accurate
estimate of $C^{\mathrm{rc}}(L,p)$. However, the 
complexity of the Burkholder side of the problem increases very 
significantly, as shown in \cite{Ivanisvili2015InequalityTransform}.
\section{Conclusion}
There are several interesting questions that we
have not managed to answer. 
The most natural one, finding
a closed form for $g_{\mathbb R}(d)$,
seems to be mainly topological in nature. We point out that even the
upper bounds are still getting improved,
see \cite{DominguezLam2021}.
Similarly, determining the constants associated
to the second form of Korn-type inequalities is
mostly a geometrical question,
and its answer is unknown even for the standard 
form of Korn's inequality.
Some questions that seem more approachable are:
\begin{enumerate}
    \item
    Is there an $X_d$ of dimension $= N(d)+\mathcal O(1)$
    such that 
    $C(X_d)\approx \sqrt d?$
    If not, what is the best possible rate of growth in terms of 
    $d$? Is it exponential,
    as suggested by the example $C(\mathcal H(d))?$
    \item What more can be said about $C(X,p)?$
    For example, under which conditions 
    is it true that $C(X,p)=C(X,2)(p^*-1)$?
    As we mentioned before, this problem 
    is connected to determining whether
    certain rank-one convex 
    functions are quasiconvex
    (and as such, it is an interesting question in and of 
    itself, even ignoring
    the applications that
    obtaining a sharp constant for the 
    inequalities might have).
    \item As we have seen, there
    exist subspaces $X$ of dimension 
    less than or equal to $\lceil \frac{d^2}2\rceil+1$ 
    for which $C(X,2)=1.$ A somewhat
    explicit description of these
    spaces would be interesting, 
    since at the moment the smallest subspace satisfying $C(X,2)=1$
    that can be explicitly described
    is, to our knowledge, $\mathrm{Sym}_0.$
\end{enumerate}
Let us finally mention a natural direction to explore:
extending the previous results to problems
concerning higher derivatives; we will return to this
question in future work.
\section*{Acknowledgements}
The author wishes to thank Prof. Petrovi\'{c}
for his comments and insights on the 
topic of bilinear nonsingular maps,
Prof. Kristensen for
the many insightful conversations and 
comments, and for pointing out 
Boman's papers (\cite{Boman1967PartialSpaces} in particular) to us and
 the fact that Theorem \ref{thm : solchipot} was already
known in the calculus of variations literature
and Prof. Spector for bringing \cite{DeFigueiredo} 
to our attention. The author is also grateful 
to the anonymous reviewers for their thoughtful comments.\par
The author acknowledges the financial support of 
the Mathematical Institute of the University of Oxford.\par
For the purpose of open access, the author has applied a CC BY public copyright licence to any author accepted manuscript arising from this submission.
\bibliographystyle{elsarticle-harv}   
\bibliography{minimal_korn}                 

@article{AstalaIwaniecPrauseSaksman2015,
  author    = {Astala, Kari and Iwaniec, Tadeusz and Prause, Istv{\'a}n and Saksman, Eero},
  title = {{A hunt for sharp $L^p$-estimates and rank-one convex variational integrals}},
  journal = {{Filomat}},
  volume    = {29},
  number    = {2},
  pages     = {245--261},
  year      = {2015}
}

@article{Conti2005AFunctions,
  author    = {Conti, Sergio and Faraco, Daniel and Maggi, Francesco},
  title = {{A new approach to counterexamples to $L^1$ estimates: Korn's inequality, geometric rigidity, and regularity for gradients of separately convex functions}},
  journal = {{Arch. Ration. Mech. Anal.}},
  volume    = {175},
  number    = {2},
  pages     = {287--300},
  year      = {2005}
}

@article{Ornstein,
  author    = {Ornstein, Donald},
  title = {{A non-inequality for differential operators in the $L_1$ norm}},
  journal = {{Arch. Ration. Mech. Anal.}},
  volume    = {11},
  number    = {1},
  pages     = {40--49},
  year      = {1962}
}

@article{Dasgupta,
  author    = {Dasgupta, Sanjoy and Gupta, Anupam},
  title = {{An elementary proof of a theorem of Johnson and Lindenstrauss}},
  journal = {{Random Struct. Algorithms}},
  volume    = {22},
  number    = {1},
  pages     = {60--65},
  year      = {2003}
}

@article{Guerra2022,
  author    = {Guerra, Andr{\'e} and Kristensen, Jan},
  title = {{Automatic Quasiconvexity of Homogeneous Isotropic Rank-One Convex Integrands}},
  journal = {{Arch. Ration. Mech. Anal.}},
  volume    = {245},
  number    = {1},
  pages     = {479--500},
  year      = {2022}
}

@article{Aumann1986Bi-convexityBi-martingales,
  author    = {Aumann, Robert J. and Hart, Sergiu},
  title = {{Bi-convexity and bi-martingales}},
  journal = {{Israel J. Math.}},
  volume    = {54},
  number    = {2},
  pages     = {159--180},
  year      = {1986}
}

@article{Burkholder,
  author    = {Burkholder, Donald L.},
  title = {{Boundary Value Problems and Sharp Inequalities for Martingale Transforms}},
  journal = {{Ann. Probab.}},
  volume    = {12},
  number    = {3},
  pages     = {647--702},
  year      = {1984}
}

@article{Astala2012BurkholderMappings,
  author    = {Astala, Kari and Iwaniec, Tadeusz and Prause, Istv{\'a}n and Saksman, Eero},
  title = {{Burkholder integrals, Morrey's problem and quasiconformal mappings}},
  journal = {{J. Amer. Math. Soc.}},
  volume    = {25},
  number    = {2},
  pages     = {507--531},
  year      = {2012}
}

@article{Kinderlehrer1991CharacterizationsGradients,
  author    = {Kinderlehrer, David and Pedregal, Pablo},
  title = {{Characterizations of Young measures generated by gradients}},
  journal = {{Arch. Ration. Mech. Anal.}},
  volume    = {115},
  number    = {4},
  pages     = {329--365},
  year      = {1991}
}

@article{lam1968construction,
  author    = {Lam, Kee Yuen},
  title = {{Construction of Some Nonsingular Bilinear Maps}},
  journal = {{Bol. Soc. Mat. Mex.}},
  volume    = {13},
  number    = {},
  pages     = {88--94},
  year      = {1968}
}

@article{Klartag2005,
  author    = {Klartag, B. and Mendelson, Shabar},
  title = {{Empirical processes and random projections}},
  journal = {{J. Funct. Anal.}},
  volume    = {225},
  number    = {1},
  pages     = {229--245},
  year      = {2005}
}

@article{Guerra2019,
   author = {Guerra, Andr{\'e}},
   issue = {6},
   journal = {{Calc. Var. Partial Differential Equations}},
   month = {11},
   numpages = {19},
   publisher = {Springer},
   title = {{Extremal rank-one convex integrands and a conjecture of {\v{S}}ver{\'a}k}},
   volume = {58},
   eid={201},
   year = {2019}
}

@article{Grabovsky2017,
  author    = {Grabovsky, Yury},
  title = {{From Microstructure-Independent Formulas for Composite Materials to Rank-One Convex, Non-quasiconvex Functions}},
  journal = {{Arch. Ration. Mech. Anal.}},
  volume    = {227},
  number    = {2},
  pages     = {607--636},
  year      = {2018}
}

@article{smith1961inequalities,
  author    = {Smith, K. T.},
  title = {{Inequalities for formally positive integro-differential forms}},
  journal = {{Bull. Amer. Math. Soc.}},
  volume    = {67},
  number    = {4},
  pages     = {368--370},
  year      = {1961}
}

@article{Ivanisvili2015InequalityTransform,
  author    = {Ivanisvili, Paata},
  title = {{Inequality for Burkholder's martingale transform}},
  journal = {{Anal. PDE}},
  volume    = {8},
  number    = {4},
  pages     = {765--806},
  year      = {2015}
}

@article{PoinKorn,
  author    = {Di Fratta, Giovanni and Solombrino, Francesco},
  title = {{Korn and Poincar\'e--Korn inequalities: a different perspective}},
  journal = {{Proc. Amer. Math. Soc.}},
  volume    = {153},
  number    = {1},
  pages     = {143--159},
  year      = {2025}
}

@article{JohnKorn,
  author    = {Jiang, R. and Kauranen, A.},
  title     = {{Korn's inequality and John domains}},
  journal   = {{Calc. Var. Partial Differ. Equ.}},
  volume    = {56},
  number    = {4},
  eid = {109},
  numpages  = {18},
  year      = {2017}
}

@article{Pedregal1993LaminatesMicrostructure,
  author    = {Pedregal, Pablo},
  title = {{Laminates and microstructure}},
  journal = {{European J. Appl. Math.}},
  volume    = {4},
  number    = {2},
  pages     = {121--149},
  year      = {1993}
}

@article{Boros2013,
  author    = {Boros, Nicholas and Sz{\'e}kelyhidi Jr., L{\'a}szl{\'o} and Volberg, Alexander},
  title = {{Laminates meet Burkholder functions}},
  journal = {{J. Math. Pures Appl.}},
  volume    = {100},
  number    = {5},
  pages     = {687--700},
  year      = {2013}
}

@article{KariAstala2024,
  author    = {Astala, Kari and Faraco, Daniel and Guerra, Andr{\'e} and Koski, Aleksis and Kristensen, Jan},
  title = {{Lower semicontinuity, Stoilow factorization and principal maps}},
  journal = {{Commun. Pure Appl. Anal.}},
  volume    = {23},
  number    = {10},
  pages     = {1608--1645},
  year      = {2024}
}

@article{Faraco2003,
  author    = {Faraco, Daniel},
  title = {{Milton's conjecture on the regularity of solutions to isotropic equations}},
  journal = {{Ann. Inst. H. Poincar{\'e} C Anal. Non Lin{\'e}aire}},
  volume    = {20},
  number    = {5},
  pages     = {889--909},
  year      = {2003}
}

@article{burger,
  author    = {B{\"u}nger, Florian},
  title = {{Minimizing and maximizing the Euclidean norm of the product of two polynomials}},
  journal = {{Adv. Comput. Math.}},
  volume    = {35},
  number    = {2},
  pages     = {193--215},
  year      = {2011}
}

@article{AsteyDavis1980,
  author    = {Astey, Luis and Davis, Donald M.},
  title = {{Nonimmersions of real projective spaces implied by BP}},
  journal = {{Bol. Soc. Mat. Mex.}},
  volume    = {25},
  number    = {1},
  pages     = {15--22},
  year      = {1980}
}

@article{Iwaniec2002Nonlinear,
  author    = {Iwaniec, Tadeusz},
  title = {{Nonlinear Cauchy-Riemann operators in $\mathbb R^n$}},
  journal = {{Trans. Amer. Math. Soc.}},
  volume    = {354},
  number    = {5},
  pages     = {1961--1995},
  year      = {2002}
}

@article{DominguezLam2021,
  author    = {Dom{\'\i}nguez, Carlos and Lam, Kee Yuen},
  title = {{Nonsingular Bilinear Maps Revisited}},
  journal = {{Proc. R. Soc. Edinb., Sect. A, Math.}},
  volume    = {151},
  number    = {1},
  pages     = {377--390},
  year      = {2021}
}

@article{Eliahou2005OldFunctions,
  author    = {Eliahou, Shalom and Kervaire, Michel},
  title = {{Old and new formulas for the Hopf-Stiefel and related functions}},
  journal = {{Expo. Math.}},
  volume    = {23},
  number    = {2},
  pages     = {127--145},
  year      = {2005}
}

@article{Kazaniecki2023,
  author    = {Kazaniecki, Krystian and Wojciechowski, Micha{\l}},
  title = {{On Bernstein type quantitative estimates for Ornstein non-inequalities}},
  journal = {{Rev. Mat. Iberoam.}},
  volume    = {40},
  number    = {3},
  pages     = {901--912},
  year      = {2024}
}

@article{Chen2017OnIntegrals,
  author    = {Chen, Chuei Yee and Kristensen, Jan},
  title = {{On coercive variational integrals}},
  journal = {{Nonlinear Anal.}},
  volume    = {153},
  number    = {},
  pages     = {213--229},
  year      = {2017}
}

@article{Chipot2021OnType,
  author    = {Chipot, Michel},
  title = {{On Inequalities of Korn's type}},
  journal = {{J. Math. Pures Appl.}},
  volume    = {148},
  number    = {},
  pages     = {199--220},
  year      = {2021}
}

@article{Iwaniec1982OnExponents,
  author    = {Iwaniec, Tadeusz},
  title = {{On $L^p$-integrability in PDE's and quasiregular mappings for large exponents}},
  journal = {{Ann. Acad. Sci. Fenn. Math.}},
  volume    = {7},
  number    = {2},
  pages     = {301--322},
  year      = {1982}
}

@article{Adem,
  author    = {Adem, Jos{\'e}},
  title = {{On nonsingular bilinear maps II}},
  journal = {{Bol. Soc. Mat. Mex.}},
  volume    = {16},
  number    = {2},
  pages     = {64--70},
  year      = {1971}
}

@article{KirchheimJan,
  author    = {Kirchheim, Bernd and Kristensen, Jan},
  title = {{On rank-one convex functions that are homogeneous of degree one}},
  journal = {{Arch. Ration. Mech. Anal.}},
  volume    = {221},
  number    = {1},
  pages     = {527--558},
  year      = {2016}
}

@article{Geiss,
  author    = {Geiss, Stefan and Montgomery-Smith, Stephen and Saksman, Eero},
  title = {{On singular integral and martingale transforms}},
  journal = {{Trans. Amer. Math. Soc.}},
  volume    = {362},
  number    = {2},
  pages     = {553--575},
  year      = {2010}
}

@article{Lewicka2016OnConditions,
  author    = {Lewicka, Marta and M{\"{u}}ller, Stefan},
  title = {{On the optimal constants in Korn's and geometric rigidity estimates, in bounded and unbounded domains, under Neumann boundary conditions}},
  journal = {{Indiana Univ. Math. J.}},
  volume    = {65},
  number    = {2},
  pages     = {377--397},
  year      = {2016}
}

@article{Bott,
  author    = {Bott, Raoul and Milnor, John},
  title = {{On the parallelizability of the spheres}},
  journal = {{Bull. Amer. Math. Soc.}},
  volume    = {64},
  number    = {3},
  pages     = {87--89},
  year      = {1958}
}

@article{Gmeineder,
  author    = {Breit, Dominic and Diening, Lars and Gmeineder, Franz},
  title = {{On the trace operator for functions of bounded $\mathbb{A}$-variation}},
  journal = {{Anal. PDE}},
  volume    = {13},
  number    = {2},
  pages     = {559--594},
  year      = {2020}
}

@article{Boman1967PartialSpaces,
  author    = {Boman, Jan},
  title = {{Partial regularity of mappings between Euclidean spaces}},
  journal = {{Acta Math.}},
  volume    = {119},
  number    = {1},
  pages     = {1--25},
  year      = {1967}
}

@article{Kalamajska,
  author    = {Ka{\l}amajska, Agnieszka},
  title = {{Pointwise multiplicative inequalities and Nirenberg type estimates in weighted Sobolev spaces}},
  journal = {{Studia Math.}},
  volume    = {108},
  number    = {3},
  pages     = {275--290},
  year      = {1994}
}

@article{Sverak1992Rank-oneQuasiconvexity,
  author    = {{\v{S}}ver{\'a}k, Vladim{\'i}r},
  title = {{Rank-one convexity does not imply quasiconvexity}},
  journal = {{Proc. R. Soc. Edinb., Sect. A, Math.}},
  volume    = {120},
  number    = {1-2},
  pages     = {185--189},
  year      = {1992}
}

@article{Muller1999Rank-oneMatrices,
  author    = {M{\"{u}}ller, Stefan},
  title = {{Rank-one Convexity Implies Quasiconvexity on Diagonal Matrices}},
  journal = {{Int. Math. Res. Not.}},
  volume    = {1999},
  number    = {20},
  pages     = {1087--1095},
  year      = {1999}
}

@article{Faraco2022,
  author    = {Faraco, Daniel and Guerra, Andr{\'e}},
  title = {{Remarks on Ornstein's Non-Inequality in {$\mathbb{R}^{2\times 2}$}}},
  journal = {{Q. J. Math.}},
  volume    = {73},
  number    = {1},
  pages     = {17--21},
  year      = {2022}
}

@article{Faraco2008,
  author    = {Faraco, Daniel and Sz{\'e}kelyhidi Jr., L{\'a}szl{\'o}},
  title = {{Tartar's conjecture and localization of the quasiconvex hull in $ \mathbb{R}^{2 \times 2} $}},
  journal = {{Acta Math.}},
  volume    = {200},
  number    = {2},
  pages     = {279--305},
  year      = {2008}
}

@article{Banuelos,
  author    = {Ba\~{n}uelos, Rodrigo},
  title = {{The foundational inequalities of D. L. Burkholder and some of their ramifications}},
  journal = {{Ill. J. Math.}},
  volume    = {54},
  number    = {3},
  pages     = {789--868},
  year      = {2010}
}

@article{Harris2018,
   author = {Harris, Terence L. J. and Kirchheim, Bernd and Lin, Chun-Chi},
   issue = {3},
   journal = {{Calc. Var. Partial Differential Equations}},
   month = {4},
   numpages = {12},
   publisher = {Springer},
   title = {{Two-by-two upper triangular matrices and Morrey's conjecture}},
   volume = {57},
   eid={73},
   year = {2018}
}

@article{Boyd,
  author    = {Boyd, David W.},
  title = {{Two sharp inequalities for the norm of a factor of a polynomial}},
  journal = {{Mathematika}},
  volume    = {39},
  number    = {2},
  pages     = {341--349},
  year      = {1992}
}

@book{hartshorne1977ag,
  title = {{Algebraic Geometry}},
  year      = {1977},
  author    = {Hartshorne, Robin},
  publisher = {Springer},
  series = {{Graduate Texts in Mathematics, 52}}
}

@book{hatcher2002algebraic,
  title = {{Algebraic Topology}},
  year      = {2002},
  author    = {Hatcher, Allen},
  publisher = {Cambridge University Press}
}

@book{Rindler,
  title = {{Calculus of Variations}},
  year      = {2018},
  author    = {Rindler, Filip},
  publisher = {Springer},
  series = {{Universitext}}
}

@book{Milnor,
  title = {{Characteristic Classes}},
  year      = {1974},
  author    = {Milnor, John W. and Stasheff, James D.},
  publisher = {Princeton University Press},
  series = {{Annals of Mathematics Studies, 76}}
}

@book{shapiro,
  title = {{Compositions of Quadratic Forms}},
  year      = {2000},
  author    = {Shapiro, Daniel B.},
  publisher = {Walter de Gruyter},
  series = {{De Gruyter Expositions in Mathematics, 33}}
}

@book{Dacorogna2008DirectVariations,
  title = {{Direct methods in the Calculus of Variations}},
  year      = {2008},
  author    = {Dacorogna, Bernard},
  publisher = {Springer},
  series = {{Applied Mathematical Sciences, 78}}
}

@book{Vershynin2018,
  title = {{High-Dimensional Probability: An Introduction with Applications in Data Science}},
  year      = {2018},
  author    = {Vershynin, Roman},
  publisher = {Cambridge University Press},
  series = {{Cambridge Series in Statistical and Probabilistic Mathematics, 47}}
}

@book{Dacorogna2004IntroductionVariations,
  title = {{Introduction to the Calculus of Variations}},
  year      = {2008},
  author    = {Dacorogna, Bernard},
  publisher = {Imperial College Press},
  edition   = {2}
}

@book{Eaton2023,
  title = {{Multivariate statistics: A vector space approach}},
  year      = {1983},
  author    = {Eaton, Morris L.},
  publisher = {John Wiley \& Sons, Inc.},
  series = {{Wiley Series in Probability and Mathematical Statistics}}
}

@book{osekowski,
  title = {{Sharp Martingale and Semimartingale inequalities}},
  year      = {2012},
  author    = {Os\k{e}kowski, Adam},
  publisher = {Springer},
  series = {{IMPAN Monogr. Mat. (N. S.), 72}}
}

@unpublished{Cassese,
  author     = {Cassese, Gabriele},
  Title = {{Korn's inequality from the viewpoint of calculus of variations}},
  Note       = {arXiv:2603.22431},
  year       = {2026}
}

@inproceedings{BomanJohn,
  author     = {S. Buckley and P. Koskela and G. Lu},
  booktitle  = {{16th
{R}olf {N}evanlinna {C}olloquium}},
  title = {{Boman equals {J}ohn}},
  year       = 1996,
  pages      = {91--99}
}

@incollection{Schaftingen,
  author     = {Van Schaftingen, Jean},
  title = {{{I}njective {E}llipticity, {C}ancelling {O}perators, and {E}ndpoint {G}agliardo--{N}irenberg--{S}obolev {I}nequalities for {V}ector {F}ields}},
  booktitle = {{{G}eometric and {A}nalytic {A}spects of {F}unctional {V}ariational {P}rinciples}},
  editor     = {Cianchi, Andrea and Maz'ya, Vladimir and Weth, Tobias},
  series = {{Lecture Notes in Mathematics}},
  volume     = {2348},
  pages      = {259--317},
  year       = {2024},
  publisher  = {Springer}
}

@techreport{Boman,
  author     = {Jan Boman},
  title = {{\texorpdfstring{$L_p$}{Lp}-estimates for very strongly elliptic systems}},
  institution= {University of Stockholm, Department of Mathematics},
  volume     = {29},
  year       = {1982}
}

@inproceedings{Petrovic,
  author     = {Petrovi{\'c}, Zoran},
  title = {{{N}onsingular {B}ilinear {M}aps, {S}paces of {M}atrices, {I}mmersions and {E}mbeddings}},
  booktitle = {{Proceedings of the Conference "Contemporary Geometry and Related Topics"}},
  editor     = {Neda Bokan and Mirjana Djori{\'c} and Anatoly T. Fomenko and Zoran Raki{\'c} and Bernd Wegner and Julius Wess},
  address    = {Belgrade},
  publisher  = {Faculty of Mathematics, University of Belgrade},
  year       = {2006},
  pages      = {427--435}
}

@InProceedings{Gordon,
  author     = "Gordon, Y.",
  editor     = "Lindenstrauss, Joram
and Milman, Vitali D.",
  title = {{On {M}ilman's inequality and random subspaces which escape through a mesh in {$\mathbb R^n$}}},
  booktitle = {{Geometric Aspects of Functional Analysis}},
  year       = "1988",
  publisher  = "Springer Berlin Heidelberg",
  address    = "Berlin, Heidelberg",
  pages      = "84--106"
}

@misc{boros2011,
  title = {{Sharp Lower bound estimates for vector-valued and matrix-valued multipliers in {$L^p$}}},
  author     = {Boros, Nicholas and Volberg, Alexander},
  year       = {2011},
  eprint     = {1110.5405}
}

@misc{BaernsteinII1997,
  author     = {Baernstein, Al and Montgomery-Smith, Stephen J.},
  month      = {9},
  title = {{Some conjectures about integral means of $\partial f$ and $\bar{\partial} f$}},
  year       = {1997},
  eprint     = {math/9709215}
}

@misc{davis_immtable,
  author     = {Davis, Donald M},
  year       = {2016},
  title = {{Table of Immersions and Embeddings of Real Projective Spaces}},
  url        = {https://www.lehigh.edu/~dmd1/immtable},
  note       = {{O}nline list of immersion, nonimmersion, embedding, and nonembedding results for \texorpdfstring{$\mathbb{RP}^n$}{RPn}. Accessed 2025-09-17.},
  institution= {Lehigh University}
}

@unpublished{Astala2023TheTheory,
  Author     = {Astala, Kari and Faraco, Daniel and Guerra, Andr{\'e} and Koski, Aleksis and Kristensen, Jan},
  Title = {{The lo\-cal {B}urkholder functional, quasiconvexity and {G}eometric {F}unction {T}heory}},
  Note       = {arXiv:2309.03495},
  Year       = {2023}
}

@incollection{RumpSekigawa2010,
  author     = {Rump, Siegfried M. and Sekigawa, Hiroshi},
  title = {{{T}he {R}atio {B}etween the {T}oeplitz and the {U}nstructured {C}ondition {N}umber}},
  booktitle = {{Numerical Methods for Structured Matrices and Applications}},
  editor     = {Bini, Dario Andrea and Mehrmann, Volker and Olshevsky, Vadim and Tyrtyshnikov, Eugene E. and van Barel, Marc},
  series = {{Operator Theory: Advances and Applications}},
  volume     = {199},
  pages      = {397--419},
  year       = {2010},
  publisher  = {Birkh{\"a}user Basel}
}

@Incollection{Muller1999VariationalTransitions,
  author     = "M{\"u}ller, Stefan",
  title = {{Variational models for microstructure and phase transitions}},
  booktitle = {{Calculus of Variations and Geometric Evolution Problems}},
  year       = "1999",
  pages      = "85--210"
}

@article{DeFigueiredo,
author = {De Figueiredo, Djairo Guedes},
title = {{The Coerciveness Problem for Forms Over Vector Valued Functions}},
journal = {Comm. Pure Appl. Math.},
volume = {16},
number = {1},
pages = {63-94},
year = {1963}
}
\end{document}